\newtheorem{thm}{Theorem}[section]
\newtheorem{cor}[thm]{Corollary}
\newtheorem{lem}[thm]{Lemma}
\newtheorem{prop}[thm]{Proposition}
\newtheorem{prob}[thm]{Question}
\theoremstyle{definition}
\newtheorem{defn}[thm]{Definition}
\theoremstyle{remark}
\newtheorem{rem}[thm]{Remark}
\newtheorem{ex}[thm]{Example}
\renewcommand{\d }{{\rm d} }
\newcommand{\dzh }{\d_{Z \sqcup\mathcal H}\, }
\newcommand{\Hl }{\{ H_\lambda \} _{\lambda \in \Lambda } }
\newcommand{\e }{\varepsilon }
\renewcommand{\kappa }{\varkappa}
\begin{document}

\title{Acylindrical group actions on quasi-trees }
\author{S. Balasubramanya}
\date{}
\maketitle

\abstract A group G is acylindrically hyperbolic if it admits a non-elementary acylindrical action on a hyperbolic space. We prove that every acylindrically hyperbolic group $G$ has a generating set $X$ such that the corresponding Cayley graph $\Gamma$ is a (non-elementary) quasi-tree and the action of $G$ on $\Gamma$ is acylindrical. Our proof utilizes the notions of hyperbolically embedded subgroups and projection complexes. As an application, we obtain some new results about hyperbolically embedded subgroups and quasi-convex subgroups of acylindrically hyperbolic groups. 

\section{Introduction}
Recall that an isometric action of a group $G$ on a metric space $(S,\d)$ is {\it acylindrical} if for every $\e>0$ there exist $R,N>0$
such that for every two points $x,y$ with $\d(x,y)\ge R$, there are at most $N$ elements $g\in G$ satisfying
$$
\d(x,gx)\le \e \;\;\; {\rm and}\;\;\; \d(y,gy) \le \e.
$$
Obvious examples are provided by geometric (i.e., proper and cobounded) actions; note, however, that acylindricity is a much weaker condition.

A group $G$ is called \emph{acylindrically hyperbolic} if it admits a non-elementary acylindrical action on a hyperbolic space. Over the last few years, the class of acylindrically hyperbolic groups has received considerable attention. It is broad enough to include many examples of interest, e.g., non-elementary hyperbolic and relatively hyperbolic groups, all but finitely many mapping class groups of punctured closed surfaces, $Out(F_n)$ for $n\ge 2$, most $3$-manifold groups, and finitely presented groups of deficiency at least $2$. On the other hand, the existence of a non-elementary acylindrical action on a hyperbolic space is a rather strong assumption, which allows one to prove non-trivial results.  In particular, acylindrically hyperbolic groups share many interesting properties with non-elementary hyperbolic and relatively hyperbolic groups. For details we refer to \cite{DGO,MO,ah,Osi15} and references therein.

The main goal of this paper is to answer the following.

\begin{prob}\label{q}
Which groups admit non-elementary cobounded acylindrical actions on quasi-trees?
\end{prob}

In this paper, by a quasi-tree we mean a connected graph which is quasi-isometric to a tree. Quasi-trees form a very particular subclass of the class of all hyperbolic spaces. From the asymptotic point of view, quasi-trees are exactly ``1-dimensional hyperbolic spaces".  

The motivation behind our question comes from the following observation. If instead of cobounded acylindrical actions we consider cobounded proper (i.e., geometric) ones, then there is a crucial difference between the groups acting on hyperbolic spaces and quasi-trees. Indeed a group $G$ acts geometrically on a hyperbolic space if and only if $G$ is a hyperbolic group. On the other hand, Stallings theorem on groups with infinitely many ends and Dunwoodys accessibility theorem implies that groups admitting geometric actions on quasi-trees are exactly virtually free groups. Yet another related observation is that acylindrical actions on unbounded locally finite graphs are necessarily proper. Thus if we restrict to quasi-trees of bounded valence in Question \ref{q}, we again obtain the class of virtually free groups. Other known examples of groups having non-elementary, acylindrical and cobounded actions on quasi-trees include groups associated  with special cube complexes and right angled artin groups (see \cite{BHS}, \cite{Hagen}, \cite{KK}). 

Thus one could expect that the answer to Question \ref{q} would produce a proper subclass of the class of all acylindrically hyperbolic groups, which generalizes virtually free groups in the same sense as acylindrically hyperbolic groups generalize hyperbolic groups. Our main result shows that this does not happen.

\begin{thm}\label{main}
Every acylindrically hyperbolic group admits a non-elementary cobounded acylindrical action on a quasi-tree.
\end{thm}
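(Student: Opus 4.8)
The plan is to combine Osin's characterization of acylindrically hyperbolic groups via hyperbolically embedded subgroups with the projection complex and quasi-tree-of-metric-spaces machinery of Bestvina--Bromberg--Fujiwara. Since $G$ is acylindrically hyperbolic it is not virtually cyclic and, by Osin \cite{Osi15}, it contains a generalized loxodromic element $g$ whose maximal virtually cyclic subgroup $H=E(g)$ is hyperbolically embedded, say $H\hookrightarrow_h (G,Z)$ for some (possibly infinite) $Z\subseteq G$. By \cite{DGO}, the relative Cayley graph $\Gamma(G,Z\sqcup H)$ is hyperbolic, the metric $\dh$ it induces on $H$ (lengths of paths avoiding $H$-edges) is proper and $H$-invariant, the left cosets $\{g'H\}$ are quasiconvex and form an isolated, almost malnormal family, and $[G:H]=\infty$. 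Because $H$ is infinite virtually cyclic and $\dh$ is a proper $H$-invariant graph metric, the Cayley graph $(H,\dh)$ --- hence every translate $(g'H,\dh)$, with uniform constants --- is quasi-isometric to $\mathbb Z$, in particular a quasi-tree. The family $\mathbb Y=\{g'H : g'\in G\}$ of cosets, each carrying its copy of $\dh$, is the input of the construction, and $G$ acts on $\mathbb Y$ on the left, transitively, with $\mathrm{Stab}(H)=H$.

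Next I would equip $\mathbb Y$ with Bestvina--Bromberg--Fujiwara projections. For $aH\ne bH$ set $\pi_{aH}(bH)\subseteq aH$ to be the coarsely well-defined nearest-point projection of $bH$ onto $aH$ in $\Gamma(G,Z\sqcup H)$, with diameters of projections measured using the metric $\dh$ on $aH$. The main technical step is to verify, for a uniform constant $\xi$, the projection complex axioms for $(\mathbb Y,\{\pi_{aH}\},\xi)$: uniform boundedness of projections follows from quasiconvexity of cosets together with hyperbolicity of $\Gamma(G,Z\sqcup H)$, while the crucial finiteness axiom --- that for fixed $aH,bH$ only finitely many $cH$ have $\dh$-far-apart projections of $bH$ and $cH$ onto $aH$ --- follows from properness of $\dh$ and the isolation (a bounded-coset-penetration property) of $\mathbb Y$. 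All of this is read off from $\Gamma(G,Z\sqcup H)$, so the structure is $G$-equivariant. The theorem of Bestvina--Bromberg--Fujiwara then says the projection complex $\mathcal P_K(\mathbb Y)$ is a quasi-tree, and, since the pieces $(g'H,\dh)$ are uniformly quasi-trees, so is the quasi-tree of metric spaces $\mathcal C_K(\mathbb Y)$. Realizing each piece as a graph with vertex set $g'H$ and noting that the left cosets partition $G$, the vertex set of $\mathcal C_K(\mathbb Y)$ is canonically $G$, with $G$ acting by left multiplication freely and transitively on vertices; after rescaling the (uniformly bounded) inter-piece edge lengths to $1$, Sabidussi's theorem identifies $\mathcal C_K(\mathbb Y)$ with a Cayley graph $\Gamma(G,X)$ that is a quasi-tree, on which $G$ acts coboundedly, indeed vertex-transitively. (For the bare statement of Theorem \ref{main} one may equally well use the $G$-action on $\mathcal P_K(\mathbb Y)$ itself.)

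It remains to check that this action is non-elementary and acylindrical, and here I expect the real work. Non-elementarity: $[G:H]=\infty$ forces the complex to be unbounded; an element of $G$ that is loxodromic on $\Gamma(G,Z\sqcup H)$ and not conjugate into $H$ has an axis passing through infinitely many distinct cosets and therefore acts loxodromically on $\mathcal P_K(\mathbb Y)$, and a suitable conjugate provides a second, independent loxodromic, so $G$ has no bounded orbit and no fixed pair of endpoints, and $\Gamma(G,X)$ is a non-elementary quasi-tree. Acylindricity is the main obstacle. Given $\e>0$, choose $R$ large; if $x,y$ are points of $\Gamma(G,X)$ with $\d(x,y)\ge R$ then they lie in cosets $aH,bH$ that are far apart in the projection complex, and any $h\in G$ with $\d(x,hx)\le\e$ and $\d(y,hy)\le\e$ must, by the quasi-tree geometry and the finiteness axiom, carry $aH$ into one of boundedly many cosets near $aH$ and $bH$ into one of boundedly many near $bH$; translating this back into $G$ and using properness of $\dh$ then bounds the number of such $h$ by a constant $N=N(\e)$. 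Turning these coarse statements into estimates with constants independent of the chosen $x,y$, and extracting $N$ from $\dh$-properness, is the heart of the matter; it is the precise sense in which hyperbolic embeddedness yields WPD-type control. With non-elementarity, coboundedness, and acylindricity established, Theorem \ref{main} follows.
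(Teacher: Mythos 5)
Your setup (take $H=E(g)\hookrightarrow_h(G,Z)$, make the cosets $\mathbb Y=\{g'H\}$ into a Bestvina--Bromberg--Fujiwara system via nearest-point projections in $\Gamma(G,Z\sqcup H)$ with diameters measured in a relative metric) is the same as the paper's, and verifying (A1)--(A4) along the lines you indicate can indeed be done. But the decisive step is missing: you explicitly defer acylindricity ("the heart of the matter") to a hoped-for direct argument on the projection complex, and that direct argument is precisely what is not known how to do --- the paper states in the introduction that whether the BBF action is acylindrical is an open question, and its whole contribution is to sidestep this. The paper's route is: (i) use the projection complex only to produce a relative generating set $X\subset G$ such that $\Gamma(G,X\sqcup H)$ is quasi-isometric to $P_J(\mathbb Y)$, hence a quasi-tree, and such that $H\hookrightarrow_h(G,X)$ (properness of the new relative metric $\widehat{d}^{X}$ is proved by comparing it with the old one via Lemma \ref{alpha}); (ii) apply Osin's acylindrification theorem (Theorem \ref{DO}, i.e.\ Theorem 5.4 of \cite{ah}) to enlarge $X$ to $Y$ so that the action on $\Gamma(G,Y\sqcup H)$ is acylindrical; (iii) prove a Bowditch-type stability statement (Theorem \ref{B} plus Lemma \ref{bpr}) showing that the edges added in passing from $\Gamma(G,X\sqcup H)$ to $\Gamma(G,Y\sqcup H)$ do not destroy the quasi-tree property. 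Without an analogue of (ii)--(iii), or an actual proof of the uniform counting estimate you sketch, your argument does not establish acylindricity.

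Two secondary problems. First, $(H,\dh)$ is proper but is \emph{not} in general quasi-isometric to $\mathbb Z$, nor even finite-valued: in the paper's $H\ast\mathbb Z$ example $\dh(h_1,h_2)=\infty$ for $h_1\neq h_2$. So your claim that the pieces $(g'H,\dh)$ are uniformly quasi-trees, needed for the quasi-tree of metric spaces $\mathcal C_K(\mathbb Y)$, is unjustified; the paper avoids this entirely by working with the projection complex and the Cayley graph $\Gamma(G,X\sqcup H)$, in which each coset is coned to diameter $1$. Second, since $\dh$ may take the value $\infty$, it cannot be used as-is to define the $[0,\infty)$-valued projection distances; the paper first replaces it by a finite-valued proper metric $\widetilde d\le\dh$ (Theorem \ref{tilde}). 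These are fixable, but the acylindricity gap is essential.
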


In other words,  being acylindrically hyperbolic is equivalent to admitting a non-elementary acylindrical action on a quasi-tree are equivalent. Although this result does not produce any new class of groups, it can be useful in the study of acylindrically hyperbolic groups and their subgroups. In this paper we concentrate on proving Theorem \ref{main} and leave applications for  future papers to explore (for some applications, see \cite{MO}).  

It was known before that every acylindrically hyperbolic group admits a non-elementary cobounded action on a quasi-tree satisfying the so-called \emph{weak proper discontinuity} property, which is weaker than acylindricity. Such a quasi-tree can be produced by using projection complexes introduced by Bestvina-Bromberg-Fujiwara in \cite{BBF}.  To the best of our knowledge, whether the corresponding action is acylindrical is an open question. The main idea of the proof of Theorem \ref{main} is to combine the Bestvina-Bromberg-Fujiwara approach with an `acylindrification' construction from \cite{ah} in order to make the action acylindrical. An essential role in this process is played by the notion of a hyperbolically embedded subgroup introduced in \cite{DGO} - this fact is of independent interest since it provides a new setting for the application of the Bestvina-Bromberg-Fujiwara construction. 

The above mentioned construction has been applied in the setting of geometrically separated subgroups (see \cite{DGO}), but hyperbolically embedded subgroups do not necessarily satisfy this condition. Nevertheless, it is possible to employ them in this construction, possibly with interesting applications. If fact, we prove much stronger results in terms of hyperbolically embedded subgroups (see Theorem \ref{3 condns}) of which Theorem \ref{main} is an easy consequence, and derive an application in this paper which is stated below (see Corollary \ref{appln}). \begin{cor}\label{introv} Let $G$ be a group. If $H \leq K \leq G$ , $H$ is countable and $H$ is hyperbolically embedded in $G$, then $H$ is hyperbolically embedded in $K$.\end{cor}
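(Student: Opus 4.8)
The plan is to deduce the statement from Theorem~\ref{3 condns} by restricting to $K$ the geometric data that encodes a hyperbolic embedding. For a countable subgroup $H\le G$, that theorem characterizes the relation $H\h G$ through the existence of an acylindrical action of $G$ on a quasi-tree (equivalently, a hyperbolic space) $\mathcal{T}$ with respect to which $H$ satisfies a fixed list of conditions: in shape, that the orbit map of $H$ into $\mathcal{T}$ is a quasi-isometric embedding onto a quasi-convex orbit $H\cdot o$ (for some basepoint $o\in\mathcal{T}$) whose coarse intersections with its distinct $G$-translates are uniformly bounded. The feature of this that matters here is that every item on the list refers only to $H$, to $\mathcal{T}$, and to the conjugates of $H$, and never to the ``size'' of the ambient group.

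Granting this, the argument is short. Apply the forward implication of Theorem~\ref{3 condns} to $H\h G$ to obtain an acylindrical action $G\curvearrowright\mathcal{T}$ of the above type, and restrict it to the intermediate subgroup $K$. First, acylindricity passes to $K$ with the \emph{same} constants: for a fixed $\varepsilon$ and fixed points $x,y$, the group elements witnessing a violation of the defining inequalities form, for $K$, a subset of those for $G$. Second, the conditions that Theorem~\ref{3 condns} imposes on $H$ are untouched --- or only strengthened --- under the restriction: the orbit $H\cdot o$ and its internal geometry in $\mathcal{T}$ are the same whether $H$ is regarded inside $K$ or inside $G$, and the family of $K$-conjugates of $H$ is a sub-family of the family of $G$-conjugates, so the uniform boundedness of coarse intersections persists. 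Thus $K\curvearrowright\mathcal{T}$ verifies, for the subgroup $H$, all the hypotheses of the converse part of Theorem~\ref{3 condns}, and applying that converse yields $H\h K$. This uses the countability of $H$ --- which is where the hypothesis is consumed --- but nothing about the cardinalities of $K$ or $G$.

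The point requiring care is whether the converse implication in Theorem~\ref{3 condns} is genuinely robust when the acting group is shrunk from $G$ to $K$. If, as stated, it also demands some non-degeneracy of the ambient action (non-elementarity, or coboundedness), this may fail for $K\curvearrowright\mathcal{T}$: for instance, when $K=H$ the $K$-orbit in $\mathcal{T}$ can be bounded. One then has to revisit the proof of that converse and check that any such hypothesis is being used only to draw conclusions about the whole group (that it is acylindrically hyperbolic, say), whereas the implication ``acylindrical action $+$ the orbit conditions on $H$ $\;\Rightarrow\;$ $H\h(\text{the acting group})$'' needs no such input; the degenerate cases, $H$ finite or $H=K$, can in any event be handled directly. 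Confirming this --- that the criterion for hyperbolic embedding never looks past the $H$-orbit and its translates --- is the main, and essentially the only, thing left to do.
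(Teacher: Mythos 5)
Your argument rests on a misreading of Theorem \ref{3 condns}. That theorem is not a characterization of hyperbolic embedding in terms of an acylindrical action on a quasi-tree under which the $H$-orbit is a quasi-isometrically embedded, quasi-convex set with uniformly bounded coarse intersections with its translates; it is precisely the statement that if $H_1,\dots,H_n$ are countable, $\{H_i\}\hookrightarrow_h (G,Z)$ and $H_i\le K\le G$, then there exists $Y\subset K$ with $\{H_i\}\hookrightarrow_h (K,Y)$ (and moreover $\Gamma(K,Y\sqcup\mathcal{H})$ is a quasi-tree on which $K$ acts acylindrically). So the corollary is immediate from that theorem with $n=1$: condition (a) already says $H\hookrightarrow_h (K,Y)$, hence $H\hookrightarrow_h K$. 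No restriction-of-action argument is needed, and this is exactly how the paper derives Corollary \ref{appln}.

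The alternative route you actually sketch --- extract from $H\hookrightarrow_h G$ an acylindrical action $G\curvearrowright\mathcal{T}$ with the orbit conditions on $H$, restrict to $K$, and feed the restricted action into a converse criterion --- has a genuine gap, and it is the one you flag yourself at the end. Criteria of this shape (e.g.\ the geometric-separation criterion of Dahmani--Guirardel--Osin) require the acting group to act coboundedly and $H$ to act properly with quasi-convex orbit; coboundedness is destroyed when you pass from $G\curvearrowright\mathcal{T}$ to $K\curvearrowright\mathcal{T}$, and in the natural candidate space $\Gamma(G,X\sqcup\mathcal{H})$ the $H$-orbit is bounded, so properness of the $H$-action fails there. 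Moreover, as the paper points out, hyperbolically embedded subgroups need not be geometrically separated, so even the ``forward implication'' you posit is not available off the shelf. Declaring that one must ``revisit the proof of that converse'' and check the non-degeneracy hypotheses are inessential is not a small verification to postpone: overcoming exactly this loss of coboundedness is the content of the paper, which rebuilds the geometry from scratch via the Bestvina--Bromberg--Fujiwara projection complex on the cosets $kH_i$, $k\in K$, with projections measured in the modified relative metric $\widetilde{d}$, verifies the axioms (A1)--(A4), and then acylindrifies using Theorem \ref{DO} together with Theorem \ref{B}. As written, your proposal reduces the corollary to an unproved claim that does not appear in the paper and is essentially as hard as the theorem you are invoking.
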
 We would like to note that the above result continues to hold even when we have a finite collection $\{H_1, H_2,...,H_n\}$ of hyperbolically embedded subgroups in $G$ such that $H_i \leq K$ for all $i=1,2,...,n$. Interestingly, A.Sisto obtains a similar result in \cite{sim}, Corollary 6.10. His result does not require $H$ to be countable, but under the assumption that $H \cap K$ is a virtual retract of $K$, it  states that $H \cap K \hookrightarrow_h K$. Although similar, these two theorems are distinct in the sense that neither follows from the other.  

Another application of Theorem \ref{3 condns} is to the case of finitely generated subgroups, as stated below (see Corollary \ref{newqc}). \begin{cor} Let $H$ be a finitely generated subgroup of an acylindrically hyperbolic group $G$. Then there exists a subset $X \subset G$ such that \begin{itemize} 
\item[(a)] $\Gamma(G, X)$ is hyperbolic, non-elementary and acylindrical 
\item[(b)] $H$ is quasi-convex in $\Gamma(G,X)$
\end{itemize} 
\end{cor}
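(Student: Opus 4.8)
The plan is to derive the corollary from Theorem \ref{3 condns} by producing a hyperbolically embedded subgroup of $G$ that sees $H$ quasi-convexly, and then transporting quasi-convexity through the quasi-tree that Theorem \ref{3 condns} furnishes. Two degenerate cases are immediate: if $H$ is finite, then for any generating set $X$ supplied by Theorem \ref{main} the orbit $H\cdot 1$ is bounded in $\Gamma(G,X)$, hence quasi-convex; and if $[G:H]<\infty$, then $H\cdot 1$ is coarsely dense in $\Gamma(G,X)$, hence again quasi-convex. So I may assume $H$ is infinite of infinite index.

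Next, I fix a generating set $X_0$ as in Theorem \ref{main}, so that $G$ acts acylindrically and non-elementarily on the quasi-tree $T=\Gamma(G,X_0)$. The restricted action of $H$ on $T$ is again acylindrical, so by the classification of groups acting acylindrically on a hyperbolic space, $H$ is elliptic, lineal, or of general type on $T$. If $H$ is elliptic, its orbit in $T$ is bounded, so $X=X_0$ works. If $H$ is lineal, I choose a loxodromic $g\in H$; then $H$ lies in the maximal virtually cyclic subgroup $E=E_G(g)$ and $[E:H]<\infty$. Since $g$ is loxodromic for the acylindrical $G$-action on $T$, the subgroup $E$ is hyperbolically embedded in $G$; applying Theorem \ref{3 condns} to the single (countable) subgroup $E$ yields a generating set $X$ of $G$ for which $\Gamma(G,X)$ is a non-elementary acylindrical quasi-tree in which the orbit map of $E$ is a quasi-isometric embedding with quasi-convex image. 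Since $[E:H]<\infty$, the orbit $H\cdot 1$ is coarsely dense in $E\cdot 1$, hence quasi-convex in $\Gamma(G,X)$; as quasi-trees are hyperbolic, this yields (a) and (b).

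The genuinely hard case is that $H$ is of general type on $T$. The obstruction is that such an $H$ need not be contained in any proper hyperbolically embedded subgroup of $G$ --- already $\langle a,bab^{-1}\rangle\leq F_2$ is of this kind --- so the lineal argument does not apply directly. Here I would instead select inside $H$ a finite family $g_1,\dots,g_n$ of pairwise independent loxodromic elements whose maximal elementary subgroups $E_i=E_G(g_i)$ form a hyperbolically embedded family in $G$, feed the collection $\{E_1,\dots,E_n\}$ into Theorem \ref{3 condns} to obtain a generating set $X$ with $\Gamma(G,X)$ a non-elementary acylindrical quasi-tree in which each $E_i$ is quasi-convex, and then argue that $d_X$ restricted to $H$ is already quasi-isometric to the word metric of $H$ --- equivalently, that $H\cdot 1$ is quasi-convex in $\Gamma(G,X)$ --- by tracking how the axes in $T$ of elements of $H$ project onto the pieces of the projection complex underlying the construction of Theorem \ref{3 condns}. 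Making this last step precise is the crux: one must rule out the orbit of $H$ winding in the quasi-tree, and the idea is to leverage the control that the construction of Theorem \ref{3 condns} already exerts over the hyperbolically embedded family $\{E_i\}$ and extend it to all of $H$. A cleaner but less direct alternative would be to show outright that any generating set can be enlarged so that $H$ becomes quasi-convex while keeping the space a quasi-tree and the action acylindrical; this too reduces to the same anti-winding estimate.
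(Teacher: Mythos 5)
Your proposal does not close the case you yourself identify as the crux, and that case is where the actual content would have to be: for $H$ of general type you only sketch a hope of ``tracking projections'' and explicitly leave the anti-winding estimate unproven, so as written this is not a proof. Worse, the target you set yourself there is the wrong one. You assert that showing $d_X$ restricted to $H$ is quasi-isometric to the word metric of $H$ is ``equivalent'' to quasi-convexity of $H\cdot 1$ in $\Gamma(G,X)$. That equivalence is a fact about proper actions on locally finite Cayley graphs; here $X$ is infinite and the action of $H$ is far from proper, so quasi-convexity (as defined in the paper: geodesics between points of $H$ stay in a bounded neighborhood of $H$) neither implies nor requires undistortedness. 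In fact the stronger statement you are aiming for is false in general: the introduction points out that via Rips' construction one gets infinite, infinite-index normal subgroups that are quasi-convex for some non-elementary acylindrical action, and such subgroups cannot have quasi-isometrically embedded orbits. So the ``general type'' difficulty you are trying to overcome is an artifact of proving too much.

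The point you are missing is that no case analysis is needed at all, because of Lemma \ref{inqtree}: in a quasi-tree, \emph{every} coarsely connected subset is quasi-convex (this reduces, via a quasi-isometry to a tree and Lemma \ref{bottlenecklemma}, to the elementary tree statement in Lemma \ref{intree}). The paper's proof therefore just takes the generating set $X$ from Corollary \ref{finally} (equivalently Theorem \ref{main}), notes that if $H=\langle x_1,\dots,x_n\rangle$ then consecutive points $u, uw_1, uw_1w_2,\dots$ show $H$ is $\e$-coarsely connected in $d_X$ with $\e=\max_i d_X(1,x_i^{\pm1})$, and concludes quasi-convexity from Lemma \ref{inqtree}. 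Your elliptic and lineal reductions are correct but superfluous; the finitely generated hypothesis enters only through coarse connectedness, not through any dynamical classification of the $H$-action.
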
 
The above result indicates that in order to develop a theory of quasi-convex subgroups in acylindrically hyperbolic groups, the notion of quasi-convexity is not sufficient, i.e., a stronger set of conditions is necessary in order to prove results similar to those known for quasi-convex subgroups in hyperbolic groups. For example, using Rips' construction from \cite{Rips} and the above corollary, one can easily construct an example of an infinite, infinite index, normal subgroup in an acylindrically hyperbolic group, which is quasi-convex with respect to some non-elementary acylindrical action. \\

\textbf{Acknowledgements:}  My heartfelt gratitude to my advisor Denis Osin for his guidance and support, and to Jason Behrstock and Yago Antolin Pichel  for their remarks. My sincere thanks to Bryan Jacobson for his thorough proof-reading and comments on this paper. 

\section{Preliminaries}
We recall some definitions and theorems which we will need to refer to.

\subsection{Relative Metrics on subgroups}
\begin{defn}[Relative metric] \label{rm} Let $G$ be a group  and $ \Hl $ a fixed collection of subgroups of $G$. Let $X \subset G$ such that $G$ is generated by $X$ along with the union of all $\Hl$. Let $\mathcal{H} =\bigsqcup_{\lambda \in \Lambda}  H_\lambda$.  We denote the corresponding Cayley graph of $G$ (whose edges are labeled by elements of $X \sqcup \mathcal{H}$) by $\Gamma(G, X \sqcup \mathcal{H})$. 

\begin{rem} \label{bigons} It is important that the union in the definition above is disjoint. This disjoint union leads to the following observation $\colon$ for every $h \in H_i \cap H_j$, the alphabet $\mathcal{H}$ will have two letters representing $h$ in $G$, one from $H_i$ and another from $H_j$. It may also be the case that a letter from $\mathcal{H}$ and a letter from $X$ represent the same element of the group $G$. In this situation, the corresponding Cayley graph $\Gamma(G, X \sqcup \mathcal{H})$ has bigons (or multiple edges in general) between the identity and the element, one corresponding to each of these letters.\end{rem}

We think of $\Gamma(H_\lambda,H_\lambda)$  as a complete subgraph in $\Gamma(G, X \sqcup \mathcal{H})$. A path $p$ in $\Gamma(G, X \sqcup \mathcal{H})$ is said to be \textit{$\lambda$-admissible} if it contains no edges of the subgraph  $\Gamma(H_\lambda,H_\lambda)$. In other words, the path $p$ does not travel through $H_\lambda$ in the Cayley graph. Using this notion, we can define a metric, known as the relative metric $\widehat{d}_\lambda : H_\lambda \times H_\lambda \rightarrow [0, +\infty ] $ by setting  $\widehat{d}_\lambda(h,k)$ for $h, k \in H_\lambda$ to be the length of the shortest admissible path in $\Gamma(G, X \sqcup \mathcal{H})$ that connects $h$ to $k$. If no such path exists, we define $\widehat{d}_\lambda(h,k) = +\infty$. It is easy to check that $\widehat{d}_\lambda$ satisfies all the conditions to be a metric function. 
\end{defn}

\begin{defn} Let $q$ be a path in the Cayley graph of $\Gamma(G, X \sqcup \mathcal{H})$ and $p$ be a nontrivial subpath of $q$. $p$ is said to be an \textit{$H_\lambda$- subpath} if the label of $p$ (denoted \textbf{Lab}($p$)) is a word in the alphabet $H_\lambda$. Such a subpath is further called an \textit{$H_\lambda$-component} if it is not contained in a longer $H_\lambda$-subpath of $q$. If $q$ is a loop, we must also have that $p$ is not contained in a longer $H_\lambda$-subpath of any cyclic shift of $q$. We refer to an $H_\lambda$-component of $q$ (for some $\lambda \in \Lambda$) simply by calling it a \textit{component} of $q$. We note that on a geodesic, $H_\lambda$- components must be single $H_\lambda$-edges.

Let $p_1, p_2$ be two $H_\lambda$ components of  a path $q$ for some $\lambda \in \Lambda$. $p_1$ and $p_2$ are said to be \textit{connected} if there exists a path $p$ in $\Gamma(G, X \sqcup \mathcal{H})$  such that \textbf{Lab}($p$) is a word consisting only of letters from $H_\lambda$, and $p$ connects some vertex of $p_1$ to some  vertex of $p_2$. In algebraic terms, this means that all vertices of $p_1$ and $p_2$ belong to the same (left) coset of $H_\lambda$. We refer to a component of a path $q$ as \textit{isolated} if it is not connected to any other component of $q$.
\end{defn}

If $p$ is a path, we denote its initial point by $p_{-}$ and its terminating point by $p_{+}$. 

\begin{lem}[\cite{DGO}, Proposition 4.13] \label{isolated}  Let $G$ be a group  and $ \Hl $ a fixed collection of subgroups in $G$. Let  $X \subset G$ such that $G$ is generated by $X$ together with the union of all $\Hl$. Then there exists a constant $C > 0$ such that for any n-gon p with geodesic sides in  $\Gamma(G, X \sqcup \mathcal{H})$, any $\lambda \in \Lambda$, and any isolated $H_\lambda$ component a of p,  $\widehat{d}_\lambda(a_{-}, a_{+}) \leq Cn$.
\end{lem}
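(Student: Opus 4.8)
\emph{Approach.} I would build an explicit $\lambda$-admissible path from $a_-$ to $a_+$ out of the cycle $p$ itself, and then shorten it so that its length is controlled by $n$ rather than by the perimeter of $p$. After cyclically relabelling the sides, assume $a$ lies on $p_1$ and split $p_1 = u\,a\,v$, with $u$ from $(p_1)_-$ to $a_-$ and $v$ from $a_+$ to $(p_1)_+$. Then $r := v\,p_2\,p_3\cdots p_n\,u$ runs from $a_+$ to $a_-$, so its inverse $\bar r$ joins $a_-$ to $a_+$ --- just as the single edge $a$ does --- and $\bar r$ is a concatenation of at most $n+1$ geodesic subpaths (the pieces $\bar u$, $\bar p_n,\dots,\bar p_2$, $\bar v$), so it has at most $n$ corners. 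The first point is that $\bar r$ is already $\lambda$-admissible: the definition forbids only edges of $\Gamma(H_\lambda,H_\lambda)$, i.e.\ $H_\lambda$-edges with both endpoints in the coset $\mathcal C := a_-H_\lambda$ of $a$, and any such edge occurring on $r$ would lie in an $H_\lambda$-component of $p$ sharing the coset $\mathcal C$ with $a$ --- hence \emph{connected} to $a$, contradicting isolation. Thus $\widehat d_\lambda(a_-,a_+)\le|\bar r|<\infty$, and the task reduces to improving $|\bar r|$ to $O(n)$.

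\emph{Reduction to excursions.} The elementary fact I would use is that a geodesic of $\Gamma(G,X\sqcup\mathcal H)$ meets each left coset $gH_\mu$ in at most one edge, since two vertices of a common coset lie at distance at most $1$ in $\Gamma(G,X\sqcup\mathcal H)$. Hence the vertices of $\bar r$ lying in $\mathcal C$ cut $\bar r$ into subpaths, the \emph{excursions}, each running between two consecutive $\mathcal C$-vertices and meeting $\mathcal C$ only at its endpoints. Each excursion overlaps at least two of the geodesic pieces of $\bar r$: a geodesic subpath with both endpoints in $\mathcal C$ has length at most $1$, hence would be a single $H_\lambda$-edge of $p$ lying in $\mathcal C$, contradicting isolation exactly as above. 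Consequently each excursion contains a corner of $\bar r$ in its interior, and these corners are distinct for distinct excursions, so there are at most $n$ excursions. By left-invariance of $\widehat d_\lambda$ along the coset $\mathcal C$ together with subadditivity, $\widehat d_\lambda(a_-,a_+)$ is at most the sum, over these (at most $n$) excursions, of the relative distance between the endpoints of each --- so it suffices to bound one excursion by a constant independent of $n$.

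\emph{The crux, and the expected obstacle.} Bounding a single excursion is where I expect the real difficulty. Such an excursion $\sigma$ from $z$ to $z'$ (with $z,z'\in\mathcal C$) begins and ends with non-$H_\lambda$ edges, say $z\to w$ and $w'\to z'$ with $w,w'\notin\mathcal C$, and the natural first attempt is to join $w$ to $w'$ by a single $H_\lambda$-edge, producing a $\lambda$-admissible detour $z\to w\to w'\to z'$ of length $3$ --- legal because that middle edge lies in a coset other than $\mathcal C$. This succeeds in simple situations (for instance when $H_\lambda$ is normal), but in general $w$ and $w'$ need not share a coset, and the substantive issue is to obtain a uniform bound anyway. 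For this one has to genuinely use that $\sigma$ turns at least one corner of $p$ --- so that its endpoints lie on two geodesics of $\Gamma(G,X\sqcup\mathcal H)$ that meet --- rather than merely reading off the (possibly very long) word carried by $\bar r$. Should a direct argument resist, the fallback is an induction on the number of sides: closing $\sigma$ up with the $H_\lambda$-edge joining $z$ and $z'$ yields a new geodesic polygon in which that edge is again an isolated $H_\lambda$-component, and one sets up the induction so the relevant complexity strictly decreases. The first two steps are essentially bookkeeping once one has the ``a geodesic meets a coset in at most one edge'' observation and the definition of connectedness of components; this third step is the one to concentrate on.
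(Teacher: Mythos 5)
First, a point of comparison: the paper does not prove this statement at all --- Lemma \ref{isolated} is imported verbatim from \cite{DGO} (Proposition 4.13), so your attempt has to be measured against the argument there, which is a genuinely nontrivial result of that paper rather than a formality. Your first two steps are sound bookkeeping. The path $\bar r$ around the polygon is indeed $\lambda$-admissible exactly because $a$ is isolated, and the observation that a geodesic meets a fixed coset in a subsegment of length at most $1$ is correct and useful. One slip, though: a length-one geodesic subpath with both endpoints in $\mathcal C$ need not be an $H_\lambda$-edge --- it could be an $X$-edge or an $H_\mu$-edge, $\mu\neq\lambda$, that happens to join two vertices of $\mathcal C$ (cf.\ Remark \ref{bigons}) --- so ``every excursion contains a corner'' is not justified and your bound of $n$ on the number of excursions fails as stated; it can be repaired by noting that each geodesic side carries at most two vertices of $\mathcal C$, giving at most about $2n$ excursions.

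The genuine gap is your third step, which you yourself flag as unresolved, and it is not a detail: it is the entire content of the lemma. Bounding $\widehat d_\lambda(z,z')$ across a single excursion by a constant depending only on $(G,X,\{H_\lambda\}_{\lambda\in\Lambda})$ is not easier than the original statement: closing the excursion with the $H_\lambda$-edge joining $z$ and $z'$ reproduces exactly the configuration of the lemma (a cycle of geodesic pieces in which that edge is an isolated $H_\lambda$-component), and an excursion may consist of up to roughly $n$ geodesic pieces, so no complexity measure has been shown to decrease --- for $n=2$ your reduction returns the problem essentially unchanged. The substantive difficulty, which the outline does not touch, is that the short admissible path whose existence the lemma asserts cannot in general be obtained by locally shortcutting $\bar r$: the sides may be long and pass through pairwise distinct cosets, so a length-$O(n)$ detour is not visible inside the polygon, and producing it (equivalently, showing that its absence forces $a$ to be non-isolated) is precisely what the proof in \cite{DGO} accomplishes and what the constant $C$ comes from. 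As it stands, your proposal establishes only the finiteness $\widehat d_\lambda(a_-,a_+)<\infty$ together with a reduction that is essentially equivalent to the lemma itself, so it does not constitute a proof.
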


\subsection{Hyperbolically embedded subgroups}

Hyperbolically embedded subgroups will be our main tool in constructing the quasi-tree. The notion has been taken from \cite{DGO}. We recall the definition here. 

\begin{defn}[Hyperbolically embedded subgroups] Let $G$ be a group. Let $X$ be a (not necessarily finite) subset of $G$ and  let $\{H_\lambda \}_{\lambda \in \Lambda}$ be a collection of subgroups of $G$. We say that $\{H_\lambda \}_{\lambda \in \Lambda}$ is \textit{hyperbolically embedded} in $G$ with respect to $X$  (denoted by $\{H_\lambda\}_{\lambda \in \Lambda} \hookrightarrow_{h} (G, X)$ ) if the following conditions hold $\colon$
\begin{enumerate}
\item[(a)] The group $G$ is generated by $X$ together with the union of all $\{H_\lambda\}_{\lambda \in \Lambda}$ 
\item[(b)] The Cayley graph $\Gamma(G, X \sqcup \mathcal{H})$ is hyperbolic, where $\mathcal{H} = \bigsqcup_{\lambda \in \Lambda} H_\lambda$. 
\item[(c)] For every $\lambda \in \Lambda$, the metric space $(H_\lambda, \widehat{d}_\lambda)$ is proper, i.e., every ball of finite radius has finite cardinality.
\end{enumerate}
\end{defn}

Further we say that $\{H_\lambda\}_{\lambda \in \Lambda}$  is hyperbolically embedded in $G$ (denoted by $\{H_\lambda\}_{\lambda \in \Lambda} \hookrightarrow_{h} G$) if
$\{H_\lambda\}_{\lambda \in \Lambda} \hookrightarrow_{h} (G, X) $ for some $X \subseteq G$. The set $X$ is called a \textit{relative generating set}. 

\begin{figure}
\centering
\begin{minipage}{.49\textwidth}
  \centering
  \def\svgscale{0.325}
  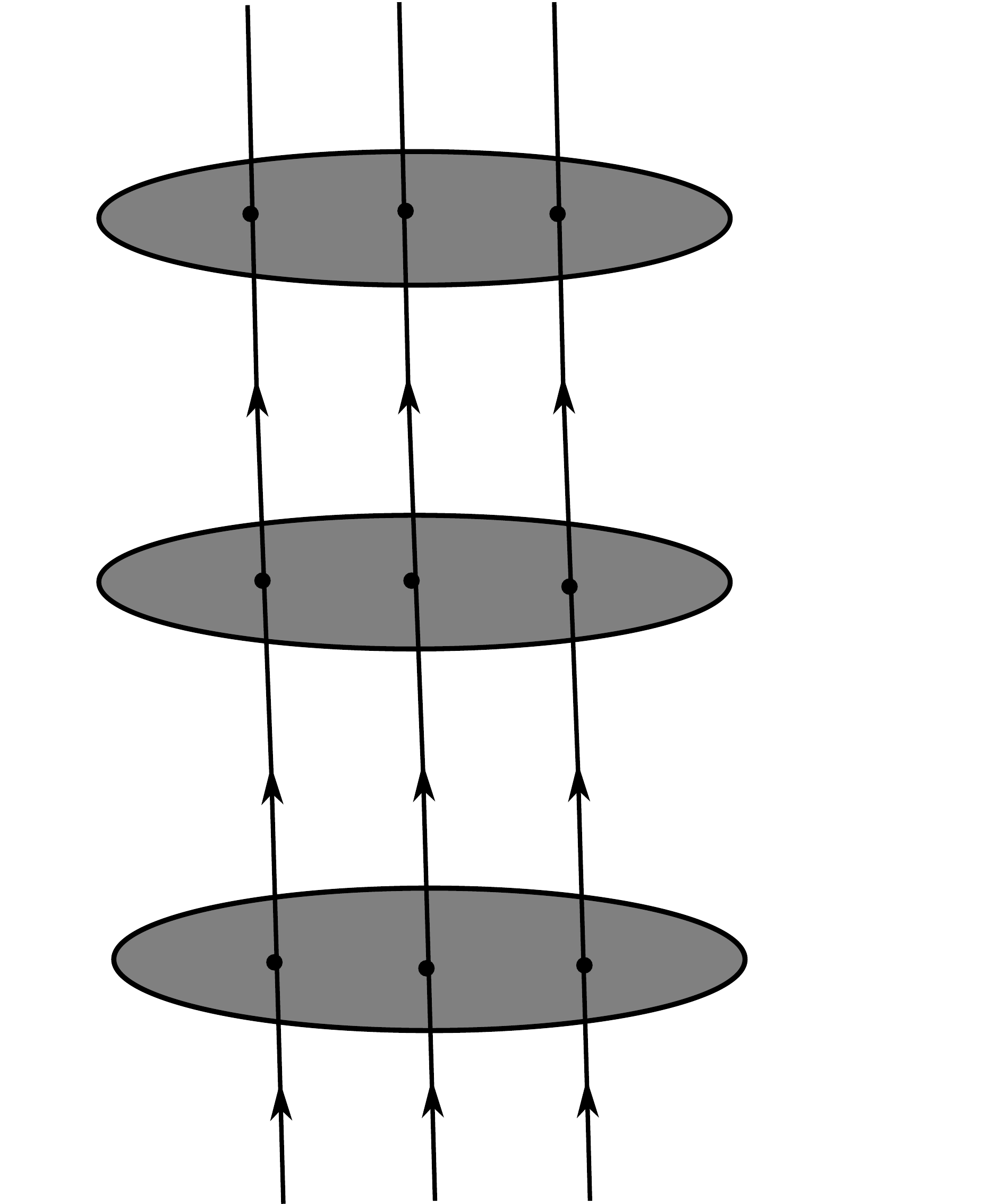
\caption{$H \times \mathbb{Z}$}
\label{1a}
\end{minipage}%
\begin{minipage}{.49\textwidth}
  \centering
  \def\svgscale{0.325}
  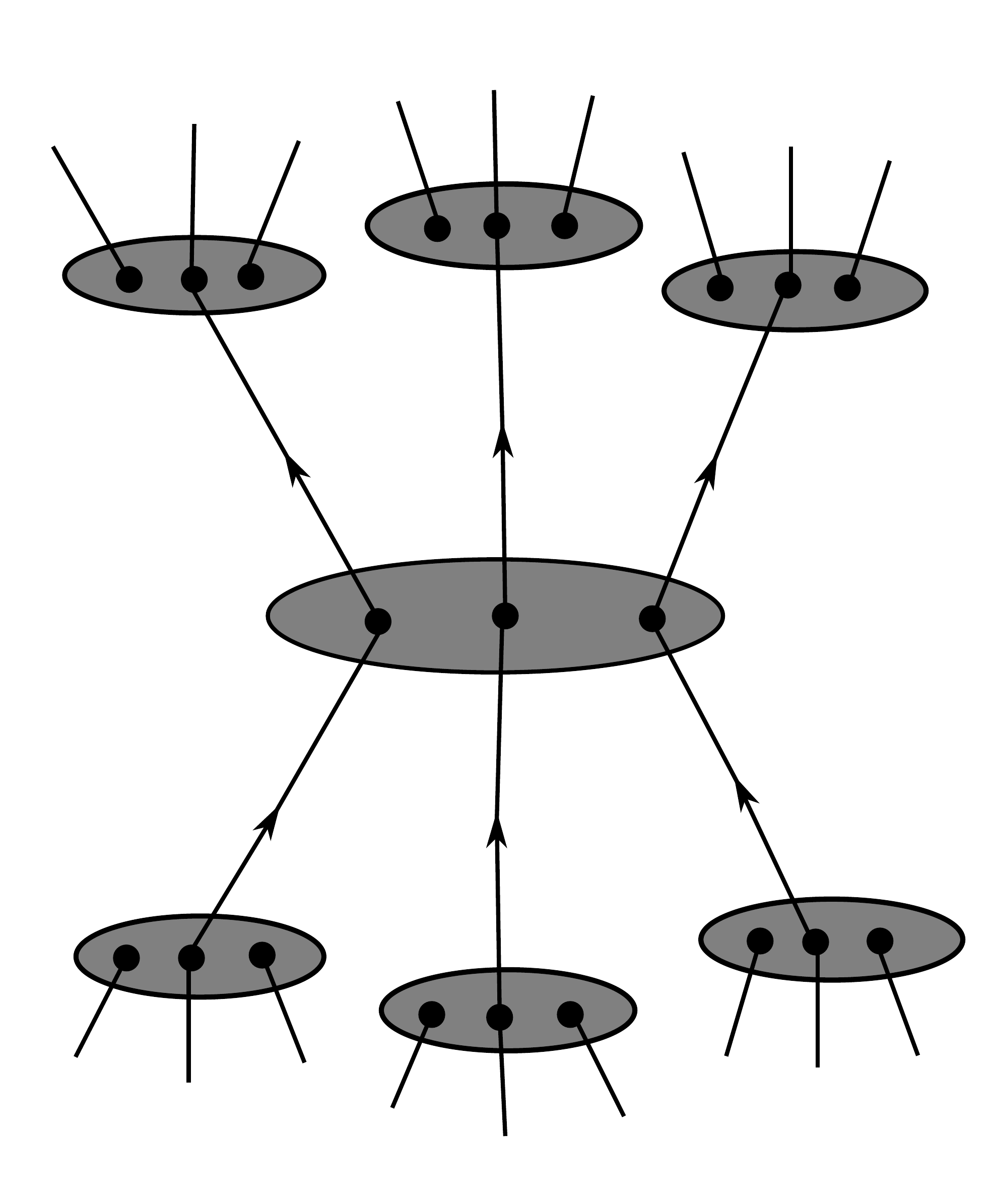
 \caption{$H * \mathbb{Z}$}
\label{1b}
\end{minipage}
\end{figure}

Since the notion of a hyperbolically embedded subgroup plays a crucial role in this paper, we include two examples borrowed from \cite{DGO}.

\begin{ex} Let $G = H \times \mathbb{Z}$ and $\mathbb{Z} = \langle x\rangle$. Let $X = \{x\}$. Then $\Gamma(G, X \sqcup H)$ is quasi-isometric to a line and is hence hyperbolic. The corresponding relative metric satisfies the following inequality which is easy to see from the Cayley graph (see Fig.\ref{1a}) $\colon \widehat{d}(h_1, h_2) \leq 3$ for every $h_1, h_2 \in H$. Indeed if $\Gamma_H$ denotes the Cayley graph $\Gamma(H, H)$, then in its shifted copy $x\Gamma_H$, there is an edge $e$ connecting $xh_1$ to $xh_2$ (labeled by $h_{1}^{-1}h_2 \in H$) . There is thus an admissible path of length $3$ connecting $h_1$ to $h_2$. We conclude that if $H$ is infinite, then $H$ is not hyperbolically embedded in $(G,X)$, since the relative metric will not be proper. 
In this example, one can also note that the admissible path from $h_1$ to $h_2$ contains an $H$-subpath, namely the edge $e$, which is also an $H$-component of this path. 
\end{ex}

\begin{ex} Let $G = H * \mathbb{Z}$ and $\mathbb{Z} = \langle x \rangle$.  As in the previous example, let $X = \{x\}$. In this case $\Gamma(G, X \sqcup H)$ is quasi-isometric to a tree (see Fig.\ref{1b}) and it is easy to see that $\widehat{d}(h_1, h_2) = \infty$ unless $h_1 = h_2$. This means that every ball of finite radius in the relative metric has cardinality $1$. We can thus conclude that $H \hookrightarrow_h (G,X)$.
\end{ex}

\subsection{A slight modification to the relative metric} 
The aim of this section is to modify the relative metric on countable subgroups that are hyperbolically embedded, so that the resulting metric takes values only in $\mathbb{R}$, i.e., is finite valued. This will be of importance in section \ref{constr}. The main result of this section is the following.
\begin{thm}\label{tilde} Let $G$ be a group. Let $H < G$ be countable, such that $H \hookrightarrow_h G$. Then there exists a metric $\widetilde{d} \colon H \times H \rightarrow \mathbb{R}$, such that \begin{itemize} \item[(a)] $\widetilde{d} \leq \widehat{d}$ \item[(b)] $\widetilde{d}$ is proper, i.e., every ball of finite radius has finitely many elements. \end{itemize} \end{thm}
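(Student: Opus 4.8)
The plan is to modify $\widehat{d}$ by ``coning off'' the subgroup $H$ at its identity element, with cone-weights chosen to grow fast enough along a fixed enumeration of $H$. Concretely, fix a relative generating set $X$ realising $H\hookrightarrow_h(G,X)$ and let $\widehat{d}=\widehat{d}_H$ be the associated relative metric; the only feature of $\widehat{d}$ I shall use is that it is proper (condition (c) of the definition), whereas $\widehat{d}$ itself may take the value $+\infty$. Since $H$ is countable, fix an enumeration $H=\{h_0,h_1,h_2,\dots\}$ with $h_0=1$ and set $f(h_n)=n$; thus $f(1)=0$, $f\geq 1$ on $H\setminus\{1\}$, and $\{h\in H: f(h)\leq r\}$ is finite for every $r$. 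Build the weighted graph $\mathcal{G}$ on vertex set $H$ having, for each pair $h\neq k$ with $\widehat{d}(h,k)<\infty$, an edge of weight $\widehat{d}(h,k)$, and, for each $n\geq 1$, one further edge joining $h_n$ to $1$ of weight $n$. Let $\widetilde{d}$ be the shortest-path pseudometric of $\mathcal{G}$, i.e. $\widetilde{d}(h,k)$ is the infimum of the total weight over all paths in $\mathcal{G}$ from $h$ to $k$.

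First I would check (a) together with finiteness, which are immediate. Symmetry and the triangle inequality hold for any shortest-path pseudometric. Every edge of $\mathcal{G}$ between distinct vertices has weight at least $1$, so $\widetilde{d}(h,k)\geq 1$ whenever $h\neq k$, and hence $\widetilde{d}$ is a genuine metric. Since every $\widehat{d}$-edge is present in $\mathcal{G}$, we get $\widetilde{d}\leq\widehat{d}$. Finally, for any $m,n$ the two added edges through $1$ give $\widetilde{d}(h_m,h_n)\leq m+n<\infty$, so $\widetilde{d}$ is finite-valued.

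The substantive point is (b). Since $\widetilde{d}(1,g)<\infty$ for all $g$ and $B_{\widetilde{d}}(g,r)\subseteq B_{\widetilde{d}}(1,r+\widetilde{d}(1,g))$, it suffices to prove that each ball $B_{\widetilde{d}}(1,r)$ is finite. So fix $h\neq 1$ with $\widetilde{d}(1,h)\leq r$ and pick a path in $\mathcal{G}$ from $1$ to $h$ of total weight $\leq r$; after deleting the portion preceding the last vertex of the path equal to $1$, I may assume the path meets the vertex $1$ only at its initial vertex. The only edges of $\mathcal{G}$ incident to $1$ which are not $\widehat{d}$-edges are the added edges $\{1,h_n\}$; since $1$ occurs only as the initial vertex, an added edge can appear only as the first edge of the (truncated) path. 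Hence one of two things happens: either the path uses $\widehat{d}$-edges only, in which case its weight is $\geq\widehat{d}(1,h)$ and so $\widehat{d}(1,h)\leq r$; or its first edge is $\{1,h_n\}$ for some $n\geq 1$ and all remaining edges are $\widehat{d}$-edges, in which case its weight is $\geq n+\widehat{d}(h_n,h)$, forcing $n\leq r$ and $\widehat{d}(h_n,h)\leq r$. Consequently
$$
B_{\widetilde{d}}(1,r)\ \subseteq\ B_{\widehat{d}}(1,r)\ \cup \bigcup_{1\leq n\leq r} B_{\widehat{d}}(h_n,r),
$$
which is a finite union (only finitely many $n$ satisfy $n\leq r$) of $\widehat{d}$-balls, each finite by properness of $\widehat{d}$. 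Thus $B_{\widetilde{d}}(1,r)$ is finite, which gives (b).

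The construction and the verification of (a) are routine; the one place that needs care is the normalisation in (b). One must be sure that truncating the connecting path at its final passage through $1$ leaves at most one added edge, necessarily as the first edge — it is precisely this that converts a single bound $\widetilde{d}(1,h)\leq r$ into the pair of bounds $f(h_n)\leq r$ and $\widehat{d}(h_n,h)\leq r$, which together trap $h$ inside a finite set. Everything else is bookkeeping.
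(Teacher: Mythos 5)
Your proof is correct, and it takes a recognizably different route from the paper's, though the underlying idea (use countability of $H$ to assign finite weights where $\widehat{d}$ fails to be finite, then pass to the largest metric compatible with those weights) is the same. The paper fixes an exhaustion of $H$ by finite symmetric sets, defines a weight $w(h)$ equal to $\widehat{d}(1,h)$ when finite and to the exhaustion index otherwise, and takes the associated weighted \emph{word} metric $d_w(g,h)=|g^{-1}h|_w$; properness is verified by bounding both the number of letters and the weight of each letter in a minimizing word. You instead cone off $H$ at the identity: you keep all finite $\widehat{d}$-distances as edge weights and add one edge from $1$ to each $h_n$ of weight $n$, then take the shortest-path metric; properness follows from normalizing a minimizing path so that it passes through $1$ only at its start, which places $h$ in a finite union of $\widehat{d}$-balls. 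Your construction uses only the metric structure of $(H,\widehat{d})$ (plus a basepoint) and never invokes group multiplication, whereas the paper's $d_w$ is left-invariant by construction, which is a natural feature to preserve since $\widehat{d}$ itself is left-invariant (the theorem does not require it, and nothing later in the paper breaks with your basepoint-dependent metric, since only (a) and (b) are used). Note that your argument does use properness of $\widehat{d}$ at the arbitrary centers $h_n$, not just at $1$; this is covered by the definition of properness in the paper, so there is no gap. Both arguments are equally elementary; yours trades the word-length bookkeeping for the path-normalization step at the cone point.
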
 

\begin{proof} There exists a collection of finite, symmetric (closed under inverses) subsets $\{F_i\}$ of H such that $H = \bigcup_{i=1}^{\infty} F_i$ and ${1} \subseteq F_1 \subseteq F_2 \subseteq ...$

Let $\widehat{d}$ be the relative metric on $H$. Let $H_0 = \{ h\in H \mid \widehat{d}(1, h) < \infty \}$. 

Define a function $w : H \rightarrow \mathbb{N}$ as \begin{center} $w(h) = \left \{ 
\begin{tabular}{cc}
  $\widehat{d}(1, h)$  & , if $h \in H_0$ \\
  min $\{ i \mid h \in F_i\}$ & , otherwise
  \end{tabular}
\right. $
\end{center}

Since $F_i$'s are symmetric, $w(h) = w(h^{-1})$ for all $h \in H$. Define a function $l$ on H as follows- for every word $u = x_1x_2...x_k$ in the elements of $H$, set $$l(u) = \sum_{i=1}^{k} w(x_i).$$ Set a length function on $H$ as 
\begin{center}$|g|_w = min \{l(u) \mid $ u is a word in the elements of $H$ that represents $g\}$, \end{center} for each $g$ in $H$. We can now define a metric $d_w \colon H \times H \rightarrow \mathbb{N}$ as $$d_w(g,h) = |g^{-1}h|_w.$$ It is easy to check that $d_w$ is a (finite valued) well-defined metric, which satisfies $d_w(1,h) \leq w(h)$ for all $h \in H$. 

It remains to show that $d_w$ is proper. Let $N \in \mathbb{N}$. Suppose $h \in H$ such that $w(h) \leq N$. If $h \in H_0$, then $\widehat{d}(1, h) \leq N$ which implies that there are finitely many choices for $h$, since $\widehat{d}$ is proper. If $h \notin H_0$, then $h \in F_i$ for some minimal $i$. But each $F_i$ is a finite set, so there are finitely many choices for $h$. Thus $\large| \{ h \in H \mid w(h) \leq N \}\large| < \infty$ for all $N \in \mathbb{N}$. This implies $d_w$ is proper. 

Indeed, if $y \neq 1$ is such that $|y|_w \leq n$, then there exists a word $u$, written without the identity element (which has weight zero), representing $y$ in the alphabet $H$ such that $u = x_1 x_2 ...x_r $ and $\sum_{i =1} ^{r} w(x_i) \leq n$. Since $w(x_i) \geq 1$ for every $x_i \neq 1$, $r \leq n$. Further, $w(x_i) \leq n$ for all $i$. Thus $x_i \in \{ x \in H \mid w(x) \leq n \}$ for all $i$. So there only finitely many choices for each $x_i$, which implies there are finitely many choices for $y$. By definition, $d_w \leq \widehat{d}$. So we can set $\widetilde{d} = d_{w}$.
\end{proof}

\subsection{Acylindrically Hyperbolic Groups}
In the following theorem $\partial$ represents the Gromov boundary. 

\begin{thm} \label{ah} For any group G, the following are equivalent.
\begin{enumerate}
\item[(AH$_1$)] There exists a generating set X of G such that the corresponding Cayley graph $\Gamma$(G,X) is hyperbolic, $|\partial \Gamma(G,X)| \geq 2 $, and the natural action of G on $\Gamma$(G,X) is acylindrical.

\item[(AH$_2$)] G admits a non-elementary acylindrical action on a hyperbolic space.

\item[(AH$_3$)] G contains a proper infinite hyperbolically embedded subgroup.
\end{enumerate}
\end{thm}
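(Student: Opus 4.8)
I would prove the three conditions equivalent by establishing the cycle of implications (AH$_1$)$\,\Rightarrow\,$(AH$_2$)$\,\Rightarrow\,$(AH$_3$)$\,\Rightarrow\,$(AH$_1$), of which only the last requires real work. The implication (AH$_1$)$\,\Rightarrow\,$(AH$_2$) is immediate from the definitions: a Cayley graph is a geodesic metric space, it is hyperbolic and the natural left-multiplication action on it is acylindrical by hypothesis, and the condition on the Gromov boundary guarantees this action is non-elementary.

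For (AH$_2$)$\,\Rightarrow\,$(AH$_3$): suppose $G$ acts non-elementarily and acylindrically on a hyperbolic space. Non-elementarity provides two independent loxodromic elements; fix one of them, say $g$. A standard consequence of acylindricity is that every loxodromic element of $G$, in particular $g$, is a weakly properly discontinuous (WPD) element for this action, since for $y=g^{M}x$ with $M$ large the acylindricity inequality becomes exactly the WPD condition. One then applies the results of \cite{DGO} relating WPD elements to hyperbolically embedded subgroups: the maximal virtually cyclic subgroup $E_G(g)\le G$ containing $g$ satisfies $E_G(g)\hookrightarrow_h G$. Since $g$ has infinite order, $E_G(g)$ is infinite, and since $G$ contains a loxodromic independent from $g$ it is not virtually cyclic, so $E_G(g)\neq G$. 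Hence $E_G(g)$ is a proper infinite hyperbolically embedded subgroup of $G$.

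The main work is (AH$_3$)$\,\Rightarrow\,$(AH$_1$), the ``acylindrification'' step. Let $H\hookrightarrow_h(G,Y)$ with $H$ proper and infinite; we may assume $Y=Y^{-1}$. I would first reduce to the case that $H$ is virtually cyclic, and hence finitely generated: the action of $G$ on $\Gamma(G,Y\sqcup\mathcal H)$ is non-elementary (because $H$ is proper and infinite) and admits a loxodromic WPD element $g$ — both facts are due to \cite{DGO} — so, arguing as in the previous paragraph, $E_G(g)$ is a proper infinite \emph{virtually cyclic} hyperbolically embedded subgroup, with which we may replace $H$. Write $H=\langle S\rangle$ with $S=S^{-1}$ finite, and set $X=Y\cup S$, which generates $G$. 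There are three things to verify. Hyperbolicity of $\Gamma(G,X)$ follows from the combination theorems of \cite{DGO}: passing from $\Gamma(G,Y\sqcup\mathcal H)$ to $\Gamma(G,X)$ replaces each coset of $H$ — a clique in the former graph — by a copy of the hyperbolic space $\Gamma(H,S)$, and since $H$ is hyperbolically embedded this replacement of coned-off peripheral pieces by hyperbolic ones preserves hyperbolicity (the analogue, in the hyperbolically-embedded setting, of the fact that a group hyperbolic relative to hyperbolic subgroups is itself hyperbolic). The condition on the boundary holds because the $G$-equivariant Lipschitz map $\Gamma(G,X)\to\Gamma(G,Y\sqcup\mathcal H)$ lets one pull the two independent loxodromics back up to $\Gamma(G,X)$, so its limit set is in fact infinite.

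The remaining and decisive point is that the action of $G$ on $\Gamma(G,X)$ is acylindrical. Here one fixes $\e>0$ and must find $R,N>0$ so that whenever $d_X(1,g)\ge R$ there are at most $N$ elements $f\in G$ with $d_X(1,f)\le\e$ and $d_X(g,fg)\le\e$. The plan is to lift such a configuration to the Cayley graph $\Gamma(G,Y\sqcup\mathcal H)$, where one has at one's disposal the geometry of geodesic $n$-gons, $H_\lambda$-components and their isolation, and Lemma \ref{isolated}, and then to play the two generating sets off against each other: using hyperbolicity, a long $X$-geodesic $[1,g]$ forces any such $f$ either to lie in a bounded word-ball of $(H,S)$ — of which there are finitely many since $S$ is finite — or to interact with a geodesic side of a controlled polygon through an isolated component, in which case Lemma \ref{isolated} together with the properness of the relative metrics $\widehat{d}_\lambda$ again leaves only finitely many possibilities; assembling these bounds yields acylindricity. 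I expect this verification to be the main obstacle, since it demands a careful simultaneous analysis of geodesics with respect to the two generating sets $Y\cup S$ and $Y\sqcup\mathcal H$ and delicate bookkeeping of components and cosets; the other implications are either formal or reduce quickly to cited structural facts about hyperbolically embedded subgroups.
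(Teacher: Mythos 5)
The paper itself does not reprove this theorem: it records (AH$_1$)$\Rightarrow$(AH$_2$) as immediate from the definitions and simply cites \cite{DGO} for (AH$_2$)$\Rightarrow$(AH$_3$) and \cite{ah} for (AH$_3$)$\Rightarrow$(AH$_1$). Your first two implications are fine and follow the standard/cited route: acylindricity makes loxodromic elements WPD, a non-elementary acylindrical action has independent loxodromics, and then $E_G(g)\hookrightarrow_h G$ by \cite{DGO}, with $E_G(g)$ infinite and proper. No complaints there.

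The genuine gap is in (AH$_3$)$\Rightarrow$(AH$_1$), which is exactly the part where all the work lives. Two problems. First, your setup deviates from the argument of \cite{ah} in a way that is not merely cosmetic: Osin does not ``un-cone'' the cosets of a virtually cyclic $E_G(g)$ and work with $X=Y\cup S$ for a finite $S$; he keeps the whole subgroup in the generating set and proves (Theorem 5.4 of \cite{ah}, quoted here as Theorem \ref{DO}) that one can \emph{enlarge} the relative generating set $Y$ to some $Y'\supseteq Y$ so that the action on $\Gamma(G,Y'\sqcup\mathcal H)$ becomes acylindrical; non-elementarity of that action is Lemma 5.12 of \cite{ah}. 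Your proposal performs no enlargement at all, and this matters: acylindricity of the action on the relative Cayley graph genuinely fails in general for the original $Y$ --- the whole point of the ``acylindrification'' is the careful choice of new generators to be added --- so the same failure must be expected for $Y\cup S$. Second, and decisively, the acylindricity verification in your sketch is not an argument but a declaration of intent (``lift the configuration\dots assembling these bounds yields acylindricity''); it names the tools (Lemma \ref{isolated}, properness of $\widehat d_\lambda$) but contains no mechanism producing the uniform constants $R,N$ for a given $\e$, which is precisely the content of Osin's Theorem 5.4 and cannot be waved through. (Your intermediate claims --- hyperbolicity of $\Gamma(G,Y\cup S)$ via a ``combination theorem'' and pulling loxodromics back to see $|\partial\Gamma(G,X)|\ge 2$ --- would also need actual citations or proofs, but they are secondary to the missing acylindricity step.) As written, the hardest implication is asserted rather than proved.
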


It follows from the definitions that $(AH_1) \Rightarrow (AH_2)$. The implication
$(AH_2) \Rightarrow (AH_3)$  is non-trivial and was proved in \cite{DGO}. The implication $(AH_3) \Rightarrow (AH_1)$ was proved in \cite{ah}.

\begin{defn} We call a group G \textit{acylindrically hyperbolic} if it satisfies any of the equivalent conditions (AH$_1$)-(AH$_3$) from Theorem \ref{ah}. \end{defn}

\begin{lem}[\cite{DGO}, Corollary 4.27] \label{symmdiff} Let G be a group, $\{ H_\lambda\}_{\lambda \in \Lambda}$ a collection of subgroups of G, and $X_1$ and $X_2$ be relative generating sets. Suppose that $| X_1 \Delta X_2| < \infty$. Then  $\{ H_\lambda\}_{\lambda \in \Lambda} \hookrightarrow_h (G, X_1)$ if and only if  $\{ H_\lambda\}_{\lambda \in \Lambda} \hookrightarrow_h (G, X_2)$.
\end{lem}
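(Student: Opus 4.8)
\emph{Proof strategy.} The statement is symmetric in $X_1$ and $X_2$, and $X_1\cup X_2$ is again a relative generating set (because $X_1$ is), containing each $X_i$ and differing from it by finitely many elements; so, applying the case of an inclusion twice, it suffices to treat the situation $X\subseteq Y$ with $F:=Y\setminus X$ finite and $X$ (hence $Y$) a relative generating set. Fix for each $g\in F$ a word $w_g$ in the alphabet $X\sqcup\mathcal H$ representing $g$, and set $L:=\max_{g\in F}|w_g|<\infty$. Condition (a) for $Y$ is immediate from condition (a) for $X$. For condition (b), write $\Gamma_1:=\Gamma(G,X\sqcup\mathcal H)$ and $\Gamma_2:=\Gamma(G,Y\sqcup\mathcal H)$: the edge set of $\Gamma_1$ is contained in that of $\Gamma_2$, so the identity map on $G$ satisfies $d_{\Gamma_2}\le d_{\Gamma_1}$; and every $g^{\pm1}$-edge of $\Gamma_2$ ($g\in F$) can be replaced by a $\Gamma_1$-path of length $\le L$ spelling $w_g^{\pm1}$, while all other edges of $\Gamma_2$ already lie in $\Gamma_1$, so $d_{\Gamma_1}\le L\,d_{\Gamma_2}$. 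Thus the identity is a quasi-isometry $\Gamma_1\to\Gamma_2$, and since Gromov hyperbolicity is a quasi-isometry invariant of geodesic spaces, (b) holds for $Y$ if and only if it holds for $X$.

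It remains to handle condition (c); write $\widehat d^{\,(i)}_\lambda$ for the relative metric on $H_\lambda$ computed inside $\Gamma_i$. One implication is easy: any $\lambda$-admissible path in $\Gamma_1$ is also a $\lambda$-admissible path in $\Gamma_2$ of the same length (adjoining the edges of $F$ alters neither the subgraph $\Gamma(H_\lambda,H_\lambda)$ nor path lengths), so $\widehat d^{\,(2)}_\lambda\le\widehat d^{\,(1)}_\lambda$; hence every $\widehat d^{\,(1)}_\lambda$-ball lies inside the corresponding $\widehat d^{\,(2)}_\lambda$-ball, and properness of $\widehat d^{\,(2)}_\lambda$ forces properness of $\widehat d^{\,(1)}_\lambda$. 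This gives $\{H_\lambda\}\hookrightarrow_h(G,Y)\Rightarrow\{H_\lambda\}\hookrightarrow_h(G,X)$.

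The substantial direction is to show that if $\{H_\lambda\}\hookrightarrow_h(G,X)$ then $(H_\lambda,\widehat d^{\,(2)}_\lambda)$ is proper for every $\lambda$. Note that $\widehat d^{\,(2)}_\lambda$ may be finite where $\widehat d^{\,(1)}_\lambda$ is infinite (e.g. $G=\langle a\rangle * \langle b\rangle$, $H=\langle a\rangle$, $X=\{b\}$, $Y=\{b,a^2\}$), so one cannot compare balls and must bound cardinalities directly. Let $C$ be the constant of Lemma~\ref{isolated} for $\Gamma_1$ and let $S\subset H_\lambda$ be the finite symmetric set of $H_\lambda$-letters occurring in the words $w_g^{\pm1}$, $g\in F$. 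Given $h\in H_\lambda$ with $\widehat d^{\,(2)}_\lambda(1,h)=n$, take a geodesic $\lambda$-admissible path $q$ from $1$ to $h$ in $\Gamma_2$, expand each $g^{\pm1}$-edge of $q$ into the fixed word $w_g^{\pm1}$ to obtain a path $q'$ from $1$ to $h$ in $\Gamma_1$ of length $\le(L+1)n$, and close it with the $H_\lambda$-edge from $h$ to $1$ to form a loop $P$. By construction all $H_\lambda$-edges of $q'$ carry labels in $S$; writing $q'=\Theta_0\sigma_1\Theta_1\cdots\sigma_p\Theta_p$ with $\sigma_i\in S$ and the $\Theta_i$ free of $H_\lambda$-letters, we get $p\le(L+1)n$ and $\sum_i|\Theta_i|\le(L+1)n$. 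Replacing each $\Theta_i$ by a $\Gamma_1$-geodesic between its endpoints turns $P$ into a geodesic $N$-gon $P'$ with $N\le 2(L+1)n+2$. Now unwind $P'$ with Lemma~\ref{isolated}: an isolated $H_\lambda$-component has endpoints at $\widehat d^{\,(1)}_\lambda$-distance $\le CN$, while two connected $H_\lambda$-components can be excised, keeping the side count $O(n)$ and decreasing their number. This expresses $h$ as a product of $O(n)$ elements drawn from the finite set $S\cup\{h'\in H_\lambda:\widehat d^{\,(1)}_\lambda(1,h')\le CN\}$ (finite, by properness of $\widehat d^{\,(1)}_\lambda$), so $h$ ranges over a finite set depending only on $n$; this is the desired properness.

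I expect the main obstacle to be this last step: the geodesic sides of $P'$ may hide $H_\lambda$-edges whose labels need not lie in $S$, and one must keep track of which of the various $H_\lambda$-components of $P'$ are mutually connected while keeping the polygon's complexity linear in $n$ throughout the excision process. This coset-geometry bookkeeping is precisely what underlies Lemma~\ref{isolated}, and it is carried out in detail in \cite{DGO}, §4, from which the present lemma is \cite{DGO}, Corollary 4.27.
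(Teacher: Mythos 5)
First, note that the paper does not prove this lemma at all: it is quoted verbatim from \cite{DGO} (Corollary 4.27), so your argument can only be judged on its own merits. Your reduction to the case $X\subseteq Y$ with $Y\setminus X$ finite, the quasi-isometry argument for hyperbolicity, and the easy direction ($\widehat d^{\,(2)}_\lambda\le\widehat d^{\,(1)}_\lambda$, hence properness passes from $Y$ to $X$) are all correct. The genuine gap is in the substantial direction, and it occurs exactly where you place the weight of the proof. The claim ``by construction all $H_\lambda$-edges of $q'$ carry labels in $S$'' is false: admissibility of $q$ only forbids edges of the subgraph $\Gamma(H_\lambda,H_\lambda)$, not $H_\lambda$-letters altogether, so $q$ (hence $q'$) may contain $H_\lambda$-components lying in cosets $gH_\lambda\neq H_\lambda$ whose labels are arbitrary elements of $H_\lambda$ --- the paper's own example $G=H\times\mathbb{Z}$ exhibits precisely such an $H$-component on a shortest admissible path. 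Consequently the decomposition $q'=\Theta_0\sigma_1\Theta_1\cdots\sigma_p\Theta_p$ with all $\sigma_i\in S$ and the $\Theta_i$ free of $H_\lambda$-letters does not exist in general, and the factor set in your final product is no longer finite. The subsequent ``unwinding'' of the geodesic polygon via Lemma \ref{isolated} is only sketched, and you yourself concede that the bookkeeping is not done and refer back to \cite{DGO} --- i.e., to the very source being cited --- so as a self-contained proof the key step is missing.

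The step can be repaired, and in fact more simply than your polygon/excision scheme, because the relevant dichotomy is not ``$H_\lambda$-letter or not'' but ``edge of $\Gamma(H_\lambda,H_\lambda)$ or not.'' After rewriting, $q'$ is a path in $\Gamma_1$ from $1$ to $h$ of length at most $Ln$; cut it at its successive visits $1=u_0,u_1,\dots,u_k=h$ to the subgroup $H_\lambda$ (so $k\le Ln$), and write $h=\prod_{j}(u_{j-1}^{-1}u_j)$. Between two consecutive visits the subpath either is a single edge of $\Gamma(H_\lambda,H_\lambda)$ --- which cannot come from $q$ (admissibility) and hence comes from an inserted word $w_g$, so its label lies in the finite set $S$ --- or contains no edge of $\Gamma(H_\lambda,H_\lambda)$ at all, hence is a $\lambda$-admissible path in $\Gamma_1$, giving $\widehat d^{\,(1)}_\lambda(u_{j-1},u_j)\le Ln$. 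Thus $h$ is a product of at most $Ln+1$ elements of the finite set $S\cup\{h'\in H_\lambda:\widehat d^{\,(1)}_\lambda(1,h')\le Ln\}$ (finite by properness of $\widehat d^{\,(1)}_\lambda$), so the $\widehat d^{\,(2)}_\lambda$-ball of radius $n$ is finite. No geodesic polygons, no Lemma \ref{isolated}, and no hyperbolicity are needed for this part; with this replacement your outline becomes a complete proof.
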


\begin{thm}[\cite{ah}, Theorem 5.4] \label{DO}  Let G be a group, $ \{ H_\lambda \}_{\lambda \in \Lambda}$ a finite collection of subgroups of G, X a subset of G. Suppose that $ \{ H_\lambda \}_{\lambda \in \Lambda} \hookrightarrow_{h} (G,X)$. Then there exists $Y \subset G$ such that the following conditions hold.
\begin{enumerate}
\item[(a)] $X \subset Y$
\item[(b)] $ \{ H_\lambda \}_{\lambda \in \Lambda} \hookrightarrow_{h} (G,Y)$. In particular, the Cayley graph $\Gamma(G, Y \sqcup \mathcal{H})$ is hyperbolic.
\item[(c)] The action of G on $\Gamma (G, Y \sqcup \mathcal{H})$ is acylindrical.
\end{enumerate}
\end{thm}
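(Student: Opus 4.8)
\emph{Proof proposal.} The plan is to keep the peripheral family $\{H_\lambda\}$ fixed and enlarge the relative generating set, replacing $X$ by a set $Y = X \cup S$ chosen so that the $G$-action on $\Gamma(G,Y\sqcup\mathcal H)$ becomes acylindrical. With this setup (a) is immediate, and (b) should come essentially for free: if $S$ can be taken finite then $|Y\Delta X| < \infty$ and Lemma \ref{symmdiff} gives $\{H_\lambda\}\hookrightarrow_h(G,Y)$ at once; if one is forced to take $S$ infinite, one instead verifies directly that the relative metrics $\widehat d_\lambda$ computed in $\Gamma(G,Y\sqcup\mathcal H)$ stay proper, by expanding each $S$-edge of a shortest $\lambda$-admissible path into a bounded-length $X\sqcup\mathcal H$-path realising the corresponding relative distance and applying Lemma \ref{isolated} to the resulting geodesic polygon to control $\widehat d_\lambda$ relative to $X$ linearly in $\widehat d_\lambda$ relative to $Y$. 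Either way (b) follows, and with it hyperbolicity of $\Gamma(G,Y\sqcup\mathcal H)$; so the entire content is (c), and the only real freedom is the choice of $S$.

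For (c) we may, by homogeneity of the action, test acylindricity with basepoint $x = 1$: given $\varepsilon > 0$ we must bound the number of $g$ with $d_Y(1,g)\le\varepsilon$ and $d_Y(y,gy)\le\varepsilon$ for $y$ with $d_Y(1,y)$ large, where $d_Y$ is the metric of $\Gamma(G,Y\sqcup\mathcal H)$, hyperbolic with some constant $\delta$. A standard thin-quadrilateral argument applied to $1,y,gy,g$ produces a constant $\varepsilon' = \varepsilon'(\varepsilon,\delta)$ and, inside a fixed geodesic $[1,y]$, a sub-geodesic $\gamma$ of length comparable to $d_Y(1,y)$, beginning at bounded distance from $1$, every point of which is moved by $g$ at most $\varepsilon'$. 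So it suffices to bound, in terms of $\varepsilon'$ alone, the number of $g$ at distance $\le\varepsilon$ from $1$ that coarsely fix this one long geodesic $\gamma$. Now I would analyse $\gamma$ relative to the peripheral cosets $\{fH_\lambda\}$: since such cosets have diameter $\le 1$ in $\Gamma(G,Y\sqcup\mathcal H)$ and a geodesic meets any one of them in at most an edge, one can fix a threshold $D$ and split into the case where $\gamma$ penetrates some $fH_\lambda$ to $\widehat d_\lambda$-depth $> D$, and the case where some sub-geodesic $\gamma_0\subseteq\gamma$ of definite length stays at distance $> D$ from every peripheral coset.

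The deep-penetration case is the favourable one, and it is where hyperbolic embeddedness earns its keep: if $g$ coarsely fixes a geodesic entering $fH_\lambda$ to relative depth $> D$, then piecing together the short segments of $\gamma$ flanking the clique $fH_\lambda$, the clique itself, and the $\varepsilon'$-fellow-travelling provided by $g$ yields a bounded-length path witnessing that $f^{-1}gf$, up to left-translation inside $H_\lambda$, lies in $H_\lambda$ at $\widehat d_\lambda$-distance from $1$ bounded in terms of $\varepsilon'$ and $D$; as $\widehat d_\lambda$ is proper by (b) and $\Lambda$ is finite, only boundedly many $g$ occur. The remaining case — a long geodesic staying uniformly far from every peripheral coset but coarsely fixed by many $g$ near $1$ — is precisely the obstruction to acylindricity of the original action, and eliminating it is the job of $S$: one must choose $S$ so that, in $\Gamma(G,Y\sqcup\mathcal H)$, any pair of points spanning such a configuration of the relevant length is brought within distance less than the chosen $R$, so that it no longer contributes to the acylindricity inequality, while $S$ remains tame enough for (b) to survive. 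Interlocking these two cases — fixing $S$, the threshold $D$, and the constants $R$ and $N$ so that the two sub-counts cover all possibilities and add up — is the technical heart of the theorem, and the step I expect to be by far the most delicate is exactly this engineering of $S$: killing the non-peripheral obstruction without destroying properness of the relative metrics. Taking $N$ to be the sum of the two bounds then completes (c), and hence the theorem.
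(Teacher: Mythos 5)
First, a point of comparison: the paper does not prove this statement at all --- Theorem \ref{DO} is quoted verbatim from \cite{ah} (Osin, Theorem 5.4) and used as a black box, so there is no internal proof to measure your attempt against; your proposal has to stand or fall on its own. On its own it has a genuine gap: the set $S$ (equivalently $Y$) is never constructed, and constructing it \emph{is} the theorem. Your case analysis correctly isolates the obstruction --- long geodesics staying far from every peripheral coset yet coarsely fixed by many elements near the basepoint --- but the proposal then simply stipulates that $S$ be chosen to kill this configuration while preserving (b), which is precisely the ``technical heart'' you acknowledge and do not supply. A sketch that reduces the statement to ``choose $S$ so that the hard case disappears and properness survives'' has not proved anything, because the interaction between those two requirements is exactly where the difficulty lives.

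Moreover, both fallback mechanisms you offer for (b) are problematic. If $S$ is finite, or more generally if every element of $S$ has uniformly bounded length in $X\sqcup\mathcal H$ (which is what your ``expand each $S$-edge into a bounded-length $X\sqcup\mathcal H$-path'' argument would require), then the identity map is a $G$-equivariant quasi-isometry between $\Gamma(G,Y\sqcup\mathcal H)$ and $\Gamma(G,X\sqcup\mathcal H)$, and acylindricity is preserved under equivariant quasi-isometries; so such an $S$ can never repair acylindricity unless the original action already had it. Hence the realistic branch is an infinite, metrically unbounded $S$, and there your sketch addresses only properness of the relative metrics $\widehat{d}_\lambda$ and says nothing about hyperbolicity of the enlarged graph, which is part of condition (b) and is not automatic when infinitely many edges are added. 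In the actual proof in \cite{ah} this is exactly the delicate point: $Y$ is built so that $\Gamma(G,Y\sqcup\mathcal H)$ arises from $\Gamma(G,X\sqcup\mathcal H)$ by adding edges satisfying a Bowditch-type gluing condition (the hypothesis of Theorem \ref{B} with $M=1$, a fact the present paper exploits in Step 2 of Section \ref{constr}), which simultaneously yields hyperbolicity and the geodesic-comparison estimates needed to verify acylindricity and the properness of the $\widehat{d}_\lambda$. Your deep-penetration argument is a reasonable ingredient, but without an explicit $Y$ and these verifications the proposal is an outline of the difficulty rather than a proof.
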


\begin{defn} Let $(X,d_X)$ and $(Y, d_Y)$ be two metric spaces. A map $\phi\colon X \rightarrow Y$ is said to be a \textit{($\lambda$,C)-quasi-isometry} if there exist  constants $\lambda > 1, C >0$  such that \begin{enumerate}
\item[(a)] $\frac{1}{\lambda}d_X(a,b) - C \leq d_Y(\phi(a), \phi(b)) \leq \lambda d_X(a,b) + C$, for all $ a,b \in X$ and 
\item[(b)] $Y$ is contained in the $C$-neighborhood of $\phi(X)$. 
\end{enumerate}
The spaces $X$ and $Y$ are said to be \textit{quasi-isometric} if such a map $\phi\colon X \rightarrow Y$ exists. It is easy to check that being quasi-isometric is an equivalence relation. If the map $\phi$ satisfies only condition (a), then it is said to be a \textit{($\lambda$,C)-quasi-isometric embedding}.\end{defn}

\begin{defn} A graph $\Gamma$ with the combinatorial metric $d_\Gamma$ is said to be a \textit{quasi-tree}  if it is quasi-isometric to a tree $T$. 
\end{defn}

\begin{defn} A \textit{quasi-geodesic} is a quasi-isometric embedding of an interval (bounded or unbounded) $I \subseteq \mathbb{R}$ into a metric space $X$. Note that geodesics are $(1,0)$-quasi-geodesics. By slight abuse of notation, we may identify the map that defines a quasi-geodesic with its image in the space. 
\end{defn}

\begin{thm}[\cite{JFM}, Theorem 4.6, Bottleneck property] \label{bottleneck} Let Y be a geodesic metric space. The following are equivalent.
\begin{enumerate}
\item[(a)] Y is quasi-isometric to some simplicial  tree $\Gamma$
\item[(b)] There is some $\mu > 0$ so that for all $x, y \in$ Y, there is a midpoint $m=m(x,y)$ with d(x,m) = d(y,m) = $\frac{1}{2}$d(x,y) and the property that any path from x to y must pass within less than $\mu$ of the point m.
\end{enumerate}
\end{thm}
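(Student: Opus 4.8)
The plan is to treat this (Manning's bottleneck criterion) in the usual two steps: $(a)\Rightarrow(b)$ by transporting the bottleneck property of a tree across a quasi-isometry, and $(b)\Rightarrow(a)$, the substantial direction, by first extracting hyperbolicity from $(b)$ and then building a simplicial tree out of a ``fan of geodesics'' whose global consistency is enforced by $(b)$.

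For $(a)\Rightarrow(b)$ I would fix a $(\lambda,C)$-quasi-isometry $\phi\colon Y\to\Gamma$ with coarse inverse $\psi$. Since a simplicial tree is $0$-hyperbolic and hyperbolicity is a quasi-isometry invariant, $Y$ is $\delta$-hyperbolic for some $\delta=\delta(\lambda,C)$. Given $x,y\in Y$, let $m'$ be the midpoint of the geodesic $[\phi(x),\phi(y)]$ in $\Gamma$; the quasi-isometry inequalities together with hyperbolicity of $Y$ show that the metric midpoint $m$ of any geodesic $[x,y]$ in $Y$ lies within a constant $\mu_0=\mu_0(\lambda,C,\delta)$ of $\psi(m')$. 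For an arbitrary path $p$ from $x$ to $y$ in $Y$, I would sample points $x=p_0,p_1,\dots,p_k=y$ along $p$ with $d(p_i,p_{i+1})\le 1$ and concatenate the geodesics $[\phi(p_i),\phi(p_{i+1})]$ into an edge-path in the tree $\Gamma$ from $\phi(x)$ to $\phi(y)$; any such path contains the geodesic $[\phi(x),\phi(y)]$ and hence passes through $m'$, so some $\phi(p_i)$ is within $\tfrac12(\lambda+C)+1$ of $m'$; pulling this back through $\psi$ and combining with the estimate above shows $p_i$ is within some $\mu=\mu(\lambda,C,\delta)$ of $m$, which is exactly $(b)$.

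For $(b)\Rightarrow(a)$ the first step is to show $Y$ is hyperbolic. Given a geodesic triangle with sides $[x,y],[y,z],[z,x]$ and a point $w\in[x,y]$, I may assume $a:=d(x,w)\le d(x,y)/2$ and choose $w'\in[x,y]$ with $d(x,w')=2a$, so that $w$ is a metric midpoint of $x$ and $w'$. The concatenation of $[x,z]$, $[z,y]$ and the subsegment of $[x,y]$ from $y$ to $w'$ is a path from $x$ to $w'$, so by $(b)$ it comes within $\mu$ of $w$; since every point of the subsegment from $y$ to $w'$ is at distance at least $a$ from $w$, the witnessing point lies on $[x,z]\cup[z,y]$ unless $a<\mu$. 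Either way $w$ is within $\mu$ of $[x,z]\cup[z,y]\cup\{x\}$, so geodesic triangles are $\mu$-slim and $Y$ is $\delta$-hyperbolic with $\delta=O(\mu)$.

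For the second step I would fix $x_0\in Y$ and a geodesic $\gamma_y=[x_0,y]$ for every $y$, then form the fan of geodesics: glue $\gamma_y$ to $\gamma_{y'}$ along the initial segment on which they $O(\delta)$-fellow-travel (of length $\approx(y\mid y')_{x_0}$ by hyperbolicity), pass to the transitive closure of these identifications, and discretize by a maximal $\mu$-separated net to obtain a connected simplicial graph $T$ with a coarsely surjective, coarsely Lipschitz map $Y\to T$, for which $d_T\lesssim d_Y/\mu$ is immediate. The hard part will be the reverse inequality $d_Y\gtrsim\mu\,d_T$, i.e.\ that the transitive closure is not much coarser than the relation it closes, and this is precisely where $(b)$ is needed beyond mere hyperbolicity: if $\gamma_u$ and $\gamma_w$ diverge at time $g\approx(u\mid w)_{x_0}$ but get identified in $T$ through a chain of intermediate geodesics at some much larger time $t$, then unwinding the chain produces points $q_0,\dots,q_k$ with $q_0$ near $\gamma_u(t)$, $q_k$ near $\gamma_w(t)$, each within $O(\mu)$ of radius $t$ about $x_0$, and $d(q_j,q_{j+1})=O(\mu)$; hence the concatenation of the short geodesics $[q_j,q_{j+1}]$ together with the tails $\gamma_u|_{[t,L]}$ and $\gamma_w|_{[t,L]}$ is a path from $a:=\gamma_u(L)$ to $b:=\gamma_w(L)$ staying outside $B(x_0,t-O(\mu))$, whereas for $L\gg t$ hyperbolicity forces every metric midpoint of $a$ and $b$ to lie within $O(\delta)$ of radius $g$ about $x_0$ — so once $t-g$ exceeds a suitable multiple of $\mu$ the path avoids the $\mu$-ball about every such midpoint, contradicting $(b)$. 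Thus the transitive closure only enlarges each identification time by $O(1)$, which I would then leverage to conclude that $Y$ is quasi-isometric to a simplicial tree. I expect the two genuine obstacles to be tracking all the $O(\mu)$ and $O(\delta)$ constants through this chain argument, and passing from the resulting ``almost-tree'' $T$ to an honest simplicial tree; by contrast Step 1 and the direction $(a)\Rightarrow(b)$ should be routine.
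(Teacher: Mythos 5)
The paper itself contains no proof of Theorem \ref{bottleneck}: it is imported wholesale from \cite{JFM}, and the only related argument the paper supplies is Lemma \ref{bpr}, a strengthening of the direction (a)$\Rightarrow$(b) (from the midpoint to an arbitrary point of a geodesic) proved by transporting paths through a quasi-isometry and invoking quasi-geodesic stability (Lemma \ref{bottlenecklemma}). Your (a)$\Rightarrow$(b) argument is essentially that same transport argument and is fine, so there is nothing in the paper to compare your hard direction against; it has to be judged on its own.

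Your plan for (b)$\Rightarrow$(a) has the right shape: bottleneck forces slim triangles, and the tree is the quotient of the fan of geodesics from a basepoint glued along fellow-travelling initial segments, with (b) invoked exactly where it must be, namely to prevent the transitive closure from identifying points at radii much larger than the pairwise Gromov products (this is precisely what fails in the hyperbolic plane, so no purely hyperbolic argument could do it). But as written it is a plan rather than a proof, and there are concrete soft spots beyond constant-chasing. First, (b) supplies a bottleneck only at one designated midpoint $m(x,y)$, while both your slim-triangle argument and your chain argument apply it to an arbitrary metric midpoint; you need the (easy) observation that any metric midpoint of $x,y$ lies within $2\mu$ of $m(x,y)$, obtained by running the path through the other midpoint. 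Second, the inequality you call the hard part, $d_Y\geq \mathrm{const}\cdot\mu\,d_T$, is literally the same as the one you call immediate; what you actually need is the opposite bound $d_T\geq \mathrm{const}\cdot d_Y/\mu-\mathrm{const}$, i.e.\ non-collapse, which is indeed what your chain-unwinding argument is aimed at, so this is a slip of notation rather than of strategy. Third, that argument assumes you can cut both geodesics at a common radius $L\gg t$; for a pair $u,w$ with $d(x_0,u)$ and $d(x_0,w)$ very different, the midpoint of $[u,w]$ sits at radius roughly $(u\mid w)_{x_0}+|d(x_0,u)-d(x_0,w)|/2$, so the contradiction only bounds the identification time by that larger quantity --- you must either truncate to equal radii (using that the chosen geodesic of $\gamma_w(L)$ fellow-travels $\gamma_w$, at the cost of $O(\delta)$) or absorb the extra term via $T^*\le\min$ of the two lengths; it works, but it is not automatic. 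Finally, statement (a) demands a simplicial tree, and the passage from your glued-up real tree (or net graph) to an honest simplicial tree quasi-isometric to $Y$ is left entirely to ``leverage''; this is doable but is a genuine remaining step, not bookkeeping.
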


We remark that if $m$ is replaced with any point $p$ on a geodesic between $x$ and $y$, then the property that any path from $x$ to $y$ passes within less than $\mu$ of the point $p$ still follows from (a), as proved below in Lemma \ref{bpr}.  We will need the following lemma. 

\begin{lem}\label{bottlenecklemma}[\cite{bottlenecklemma}, Proposition 3.1] For all $\lambda \geq 1, C \geq 0, \delta \geq 0$, there exists an $R= R(\delta, \lambda, C)$ such that if $X$ is a $\delta$-hyperbolic space, $\gamma$ is a $(\lambda, C)$-quasi-geodesic in $X$, and $\gamma '$ is a geodesic segment with the same end points, then  $\gamma '$ and $\gamma$ are Hausdorff distance less than $R$ from each other. \end{lem}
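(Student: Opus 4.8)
The statement is the classical stability theorem for quasi-geodesics (the Morse lemma) in $\delta$-hyperbolic spaces, and the plan is to reproduce its standard proof. We may assume $X$ is geodesic, which is the only case we need (and which is implicit in the hypothesis that the geodesic $\gamma'$ exists). The argument has three stages: first \emph{tame} $\gamma$, then show that $\gamma'$ lies in a bounded neighbourhood of $\gamma$, and finally show that $\gamma$ lies in a bounded neighbourhood of $\gamma'$. For the taming step, given $\gamma\colon[a,b]\to X$ I would build a continuous path $\gamma^{\ast}$ with the same endpoints by joining consecutive points among $\gamma(a),\gamma(\lceil a\rceil),\dots,\gamma(\lfloor b\rfloor),\gamma(b)$ by geodesic segments. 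Routine estimates, using only that $\gamma$ is a $(\lambda,C)$-quasi-geodesic, show that $\gamma$ and $\gamma^{\ast}$ have Hausdorff distance at most $2(\lambda+C)$, that $\gamma^{\ast}$ is again a $(\lambda',C')$-quasi-geodesic with $\lambda',C'$ depending only on $\lambda,C$, and that $\gamma^{\ast}$ is \emph{quasi-minimizing}: the length of the subpath of $\gamma^{\ast}$ between any two of its points $u,v$ is at most $k_1 d(u,v)+k_2$, with $k_1,k_2$ depending only on $\lambda,C$. It thus suffices to prove the lemma with $\gamma^{\ast}$ in place of $\gamma$, and I rename $\gamma^{\ast}$ as $\gamma$.

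The engine of the second stage is the elementary subdivision estimate: if $X$ is geodesic and $\delta$-hyperbolic, $\sigma$ is a rectifiable path from $x$ to $y$, and $[x,y]$ is a geodesic, then every point of $[x,y]$ lies within $\delta\log_2\ell(\sigma)+1$ of (the image of) $\sigma$; this follows by repeatedly halving $\sigma$ by arclength and using $\delta$-thinness of the triangles that appear. Now write $\gamma'=[p,q]$, let $D=\sup_{x\in[p,q]}d(x,\gamma)$, attained (up to an arbitrarily small error) at some $x_0\in[p,q]$, and choose $y,z\in[p,q]$ at distance $2D$ from $x_0$ on the two sides (or the endpoints of $[p,q]$ if these are nearer), together with points $y',z'$ on $\gamma$ within $D$ of $y,z$. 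Concatenating the geodesic $[y,y']$, the subpath of $\gamma$ from $y'$ to $z'$, and the geodesic $[z',z]$ produces a path $\sigma$ from $y$ to $z$ contained in the $D$-neighbourhood of $\gamma$ and, by quasi-minimality, of length at most $(2+6k_1)D+k_2$. Applying the subdivision estimate to $\sigma$ and the point $x_0\in[y,z]$, and distinguishing whether the point of $\sigma$ nearest $x_0$ lies on the central arc or on one of the two geodesic pieces, one gets in each case $D\le\delta\log_2\!\big((2+6k_1)D+k_2\big)+1$. Since the left side is linear and the right side only logarithmic in $D$, this forces $D\le D_0$ for a constant $D_0=D_0(\delta,\lambda,C)$; hence $[p,q]$ lies in the $D_0$-neighbourhood of $\gamma$.

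For the third stage, fix a point $w=\gamma(t)$ and set $t_1=\sup\{s\le t:d(\gamma(s),[p,q])\le D_0\}$ and $t_2=\inf\{s\ge t:d(\gamma(s),[p,q])\le D_0\}$; these exist because $\gamma(a)=p$ and $\gamma(b)=q$, and they satisfy $d(\gamma(t_i),[p,q])\le D_0$ while $\gamma(s)$ is more than $D_0$ from $[p,q]$ for every $s\in(t_1,t_2)$. Let $u_1,u_2\in[p,q]$ be points nearest $\gamma(t_1),\gamma(t_2)$. Since $[p,q]$ lies in the $D_0$-neighbourhood of $\gamma$, every point of the subsegment $[u_1,u_2]$ is $D_0$-close to $\gamma(s)$ for some $s$ with $s\le t_1$ or $s\ge t_2$; walking along $[u_1,u_2]$ in unit steps and applying the lower quasi-geodesic inequality then bounds $t_2-t_1$ by a constant depending only on $\delta,\lambda,C$. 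Quasi-minimality now bounds $\ell(\gamma|_{[t_1,t_2]})$, hence $d(w,\gamma(t_1))$, hence $d(w,[p,q])$, by some $R_1=R_1(\delta,\lambda,C)$; so $\gamma$ lies in the $R_1$-neighbourhood of $[p,q]$. Combining the two inclusions and adding $2(\lambda+C)$ back in to undo the taming, the lemma holds with $R=\max\{D_0,R_1\}+2(\lambda+C)+1$.

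I expect the second stage to be the main obstacle: it is the only place $\delta$-hyperbolicity is genuinely used, and one must set up the comparison path $\sigma$ and the subsequent case analysis carefully so that the ``linear beats logarithmic'' collapse actually produces a bound on $D$ that does not depend on $\gamma$ — precisely the step that breaks down for quasi-geodesics in non-hyperbolic spaces. The taming step and the third stage involve several routine estimates but are otherwise just bookkeeping.
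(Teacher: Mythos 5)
Your proposal is correct: it is the standard stability-of-quasi-geodesics (Morse lemma) argument -- taming, the logarithmic subdivision estimate forcing the geodesic into a bounded neighbourhood of the quasi-geodesic, then the excursion/connectedness argument for the reverse inclusion -- with the key estimates set up properly. The paper does not prove this lemma at all; it simply cites it from Bridson--Haefliger, and your argument is essentially the proof given in that reference, so there is nothing to reconcile.
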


\begin{lem}\label{bpr} If Y is a quasi-tree, then there exists $\mu>0 $ such that for any point $z$ on a geodesic connecting two points, any other path between the same end points passes within $\mu$ of $z$. \end{lem}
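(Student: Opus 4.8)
The plan is to combine Theorem~\ref{bottleneck} with Lemma~\ref{bottlenecklemma}, using the standard trick of replacing an arbitrary point $z$ on a geodesic by the midpoint of a suitable \emph{sub}segment. Since $Y$ is a quasi-tree, fix $\mu_0>0$ as in part~(b) of Theorem~\ref{bottleneck} applied to $Y$, and fix a $(\lambda,C)$-quasi-isometry between $Y$ and a simplicial tree $T$; in particular $Y$ is $\delta$-hyperbolic for some $\delta$ (trees are $0$-hyperbolic and hyperbolicity is a quasi-isometry invariant of geodesic spaces), so Lemma~\ref{bottlenecklemma} supplies a constant $R=R(\delta,\lambda,C)$.

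The key steps, in order, are as follows. First I would fix two points $x,y\in Y$, a geodesic $[x,y]$ between them, and a point $z$ on $[x,y]$; let $p$ be any path from $x$ to $y$. The subpath of $[x,y]$ from $x$ to $z$ is itself a geodesic, hence a $(\lambda,C)$-quasi-geodesic (indeed a $(1,0)$-one, but this is the form Lemma~\ref{bottlenecklemma} wants), so $z$ lies within $R$ of any geodesic $[x,z]'$; but $[x,z]$ is already geodesic, so this step is trivial — the real use of Lemma~\ref{bottlenecklemma} is to control how the path $p$ behaves. Second, and this is the main point, I would produce from $p$ a geodesic through (or near) $z$ to which Theorem~\ref{bottleneck}(b) can be applied at its genuine midpoint. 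Concretely, let $m_1$ be the midpoint of $[x,z]$ and $m_2$ the midpoint of $[z,y]$; applying Theorem~\ref{bottleneck}(b) to the pairs $(x,z)$ and $(z,y)$ is not directly legal because $p$ is a path from $x$ to $y$, not from $x$ to $z$. Instead I would use the bottleneck property directly for the pair $(x,y)$ but at the point $z$: decompose, and the clean way is to invoke the remark preceding Lemma~\ref{bpr}, namely that $z$ is coarsely the midpoint of some pair of points on $[x,y]$ — or more robustly, to argue that the image of $z$ in $T$ lies on the geodesic $[\phi(x),\phi(y)]_T$ up to error $(\lambda+1)C+R$, that $\phi\circ p$ is a path in $T$ from $\phi(x)$ to $\phi(y)$, that in a tree every path from $\phi(x)$ to $\phi(y)$ contains the geodesic $[\phi(x),\phi(y)]_T$ and in particular passes through $\phi(z)$ up to that error, and then pull back through $\phi$, absorbing all constants (the quasi-isometry distortion $\lambda,C$, the approximation $R$ from Lemma~\ref{bottlenecklemma}, and the fact that paths map to paths under a coarse map only after subdivision) into a single $\mu$.

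The main obstacle I anticipate is bookkeeping rather than a conceptual difficulty: a quasi-isometry $\phi\colon Y\to T$ does not send a continuous path to a continuous path, so "$\phi\circ p$ is a path in $T$" has to be replaced by a discrete chain of points in $T$ with consecutive jumps of size at most $\lambda\cdot(\text{diam of an edge}) + C$, and one must check that such a chain joining $\phi(x)$ to $\phi(y)$ in a tree must come within a bounded distance of \emph{every} point of the tree-geodesic $[\phi(x),\phi(y)]_T$ (here one uses that in a tree the geodesic is a cut set: to get from one side of $\phi(z)$ to the other the chain must jump across, and a jump of bounded size near a vertex of degree possibly large still lands within that bound of $\phi(z)$). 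Once that combinatorial fact in trees is isolated, pulling everything back through $\phi$ and setting $\mu := \lambda\bigl(\text{(that tree bound)} + (\lambda+1)C + R\bigr) + C + 1$ (or any similar explicit expression) finishes the argument. I would present the tree-side statement as a short self-contained sublemma, then devote one paragraph to the pull-back and the constant chase.
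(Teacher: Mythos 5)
Your proposal is correct and, once you discard the initial detour through midpoints of subsegments, it is essentially the paper's own argument: push the geodesic and the competing path into the tree via the quasi-isometry, note that the image of the path (discretized into a chain with bounded jumps, or equivalently completed to an honest path by short geodesic segments) must meet the tree geodesic up to the jump bound, use Lemma \ref{bottlenecklemma} in the $0$-hyperbolic tree to place the image of $z$ within $R$ of that tree geodesic, and pull back through the quasi-isometry, absorbing all constants into $\mu$. The paper implements the same tree-side step by joining consecutive image points to form a path $\beta\supseteq\sigma$ rather than by your cut-point phrasing, but these are the same estimate.
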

\begin{proof}
Let $T$ be a tree and $q\colon Y \rightarrow T$ be the $(\lambda,C)$ quasi-isometry. Let $d_Y$ and $d_T$ denote the metrics in the spaces $Y$ and $T$ respectively. Note that since $T$ is 0-hyperbolic, $Y$ is $\delta$-hyperbolic for some $\delta$.

Let $x, y$ be two points in $Y$, joined by a geodesic $\gamma$. Let $z$ be any point of $\gamma$, and let $\alpha$ be another path from $x$ to $y$. Let $V$ denote the vertex set of $\alpha$, ordered according to the geodesic $\gamma$. Take its image $q(V)$ and connect consecutive points by geodesics (of length at most $\lambda + C$) to get a path $\beta$ in $T$ from $q(x)$ to $q(y)$. Then the unique geodesic $\sigma$ in $T$ must be a subset of $\beta$. Since $q(V) \subset q\circ\alpha$, we get that any point of $\sigma$ s at most $\lambda + C$ from $q\circ\alpha$. Also, $q\circ\gamma$ is a ($\lambda$, C)-quasi-isometric embedding of an interval, and hence a $(\lambda, C)$-quasi-geodesic. Thus, by Lemma \ref{bottlenecklemma} the distance from $q(z)$ to $\sigma$ is less than $R= R(0, \lambda, C).$

Let $p$ be the point on $\sigma$ closest to $q(z)$. There is a point $w \in Y$ on $\alpha$ such that $d(q(w), p) \leq \lambda +C$. Since $d(p, q(z)) < R$, we have $d(q(w), q(z)) \leq \lambda + C+ R$. Thus $$d(z, w) \leq \lambda^2 +2\lambda C +R\lambda.$$ Thus $\alpha$ must pass within $\mu = \lambda^2 +2\lambda C +R\lambda$ of the point $z$.
\end{proof}

\subsection{ A modified version of Bowditch's lemma} \label{modB}

In this section, $\mathcal{N}_k(x)$ denotes the closed $k$-neighborhood of a point $x$ in a metric space. The following theorem  will be used in Section 5.  Part (a) is a simplified form of a result taken from \cite{kapo}, which is infact derived from a hyperbolicity criterion developed by Bowditch in \cite{bow}. 

\begin{thm} \label{B} Let $\Sigma$ be a hyperbolic graph, and $\Delta$ be a graph obtained from $\Sigma$ by adding edges.  \begin{enumerate} \item[(a)]\cite{bow} Suppose there exists $M >0$ such that for all vertices $x,y \in \Sigma$ joined by an edge in $\Delta$ and for all geodesics p in $\Sigma$ between $x$ and $y$, all vertices of p lie in an M-neighborhood of $x$, i.e., $p \subseteq \mathcal{N}_{M}(x)$ in $\Delta$.  Then $\Delta$ is also hyperbolic, and there exists a constant $k$ such that for all vertices x,y $\in \Sigma$, every geodesic q between x and y in $\Sigma$  lies in a k-neighborhood in $\Delta$ of every geodesic in $\Delta$ between x and y.
\item[(b)]  If, under the assumptions of  (a), we additionally assume that $\Sigma$ is a quasi-tree, then $\Delta$ is also a quasi-tree.
\end{enumerate} \end{thm}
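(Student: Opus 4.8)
The plan is to deduce part~(b) from the Bottleneck Property of Theorem~\ref{bottleneck}: I will check that $\Delta$ satisfies condition (b) of that theorem, so that $\Delta$ is quasi-isometric to a simplicial tree. Note that $\Delta$ is a connected graph with the same vertex set as $\Sigma$, hence a geodesic metric space, and that the identity map $\Sigma\to\Delta$ does not increase distances: $d_\Delta\le d_\Sigma$ on $V(\Delta)=V(\Sigma)$. Also $\Delta$ is hyperbolic by part~(a). It is standard that it suffices to establish the bottleneck condition for pairs of \emph{vertices} of $\Delta$, since an arbitrary point lies within $1/2$ of a vertex and the constant can be enlarged to absorb this.

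Fix vertices $x,y$ of $\Delta$. Let $\gamma$ be a geodesic of $\Delta$ from $x$ to $y$, parametrised by arc length on $[0,L]$ with $L=d_\Delta(x,y)$, and put $m=\gamma(L/2)$. The first step is to locate a point on a $\Sigma$-geodesic that is $\Delta$-close to $m$. Choose a geodesic $q=(x=v_0,v_1,\dots,v_n=y)$ of $\Sigma$. By part~(a), $q$ lies in the $k$-neighbourhood of $\gamma$ in $\Delta$, so for each $i$ there is $t_i\in[0,L]$ with $d_\Delta(v_i,\gamma(t_i))\le k$, and we may take $t_0=0$, $t_n=L$. Since $v_i$ and $v_{i+1}$ are joined by an edge of $\Sigma$, hence of $\Delta$, we get $|t_i-t_{i+1}|\le 2k+1$; thus $t_0,\dots,t_n$ form a $(2k+1)$-dense subset of $[0,L]$, and so there is a vertex $z$ of $q$ with $d_\Delta(z,m)\le 3k+1$.

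The second step compares an arbitrary $\Delta$-path with a $\Sigma$-path. Let $\alpha$ be any path of $\Delta$ from $x$ to $y$. Build a path $\alpha'$ of $\Sigma$ from $x$ to $y$ by keeping each edge of $\alpha$ that already belongs to $\Sigma$ and replacing each added edge, with endpoints $u,v$, by a geodesic of $\Sigma$ from $u$ to $v$; by the hypothesis of part~(a), that geodesic lies in $\mathcal N_M(u)$ inside $\Delta$. Consequently every point of $\alpha'$ is within $\Delta$-distance $M$ of a point of $\alpha$. Now apply Lemma~\ref{bpr} to the quasi-tree $\Sigma$, with the $\Sigma$-geodesic $q$, the point $z$ on it, and the $\Sigma$-path $\alpha'$: there is $w\in\alpha'$ with $d_\Sigma(w,z)\le\mu$, hence $d_\Delta(w,z)\le\mu$. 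Pulling $w$ back to a point $w'\in\alpha$ with $d_\Delta(w,w')\le M$ and using $d_\Delta(z,m)\le 3k+1$, we get $d_\Delta(w',m)\le M+\mu+3k+1$. Thus every path of $\Delta$ from $x$ to $y$ passes within less than $\mu':=M+\mu+3k+2$ of $m$, while $d_\Delta(x,m)=d_\Delta(m,y)=\tfrac12 d_\Delta(x,y)$. Theorem~\ref{bottleneck} then gives that $\Delta$ is a quasi-tree.

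The only delicate point is the first step. Part~(a) is phrased so as to bound how far a $\Sigma$-geodesic can stray from a $\Delta$-geodesic, but what is actually needed is the reverse: that the $\Delta$-geodesic $\gamma$ stays $\Delta$-close to the $\Sigma$-geodesic $q$, so that the midpoint $m$ has a nearby point on $q$. This follows from the inclusion $q\subseteq\mathcal N_k(\gamma)$ by the coarse-density argument above --- a path from $x$ to $y$ that never leaves the $k$-neighbourhood of $\gamma$ must shadow all of $\gamma$. Everything else is bookkeeping with additive constants, together with the fact that passing from $\Sigma$ to $\Delta$ only decreases distances.
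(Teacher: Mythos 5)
Your proof is correct and follows essentially the same route as the paper: verify the bottleneck criterion for $\Delta$, use part (a) together with a ``flip'' of the neighborhood inclusion to place a point of a $\Sigma$-geodesic $\Delta$-close to the midpoint, replace the added edges of an arbitrary $\Delta$-path by $\Sigma$-geodesics (which stay $M$-close to the path by the hypothesis of (a)), and apply Lemma \ref{bpr} in the quasi-tree $\Sigma$. The only difference is cosmetic: you obtain the flip by a discrete coarse-density argument along the vertices of the $\Sigma$-geodesic, where the paper uses Lemma \ref{neighborhood} (an intermediate value argument), yielding a slightly different constant.
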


\begin{figure}
\centering
\def\svgscale{0.4}
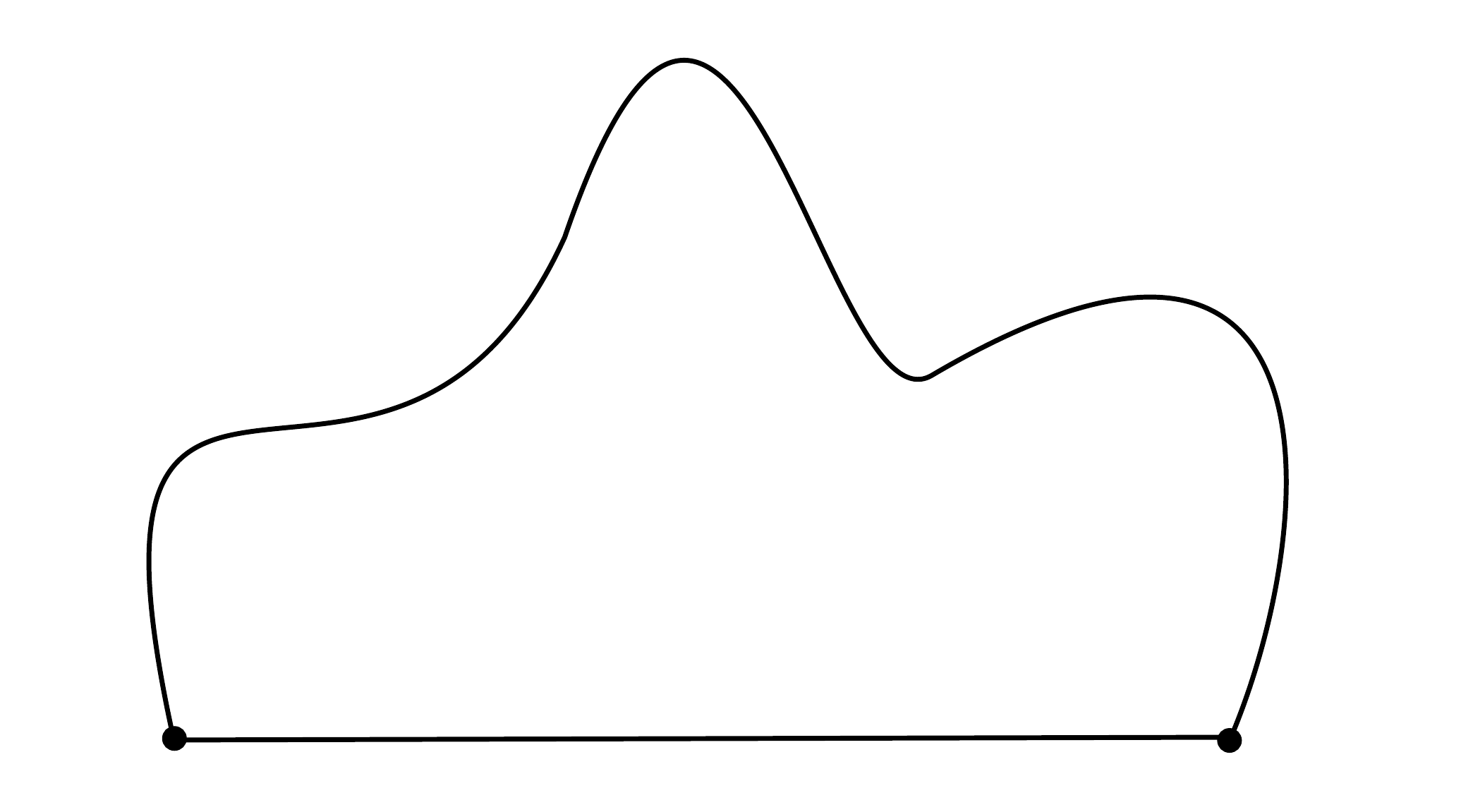
\caption{Corresponding to Lemma \ref{neighborhood}}
\label{flip}
\end{figure}

\begin{lem} \label{neighborhood} Let p,q be two paths in a metric space S between points x and y, such that p is a geodesic and  q $\subseteq \mathcal{N}_k(p)$. Then $p$ $\subseteq \mathcal{N}_{2k}(q)$.
\end{lem}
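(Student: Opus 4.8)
The plan is to fix an arbitrary point $z$ on the geodesic $p$ and produce a single point $w$ on $q$ with $d(z,w)\le 2k$; since $z$ is arbitrary, this yields $p\subseteq\mathcal N_{2k}(q)$. Split the geodesic at $z$: let $p_1$ be the subsegment of $p$ from $x$ to $z$ and $p_2$ the subsegment from $z$ to $y$, so that as subsets of $S$ we have $p=p_1\cup p_2$, and set $N_i=\mathcal N_k(p_i)$ for $i=1,2$. Each $N_i$ is closed, because $y\mapsto d(y,p_i)$ is continuous. Since $q\subseteq\mathcal N_k(p)=N_1\cup N_2$, the two closed sets $N_1,N_2$ cover $q$; moreover $x\in p_1\subseteq N_1$ and $y\in p_2\subseteq N_2$ with $x,y\in q$, so $q$ meets both $N_1$ and $N_2$.

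The key step is to invoke connectedness of $q$ (a path is a continuous image of an interval, hence connected): a connected set that is covered by two closed sets and meets each of them must meet their intersection — otherwise $q$ would be the disjoint union of the two nonempty relatively closed subsets $q\cap N_1$ and $q\cap N_2$, contradicting connectedness. Hence there is a point $w\in q\cap N_1\cap N_2$. Choose $u_1\in p_1$ and $u_2\in p_2$ with $d(w,u_1)\le k$ and $d(w,u_2)\le k$ (see Figure~\ref{flip}). Since $u_1$, $z$, $u_2$ all lie on the geodesic $p$ with $z$ between $u_1$ and $u_2$, we get $d(u_1,z)+d(z,u_2)=d(u_1,u_2)\le d(u_1,w)+d(w,u_2)\le 2k$, so $\min\{d(u_1,z),d(z,u_2)\}\le k$. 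In the case $d(u_1,z)\le k$ (the other case being symmetric), $d(z,w)\le d(z,u_1)+d(u_1,w)\le 2k$, as desired.

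I do not anticipate a real obstacle here; the two points needing care are the topological fact about connected sets covered by two closed sets (and the corresponding verification that the closed neighborhoods $\mathcal N_k(p_i)$ are genuinely closed), and the degenerate cases $z=x$ or $z=y$, which are trivial since then $z\in q$. It is worth emphasizing that the constant $2k$ — rather than the weaker $3k$ that a naive projection argument produces — comes precisely from the observation that $z$ cannot be $k$-far from \emph{both} $u_1$ and $u_2$, since their combined distance to $z$ measured along $p$ is at most $2k$.
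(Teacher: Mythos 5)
Your argument is correct and is essentially the paper's own proof: both split the geodesic at $z$ into $p_1,p_2$ and locate a point $w$ on $q$ lying within $k$ of each half, the paper via the intermediate value theorem applied to $s\mapsto d(s,p_1)-d(s,p_2)$ and you via the equivalent connectedness/closed-cover argument. Your final step, noting that $\min\{d(u_1,z),d(z,u_2)\}\le k$ because $z$ lies between $u_1$ and $u_2$ on the geodesic, actually spells out more carefully the paper's terser passage from $d(z_1,z_2)\le 2k$ to $d(z,w)\le 2k$.
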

\begin{proof} Let $z$ be any point on $p$. Let $p_1, p_2$ denote the segments of the geodesic $p$ with end points $x, z$ and $z, y$ respectively. 

Define a function $f \colon q \rightarrow \mathbb{R}$ as $f(s) = d(s, p_1) - d(s, p_2)$. Then $f$ is a continuous function. Further, $f(x) < 0$ and $f(y) > 0$. By the intermediate value theorem, there exists a point $w$ on $q$ such that $f(w) =0$. Thus $d(w, p_1) = d(w, p_2)$ (see Fig.\ref{flip}). 
Let $z_1$ (resp. $z_2$) be a point of $p_1$ (resp. $p_2$) such that $d(p_i, w) = d(z_i,w)$ for $i=1,2$. Then $d(z_1,w)= d(z_2,w)$. By the hypothesis, $d(w,p) = min\{d(w,p_1), d(w,p_2)\} \leq k$. So we get that $d(w,p_1) = d(w,p_2) \leq k$. Thus $d(z_1, z_2) \leq 2k$, which implies $d(z,w) \leq 2k$. 
\end{proof}

\begin{proof}[Proof of Theorem \ref{B}] We proceed with the proof of part (b). 

We prove that $\Delta$ is a quasi-tree by verifying the bottleneck property from Theorem \ref{bottleneck}. Let $d_\Sigma$ (resp. $d_\Delta$) denote the distance in the graph $\Sigma$ (resp. $\Delta$). Note that the vertex sets of the two graphs are equal. 

\begin{figure}
\centering
\def\svgscale{0.5}
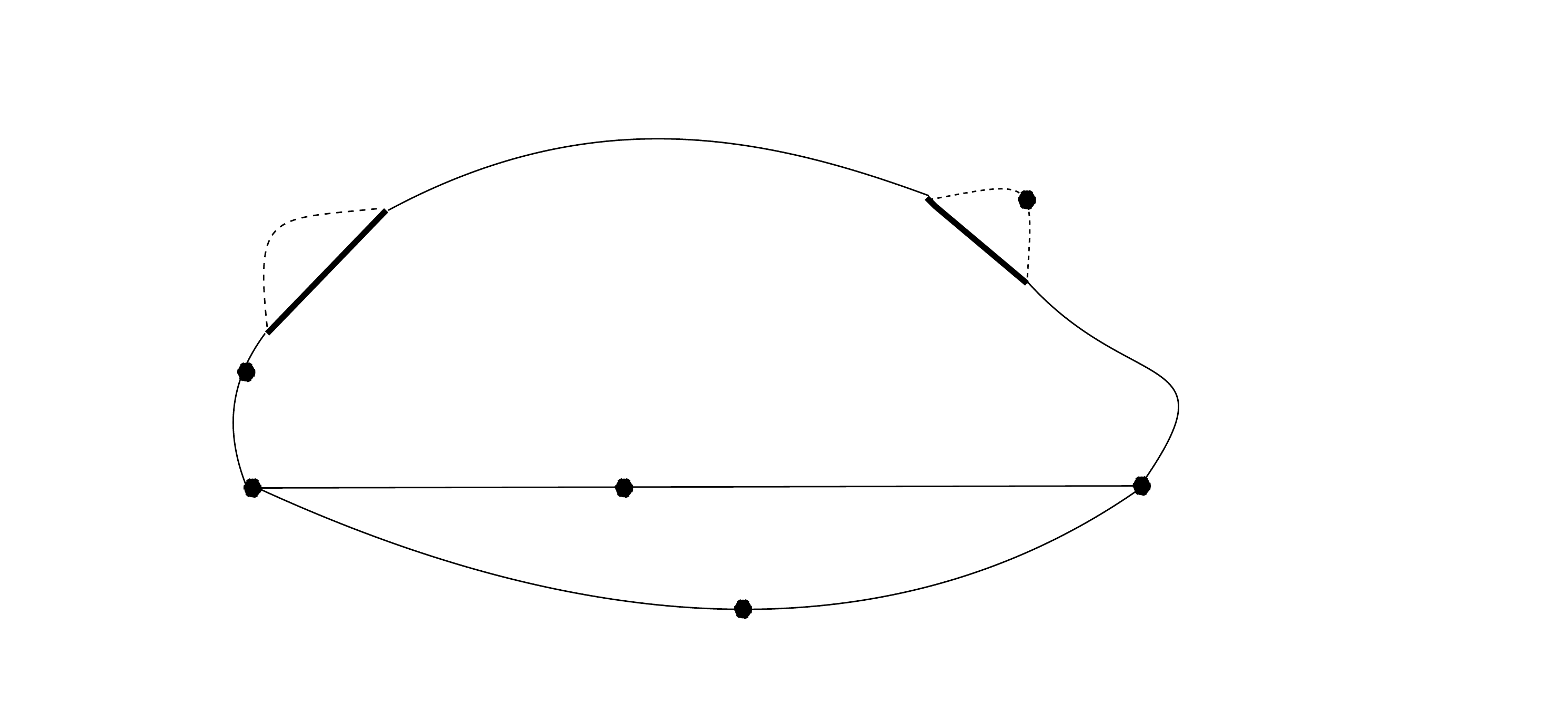
\caption{Corresponding to Theorem \ref{B}}
\label{bowdiag}
\end{figure}

Let $x,y$ be two vertices. Let $m$ be the midpoint of a geodesic $r$ in $\Delta$ connecting them. Let $s$ be any path from $x$ to $y$ in $\Delta$. The path $s$ consists of edges of two types\begin{enumerate}
\item[(i)] edges of the graph $\Sigma$;
\item[(ii)] edges added in transforming $\Sigma$ to $\Delta$ (marked as bold edges on Fig.\ref{bowdiag}). \end{enumerate}
Let $p$ be a geodesic in $\Sigma$ between $x$ and $y$. By Part (a), there exists $k$ such that $p$ is in the $k$-neighborhood of $r$ in $\Delta$. Applying Lemma \ref{neighborhood} , we get a point $n$ on $p$ such that $$d_\Delta(m,n) \leq 2k.$$ 

Let $s'$ be the path in $\Sigma$ between $x$ and $y$, obtained from $s$ by replacing every edge $e$ of type (ii) by a geodesic path $t(e)$ in $\Sigma$ between its end points (marked by dotted lines in Fig.\ref{bowdiag}). Since $\Sigma$ is a quasi-tree, by Lemma \ref{bpr}, there exists $\mu' >0$ and a point $z$ on $s'$ such that $$d_\Sigma(z,n) \leq \mu'.$$
\begin{enumerate}
\item[Case 1:] If $z$ lies on an edge of $s$ of type (i) , then $$d_\Delta(z,m) \leq d_\Delta(z,n) +d_\Delta(n,m) \leq d_\Sigma(z,n) + d_\Delta(n,m) \leq \mu' + 2k.$$
\item[Case 2:]If $z$ lies on a path $t(e)$ that replaced an edge $e$ of type (ii), then by Part (a), $$d_\Delta(e_{-}, m) \leq d_\Delta(e_{-}, z) + d_\Delta(z, n) + d_\Delta(n, m) \leq k+ \mu' + 2k = \mu' +3k. $$
\end{enumerate} 
Thus the bottleneck property holds for $\mu = \mu'+ 3k > 0$.  \end{proof}

\section{Proof of the main result} \label{constr}

Our main result is the following theorem, from which Theorem \ref{main} and other corollaries stated in the introduction can be easily derived (see Section \ref{secappln}). 
\begin{thm} \label{3 condns} Let $\{H_1, H_2,...,H_n\}$ be a finite collection of countable subgroups of a group $G$ such that $\{H_1,H_2,...,H_n\} \hookrightarrow_{h} (G,Z)$ for some $Z\subset G$. Let $K$ be a subgroup of $G$ such that $H_i \leq K$ for all $i$. Then there exists a subset $Y \subset K$ such that: \begin{itemize}
\item[(a)]$\{H_1, H_2,...,H_n\} \hookrightarrow_h (K,Y)$
\item[(b)]$\Gamma(K, Y \sqcup \mathcal{H})$ is a quasi-tree, where $\mathcal{H} = \bigsqcup_{i=1}^{n}H_i$
\item[(c)]The action of $K$ on $\Gamma(K, Y \sqcup \mathcal{H})$ is acylindrical
\item[(d)] $Z \cap K \subset Y$ 
\end{itemize}
\end{thm}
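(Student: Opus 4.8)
The plan is to combine the Bestvina--Bromberg--Fujiwara projection complex construction of \cite{BBF} with the acylindrification procedure of \cite{ah} (Theorem~\ref{DO}), using the hyperbolically embedded subgroups $\{H_i\}$ to supply the projection data in place of the geometrically separated subgroups used in \cite{DGO}, and using Theorem~\ref{B}(b) to carry the quasi-tree property through both steps. Throughout, work inside the graph $\Sigma_0:=\Gamma(G,Z\sqcup\mathcal H)$, which is hyperbolic and whose relative metrics $\widehat d_i$ on the $H_i$ are proper, both by the hypothesis $\{H_1,\dots,H_n\}\hookrightarrow_h(G,Z)$.

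\textbf{Step 1 (finite-valued metrics).} First I would apply the construction in the proof of Theorem~\ref{tilde} to each proper relative metric $\widehat d_i$ on $H_i$ — this uses only that $H_i$ is countable and that $\widehat d_i$ has finite balls, which hold here — to obtain left-$H_i$-invariant, finite-valued, proper metrics $\widetilde d_i\le\widehat d_i$; left-invariance lets one transport $\widetilde d_i$ to a well-defined metric on every left coset $kH_i$. Finiteness of $\widetilde d_i$ is exactly what will make the projection numbers below finite, which is the reason Theorem~\ref{tilde} is needed.

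\textbf{Step 2 (the projection complex, realised as a Cayley graph).} Let $\mathbb Y$ be the set of left cosets $\{kH_i: k\in K,\ 1\le i\le n\}$ with $K$ acting by left translation, and view each $Y=kH_i\in\mathbb Y$ as the complete subgraph on $kH_i$ inside $\Sigma_0$. For distinct $X,Y,W\in\mathbb Y$ set $d_Y(X,W)$ to be the $\widetilde d_i$-diameter of the coarse nearest-point projections of $X$ and $W$ onto $Y$ in $\Sigma_0$. The central point is that $(\mathbb Y,\{d_Y\})$ satisfies the Bestvina--Bromberg--Fujiwara projection axioms with a uniform constant: this is extracted from Lemma~\ref{isolated} applied to geodesic triangles and quadrilaterals of $\Sigma_0$, since an \emph{isolated} $H_i$-component lying in a coset $Y$ has $\widehat d_i$-length (hence $\widetilde d_i$-length) bounded by a universal constant, yielding the finiteness, the Behrstock-type inequality, and the linear-order axiom. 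Feeding this into \cite{BBF}: for a suitable threshold $\theta$ the projection complex $\mathcal P=\mathcal P_\theta(\mathbb Y)$ is a quasi-tree, and the associated quasi-tree of metric spaces $\mathcal C$ built with the \emph{bounded} fibers $(kH_i,\text{clique metric})$ is again a quasi-tree on which $K$ acts. Reading off the labels of the connecting (``horizontal'') edges of $\mathcal C$, translated to emanate from $1$, and adjoining $Z\cap K$, one gets a set $Y_0\subset K$ such that $\Gamma(K,Y_0\sqcup\mathcal H)$ is $K$-equivariantly quasi-isometric to $\mathcal C$, hence a quasi-tree; adjoining $Z\cap K$ is harmless because each $z\in Z$ is at distance $1$ from $1$ in $\Sigma_0$, so by the same Lemma~\ref{isolated} estimates $H_i$ and $zH_i$ are uniformly close in $\mathcal P$, and Theorem~\ref{B}(b) applies. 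Finally, comparing $\widehat d_i$-admissible paths in $\Gamma(K,Y_0\sqcup\mathcal H)$ with the projection geometry (equivalently with the proper relative metric of $\Sigma_0$) shows $(H_i,\widehat d_i)$ is proper there, so $\{H_1,\dots,H_n\}\hookrightarrow_h(K,Y_0)$. At this stage (a), (b) and (d) hold for $Y_0$.

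\textbf{Step 3 (acylindrification).} Now apply Theorem~\ref{DO} to $\{H_1,\dots,H_n\}\hookrightarrow_h(K,Y_0)$ (the finiteness of the collection is used here): it produces $Y\subset K$ with $Y_0\subset Y$, $\{H_1,\dots,H_n\}\hookrightarrow_h(K,Y)$, and the action of $K$ on $\Gamma(K,Y\sqcup\mathcal H)$ acylindrical. The graph $\Gamma(K,Y\sqcup\mathcal H)$ is obtained from the quasi-tree $\Gamma(K,Y_0\sqcup\mathcal H)$ by adding edges, and the elements adjoined by the construction of Theorem~\ref{DO} are exactly such that geodesics of $\Gamma(K,Y_0\sqcup\mathcal H)$ between the endpoints of each new edge stay in a uniformly bounded neighbourhood of an endpoint — this Bowditch-type estimate is the mechanism by which \cite{ah} proves hyperbolicity of the enlarged Cayley graph. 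Hence the hypothesis of Theorem~\ref{B} is met and part (b) gives that $\Gamma(K,Y\sqcup\mathcal H)$ is again a quasi-tree. Since $Z\cap K\subset Y_0\subset Y$, all of (a)--(d) hold for $Y$.

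\textbf{Main obstacle.} I expect the work to be concentrated in Step 2: verifying the Bestvina--Bromberg--Fujiwara axioms for cosets of hyperbolically embedded — rather than geometrically separated — subgroups via Lemma~\ref{isolated}, and then the bookkeeping needed to present the resulting quasi-tree as a genuine Cayley graph $\Gamma(K,Y_0\sqcup\mathcal H)$ with relative generating set $Y_0\subset K$; in particular, checking that the relative metrics remain proper under the passage from $\Sigma_0$ to $\Gamma(K,Y_0\sqcup\mathcal H)$, that $Y_0\cup\bigcup_i H_i$ generates $K$, and that $Z\cap K$ can be absorbed. Step 3 is then essentially a citation of \cite{ah}, once Theorem~\ref{B}(b) is available to transport the quasi-tree property across the acylindrification.
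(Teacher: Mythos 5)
Your proposal is correct and follows essentially the same route as the paper: finite-valued proper metrics from Theorem \ref{tilde}, verification of the Bestvina--Bromberg--Fujiwara axioms for the cosets $kH_i$ via the isolated-component estimate of Lemma \ref{isolated}, realization of the resulting quasi-tree as a Cayley graph $\Gamma(K, X \sqcup \mathcal{H})$ over a relative generating set of minimal double-coset representatives (with properness of the new relative metrics checked by projecting admissible paths), and finally acylindrification via Theorem \ref{DO} with the quasi-tree property carried across by Theorem \ref{B}(b). The only deviations are presentational: the paper works with the projection complex $P_J(\mathbb{Y})$ directly rather than the quasi-tree of spaces, and it absorbs $Z \cap K$ by showing (Lemmas \ref{notrep} and \ref{rep}) that for $J$ large all but finitely many of its elements already lie in $X$ and then invoking Lemma \ref{symmdiff} and Remark \ref{change}, which is the tidier version of the ``harmless adjunction'' and properness check you flag as your main obstacle.
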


\subsection{Outline of the proof} 
\textbf{Step 1:} In order to prove Theorem \ref{3 condns}, we first prove the following proposition. It is distinct from Theorem \ref{3 condns} since it does not require the action of $K$ on the Cayley graph $\Gamma(K, X \sqcup \mathcal{H})$ to be acylindrical. 
\begin{prop} \label{prop} Let $\{H_1, H_2,...,H_n\}$ be a finite collection of countable subgroups of a group $G$ such that $\{H_1, H_2,...,H_n\} \hookrightarrow_{h} G$ with respect to a relative generating set $Z$. Let $K$ be a subgroup of $G$ such that $H_i \leq K$ for all $i$. Then there exists $X \subset K$ such that \begin{itemize} 
\item[(a)] $\{H_1, H_2,..., H_n\} \hookrightarrow_{h}  (K,X)$ 
\item[(b)] $\Gamma(K, X \sqcup \mathcal{H})$ is a quasi-tree, where $\mathcal{H} = \bigsqcup_{1=1}^{n} \{H_i\}$
\item[(c)] $Z \cap K \subset X$
\end{itemize} 
\end{prop}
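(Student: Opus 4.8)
The plan is to build $X$ by enlarging $Z\cap K$ with a large $K$-equivariant family of projections coming from the Bestvina--Bromberg--Fujiwara projection complex machinery, exactly as in the construction of \cite{DGO} that upgrades a hyperbolically embedded family to one whose relative Cayley graph is a quasi-tree. First I would record the setup: since $\{H_1,\dots,H_n\}\hookrightarrow_h(G,Z)$, the Cayley graph $\Gamma(G,Z\sqcup\mathcal H)$ is hyperbolic and each $(H_i,\widehat d_i)$ is proper; moreover, because each $H_i$ is countable, Theorem \ref{tilde} gives a proper, finite-valued metric $\widetilde d_i\le\widehat d_i$ on $H_i$. The point of passing to $\widetilde d_i$ is that it makes the set of ``short'' cosets finite in a way that can be used to define the projection constants, and it guarantees the metrics we feed into the projection complex take real (not infinite) values. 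I would then set $X_0=Z\cap K$ and consider the Cayley graph $\Gamma(K,X_0\sqcup\mathcal H)$; one checks $K$ is generated by $X_0$ together with $\bigcup_i H_i$ (this uses $H_i\le K$ and a standard Reidemeister--Schreier / coset-length argument inside $\Gamma(G,Z\sqcup\mathcal H)$), so this is a legitimate relative Cayley graph, and condition (c) is automatic.

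Next I would set up the projection complex. The collection of vertices is the $K$-orbit of the ``cosets'' $\{kH_i : k\in K,\ 1\le i\le n\}$, viewed as subsets of $\Gamma(K,X_0\sqcup\mathcal H)$ (equivalently of $\Gamma(G,Z\sqcup\mathcal H)$), with projections $\pi_{gH_i}(kH_j)$ defined via nearest-point projection in the hyperbolic graph and distance functions derived from $\widehat d_i$ restricted to cosets. The key input is that isolated $H_i$-components have bounded relative length (Lemma \ref{isolated}), which is precisely what gives the Behrstock-type inequality: distinct cosets have uniformly bounded mutual projections, and the projection axioms of \cite{BBF} hold with uniform constants. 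Applying the Bestvina--Bromberg--Fujiwara construction with a sufficiently large parameter produces a quasi-tree of spaces / a quasi-tree $\mathcal C$ on which $K$ acts by isometries. I would then take $X$ to be $X_0$ together with the set of all elements of $K$ that label geodesic-like paths witnessing bounded-distance jumps between cosets — concretely, $X=X_0\cup(\bigcup\{$elements moving the basepoint a bounded distance in $\mathcal C\})$, chosen so that $\Gamma(K,X\sqcup\mathcal H)$ is $K$-equivariantly quasi-isometric to $\mathcal C$; this is the standard way one realizes the projection-complex quasi-tree as a relative Cayley graph, and it is where the bulk of the bookkeeping lives.

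With $X$ in hand, I would verify the three conclusions. For (b): the relative Cayley graph is quasi-isometric to the projection-complex quasi-tree $\mathcal C$, which is a quasi-tree by construction; alternatively one can check the bottleneck property (Theorem \ref{bottleneck}) directly using the tree-like combinatorics of the projection complex. For (a): hyperbolicity of $\Gamma(K,X\sqcup\mathcal H)$ follows since quasi-trees are hyperbolic; properness of $(H_i,\widehat d_i^{\,X})$ in the new relative metric needs an argument — here one shows $\widehat d_i^{\,X}\ge\widetilde d_i$ up to the usual bound, essentially because a short admissible path in $\Gamma(K,X\sqcup\mathcal H)$ from $1$ to $h\in H_i$ either stays near $\Gamma_{H_i}$ (controlled by $\widehat d_i$) or makes excursions through other cosets that, by the projection estimates, cost a definite amount, so finitely many $h$ are within any bounded $\widehat d_i^{\,X}$-distance of $1$. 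For (c): $Z\cap K=X_0\subseteq X$ by fiat.

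The main obstacle I anticipate is (a), specifically verifying that the relative metric $\widehat d_i$ \emph{with respect to the new generating set $X$} remains proper. Adding all those extra generators $X\setminus X_0$ could in principle shorten admissible paths between elements of $H_i$ and destroy properness — this is exactly the phenomenon illustrated in the $H\times\mathbb Z$ example. Controlling this requires that the elements we throw into $X$ genuinely come from the projection complex (so that an admissible path using them must still traverse the tree-like structure of $\mathcal C$, incurring cost proportional to the number of distinct cosets it passes near), rather than from arbitrary shortcuts through a single $H_i$. Making this quantitative — i.e., proving a lower bound on $\widehat d_i^{\,X}$ in terms of $\widetilde d_i$ — is the technical heart of the proof and is where Lemma \ref{isolated} and the \cite{BBF} projection axioms must be combined carefully.
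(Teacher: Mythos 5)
Your overall strategy is the paper's (BBF projection complex on the cosets $\mathbb{Y}=\{kH_i\}$, projections by nearest-point projection in $\Gamma(G,Z\sqcup\mathcal H)$ measured with the modified metrics $\widetilde d_i$ of Theorem \ref{tilde}, axioms verified via Lemma \ref{isolated}, then realizing the quasi-tree as $\Gamma(K,X\sqcup\mathcal H)$), but the step you leave vague is exactly the one that decides whether conclusion (a) holds, and the concrete choice you offer fails. If $X$ consists of \emph{all} elements of $K$ moving a basepoint of $P_J(\mathbb{Y})$ a bounded distance, properness of the new relative metric is destroyed: fix any $x_e\in K$ of type $(1,j)$ with $d_P(H_1,x_eH_1)$ bounded; then for every $h_1,h\in H_1$ both $h_1x_e$ and $(h_1x_e)^{-1}h=x_e^{-1}h_1^{-1}h$ displace the basepoint a bounded amount (since $hH_1=H_1$), so both lie in your $X$, and the two-edge path $1\to h_1x_e\to h$ contains no edge of $\Gamma(H_1,H_1)$. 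Hence $\widehat{d^{X}_1}(1,h)\le 2$ for \emph{every} $h\in H_1$, so $(H_1,\widehat{d^{X}_1})$ is not proper whenever $H_1$ is infinite. The paper avoids this by taking, for each edge of $star(H_i)$ joining $H_i$ to $kH_j$, a single generator $x_e\in H_ikH_j$ with $\dzh(1,x_e)=\dzh(1,H_ikH_j)$; this minimality in the double coset is precisely what makes Lemma \ref{alpha} true (a large projection of $\{1,x\}$ on some $Y$ forces $x\in H_j$ and $Y=H_j$), which is then summed along an admissible path to give $\widetilde d_i(1,h)\le\alpha\,\widehat{d^{X}_i}(1,h)$ and hence properness. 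Your sketch of the properness argument ("excursions through other cosets cost a definite amount") presupposes exactly this property of the generators, which your construction does not secure; identifying and exploiting the minimal double-coset representatives is the missing idea.

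Two further points are wrong or unjustified. First, $K$ need not be generated by $(Z\cap K)\cup\bigcup_iH_i$: take $G$ free on $a,b$, $H=\langle a\rangle\hookrightarrow_h(G,\{b\})$ and $K=\langle a, bab^{-1}\rangle$, where $Z\cap K=\emptyset$; generation of $K$ is instead a consequence of the construction (the paper proves $K=\langle X\cup\bigcup_iH_i\rangle$ together with the quasi-isometry $k\mapsto kH_1$ to $P_J(\mathbb{Y})$). Second, condition (c) is not ``automatic by fiat'': adjoining $Z\cap K$ to a generating set that already satisfies (a) and (b) must be shown not to destroy them, and this is where real work remains. The paper does this by proving that for $J>14C+2\xi$ every $z\in Z\cap K$ not representing an element of some $H_i$ already lies in $X$ (Lemma \ref{notrep}, via an isolated-component argument in a hexagon), that only finitely many remaining elements of $Z\cap K$ exist (Lemma \ref{rep}), and that adding finitely many generators preserves both hyperbolic embeddedness and the quasi-tree property (Lemma \ref{symmdiff} and Lemma \ref{qt}). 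Without an argument of this kind, including $Z\cap K$ from the outset leaves both (a) and (b) unverified for your final $X$.
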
 

\vspace{10pt} \noindent \textbf{Step 2:} Once we have proved Proposition \ref{prop}, we will utilize an 'acylindrification' construction from \cite{ah} to make the action acylindrical, which will prove Theorem \ref{3 condns}. The details of this step are as follows. 
\begin{proof} By Proposition \ref{prop}, there exists $X \subseteq K$ such that \begin{enumerate} 
\item[(a)] $\{H_1, H_2,...,H_n\} \hookrightarrow_{h}  (K,X)$
\item[(b)] $\Gamma(K, X \sqcup \mathcal{H})$ is a quasi-tree 
\item[(c)] $Z \cap K \subset X$
\end{enumerate} 

By applying Theorem \ref{DO} to the above, we get that there exists $Y \subset K$ such that \begin{enumerate}
\item[(a)] $X \subseteq Y$ 
\item[(b)] $\{H_1,H_2,...,H_n\} \hookrightarrow_{h} (K,Y)$. In particular, the Cayley Graph $\Gamma(K, Y \sqcup \mathcal{H})$ is hyperbolic
\item[(c)] The action of $K$ on $\Gamma (K, Y \sqcup \mathcal{H})$ is acylindrical.\end{enumerate}

From the proof of Theorem \ref{DO} (see \cite{ah} for details), it is easy to see that the Cayley graph $\Gamma(G, Y \sqcup \mathcal{H})$ is obtained from $\Gamma(G, X \sqcup \mathcal{H})$ in a manner that satisfies the assumptions of Theorem \ref{B}, with $M =1$. Thus by Theorem \ref{B}, $\Gamma(K, Y \sqcup \mathcal{H})$ is also a quasi-tree. Further $$K \cap Z \subset X \subset Y.$$ Thus $Y$ is the required relative generating set. \end{proof}

We will thus now focus on proving Proposition \ref{prop}. In order to prove this proposition, will use a construction introduced by Bestvina, Bromberg and Fujiwara in \cite{BBF}. We describe the construction below.

\subsection{The projection complex}\label{setting}
\begin{defn} \label{pc} Let $\mathbb{Y}$ be a set. Suppose that for each $Y \in \mathbb{Y}$ we have a function
\begin{center} $d^{\pi} _Y\colon(\mathbb{Y} \backslash \{Y\} \times \mathbb{Y} \backslash \{Y\}) \rightarrow [0,\infty)$ \end{center}
called a \textit{projection} on $Y$, and a constant $\xi > 0$ that satisfy the following axioms for all $Y$ and all $A,B,C \in \mathbb{Y} \backslash \{Y\}$ :
\begin{enumerate}
\item[(A1)] $d^{\pi} _Y(A,B) = d^{\pi} _Y(B,A)$
\item[(A2)] $d^{\pi} _Y(A,B) + d^{\pi} _Y(B,C) \geq d^{\pi} _Y(A,C)$
\item[(A3)] min $\{ d^{\pi} _Y(A,B), d^{\pi} _B(A,Y) \} < \xi $
\item[(A4)] $\# \{ Y \mid d^{\pi} _Y (A,B) \geq \xi \}$ is finite 
\end{enumerate}
\end{defn}

Let $J$ be a positive constant. Then associated to this data we have the \textit{projection complex} $P_J(\mathbb{Y})$, which is a graph constructed in the following manner $\colon$ the set of vertices of  $P_J(\mathbb{Y})$ is the set $\mathbb{Y}$. To specify the set of edges, one first defines a new function $d _Y \colon(\mathbb{Y} \backslash \{Y\} \times \mathbb{Y} \backslash \{Y\}) \rightarrow [0,\infty)$, which can be thought of as a small perturbation of $d^{\pi} _Y$. The exact definition of $d_Y$ can be found in \cite{BBF}. An essential property of the new function is the following inequality, which is an immediate corollary of \cite{BBF}, Proposition 3.2. 

For every $Y \in \mathbb{Y}$ and every $A,B \in \mathbb{Y} \backslash \{Y\}$, we have $$ | d^{\pi} _Y(A,B) - d _Y(A,B) | \leq 2\xi.  \hspace{40pt} (1) $$ 

The set of edge of the graph $P_J(\mathbb{Y})$ can now be described as follows $\colon$ two vertices $ A, B \in \mathbb{Y}$ are connected by an edge if and only if for every $Y \in \mathbb{Y} \backslash \{A, B\}$, $d _Y(A, B) \leq J$. This construction strongly depends on the constant $J$. Complexes corresponding to different $J$ are not isometric in general. 

We would like to mention that if $\mathbb{Y}$ is endowed with an action of a group $G$ that preserves projections, i.e., $d^{\pi}_{g(Y)} (g(A), g(B)) = d^{\pi}_Y(A, B)$ ), then the action of $G$ can be extended to an action on $P_J(\mathbb{Y})$.  We also mention the following proposition, which has been proved under the assumptions of Definition \ref{pc}.

\begin{prop}[\cite{BBF}, Theorem 3.16] \label{K} For a sufficiently large $J > 0$, $P_J(\mathbb{Y})$ is connected and quasi-isometric to a tree.
\end{prop}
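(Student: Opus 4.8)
This is Theorem 3.16 of \cite{BBF}, so the plan is simply to recall the shape of their argument; it uses only the axioms (A1)--(A4) of Definition \ref{pc}, the estimate $(1)$, and the bottleneck criterion (Theorem \ref{bottleneck}). First I would fix an auxiliary threshold $\Theta$ equal to a small multiple of $\xi$ and, for each pair $A,B\in\mathbb Y$, introduce the set of ``relevant'' projections $\mathcal Y(A,B)=\{Y\in\mathbb Y\setminus\{A,B\} : d^\pi_Y(A,B)>\Theta\}$, which is finite by (A4). Using (A3) with basepoint $A$, for distinct $Y_1,Y_2\in\mathcal Y(A,B)$ at least one of $d^\pi_{Y_1}(A,Y_2)<\xi$ and $d^\pi_{Y_2}(A,Y_1)<\xi$ holds; feeding this back through (A1)--(A3) with basepoint $B$ shows, once $\Theta\ge 2\xi$, that exactly one holds. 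Declaring $Y_1\prec Y_2$ in the first case and $A\prec Y\prec B$ for all $Y\in\mathcal Y(A,B)$, the main point at this stage is to check that $\prec$ is a total order (transitivity is the only non-formal part), so that $\mathcal Y(A,B)\cup\{A,B\}$ can be listed as $A=Y_0\prec Y_1\prec\cdots\prec Y_{m+1}=B$.

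Next I would show that, provided $J$ is chosen large compared with $\Theta$ and $\xi$, consecutive $Y_i,Y_{i+1}$ are joined by an edge of $P_J(\mathbb Y)$. A bookkeeping argument with (A1)--(A3) from the basepoints $A$ and $B$, using that no vertex lies strictly between $Y_i$ and $Y_{i+1}$ in the order, bounds $d^\pi_Z(Y_i,Y_{i+1})$ for every third vertex $Z$ by a constant depending only on $\Theta$ and $\xi$; by $(1)$ this forces $d_Z(Y_i,Y_{i+1})\le J$ once $J$ is large enough, so $Y_i\sim Y_{i+1}$. Hence the \emph{standard path} $\gamma(A,B):=(Y_0,\ldots,Y_{m+1})$ is a genuine edge path in $P_J(\mathbb Y)$, which in particular gives connectedness. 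I would then establish, by a further block of estimates of the same flavour (now using a second, larger threshold to bound how far one edge can move a given projection), that standard paths are uniform quasi-geodesics and are coarsely additive: if $C$ lies on $\gamma(A,B)$ then $\gamma(A,B)$ coarsely coincides with $\gamma(A,C)$ followed by $\gamma(C,B)$. This ``no genuine backtracking'' is what distinguishes $P_J(\mathbb Y)$ from a general hyperbolic space.

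Finally I would verify the bottleneck criterion (Theorem \ref{bottleneck}) in the geometric realization of $P_J(\mathbb Y)$. Given $A,B$, take $m$ to be the midpoint of the geodesic $\gamma(A,B)$ and $Y_j$ the vertex of $\gamma(A,B)$ within $\frac{1}{2}$ of $m$; one needs a constant $\mu=\mu(\xi,J)$ so that every path from $A$ to $B$ meets the $\mu$-ball about $Y_j$. For a vertex path $A=C_0,\ldots,C_k=B$ that misses $Y_j$, the quantity $d^\pi_{Y_j}(A,C_i)$ increases from $0$ to $d^\pi_{Y_j}(A,B)>\Theta$ in steps of size at most $J+2\xi$ (by $(1)$ and the edge condition), so some $C_{i^\ast}$ satisfies $d^\pi_{Y_j}(A,C_{i^\ast})>\xi$; applying (A3) to the pair $\{Y_j,C_{i^\ast}\}$ from $A$ then forces $d^\pi_{C_{i^\ast}}(A,Y_j)<\xi$, and the quasi-geodesic/coarse-additivity structure of standard paths upgrades this to $d_{P_J(\mathbb Y)}(C_{i^\ast},Y_j)\le\mu$. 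Granting this, Theorem \ref{bottleneck} gives that $P_J(\mathbb Y)$ is quasi-isometric to a tree.

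The order on $\mathcal Y(A,B)$, the standard paths, and connectedness are the soft part of the argument; the real work, and the main obstacle, is the quantitative control of standard paths (that they are uniform quasi-geodesics and coarsely additive) and the bottleneck estimate built on it -- in particular the last step above, upgrading the projection statement ``$A$ and $Y_j$ have nearly the same image in $C_{i^\ast}$'' to the metric statement ``$C_{i^\ast}$ is close to $Y_j$ in $P_J(\mathbb Y)$''. This conversion is exactly where $J$ must be taken sufficiently large, and it is what occupies most of \cite{BBF}.
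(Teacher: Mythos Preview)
The paper does not prove this proposition at all; it is simply quoted as Theorem~3.16 of \cite{BBF} and used as a black box, so there is no in-paper argument to compare against. Your outline is a reasonable high-level summary of the actual \cite{BBF} proof (total order on large-projection vertices, standard paths giving connectedness, quasi-geodesic and coarse-additivity estimates, then Manning's bottleneck criterion), and you correctly identify where the work lies; for the purposes of this paper, however, citing \cite{BBF} is all that is required.
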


\begin{defn}\label{proj} [Nearest point projection] In a metric space $(S,d)$, given a set $Y$ and a point $a \in S$, we define the nearest point projection as $$proj_Y(a) = \{ y \in Y \mid d(Y, a) =d(y, a) \}.$$ 

If A, Y are two sets in S, then $$proj_Y(A) = \bigcup_{a \in A} proj_Y(a).$$ 
\end{defn}

We note that in our case, since elements of $\mathbb{Y}$ will come from a Cayley graph, which is a combinatorial graph, the nearest point projection will exist. This is because distances on a combinatorial graph take discrete values in $\mathbb{N} \cup \{0\}$. Since this set is bounded below, we cannot have an infinite strictly decreasing sequence of distances. This may not be the case if we have a real tree. For example, on the real line, the point $0$ has no nearest point projection to the open interval $(0,1)$. 

We make all geometric considerations in the Cayley graph $\Gamma(G, Z \sqcup \mathcal{H})$. Let $\dzh$ denote the metric on this graph. Since $\{H_1,H_2,...,H_n\} \hookrightarrow_h G$ under the assumptions of Proposition \ref{prop}, by Remark 4.26 of \cite{DGO}, $H_i \hookrightarrow_h G$ for all $i =1,2,...,n$. By Theorem \ref{tilde}, we can define a finite valued, proper metric $\widetilde{d_i}$ on $H_i$, for all $i=1,2,...,n$, satisfying \begin{center}$\widetilde{d_i}(x, y) \leq \widehat{d_i}(x,y)$ for all $x,y \in H_i$  and for all $i=1,2,...,n$ \hspace{10pt} (2)\end{center}

We can extend both $\widehat{d_i}$ and $\widetilde{d_i}$ to all cosets $gH_i$ of $H_i$ by setting $\widetilde{d_i}(gx, gy) = \widetilde{d_i}(x, y)$ and $\widehat{d_i}(gx, gy) = \widehat{d_i}(x, y)$ for all $x, y \in H_i$. Let $\widehat{diam}$ (resp. $\widetilde{diam}$) denote the diameter of a subset of $H_i$ or a coset of $H_i$ with respect to the $\widehat{d_i}$ (resp. $\widetilde{d_i}$) metric.\\

Let $$\mathbb{Y} = \{ k H_i \mid k \in K, i=1,2,...,n\}$$ be the set of cosets of all $H_i$ in K. We think of cosets of $H_i$ as a subset of vertices of $\Gamma(G, Z \sqcup \mathcal{H})$. \\

For each $Y \in \mathbb{Y}$, and $A, B \in \mathbb{Y} \backslash \{Y \}$, define
$$d^{\pi}_Y (A, B) = \widetilde{diam}(proj_Y(A) \cup proj_Y(B) ), \hspace{30pt}  (3)$$ where $proj_Y(A)$ is defined as in Definition \ref{proj}. The fact that (3) is well-defined will follow from Lemma \ref{oneptbdd} and Lemma \ref{bdd}, which are proved below. We will also proceed to verify the axioms $(A1)-(A4)$ of the Bestvina-Bromberg-Fujiwara construction in the above setting. 

\begin{figure}
  \centering
  \def\svgscale{0.25}
  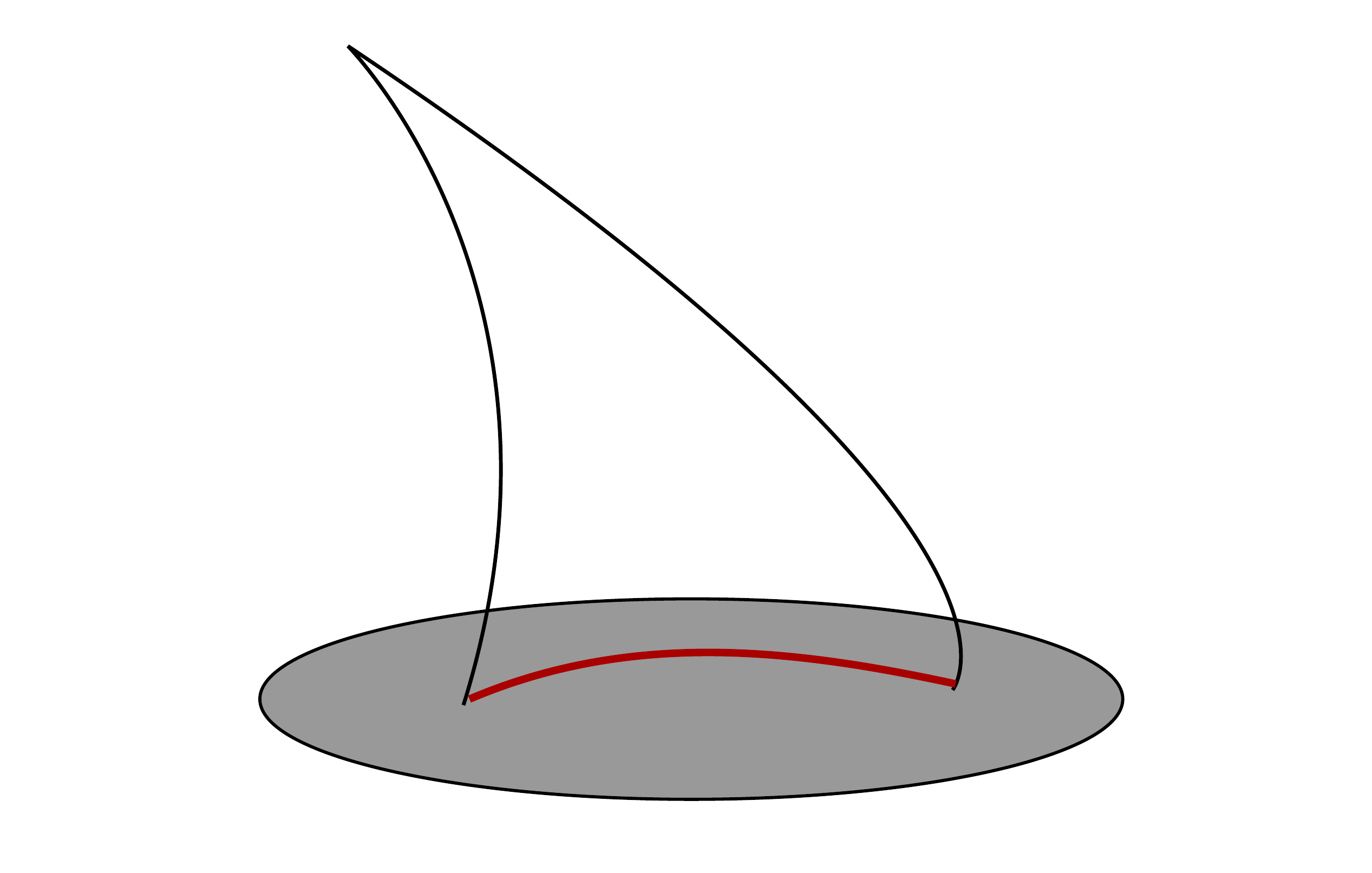
\caption{The bold red edge denotes a single edge labelled by an element of $\mathcal{H}$}
\label{ptbdd}
\end{figure}

\begin{lem} \label{oneptbdd} For any $Y \in \mathbb{Y}$ and any $x \in G$, $\widetilde{diam}(proj_Y(x))$ is bounded. \end{lem}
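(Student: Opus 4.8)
The plan is to show that the nearest-point projection of a single vertex $x$ onto a coset $Y = gH_i$ has bounded diameter in the $\widetilde{d_i}$ metric, and by inequality (2) it suffices to bound the diameter in the stronger $\widehat{d_i}$ metric. Fix $x\in G$ and suppose $y, y' \in proj_Y(x)$; I want a uniform bound on $\widehat{d_i}(y, y')$. First I would take $\dzh$-geodesics $p$ from $x$ to $y$ and $q$ from $x$ to $y'$ in $\Gamma(G, Z\sqcup\mathcal H)$, together with the single $\mathcal H$-edge $e$ from $y$ to $y'$ labelled by $y^{-1}y' \in H_i$ (after translating, $g^{-1}y, g^{-1}y' \in H_i$, so $y^{-1}y'\in H_i$). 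These three paths form a geodesic triangle (an $n$-gon with $n=3$) once we observe that $p$ and $q$ really are geodesics and $e$ is a single edge hence trivially geodesic.

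The key step is to argue that $e$ is an \emph{isolated} $H_i$-component of this triangle, so that Lemma \ref{isolated} applies and yields $\widehat{d_i}(y, y') = \widehat{d_{H_i}}(e_-, e_+) \le 3C$ for the constant $C$ from Lemma \ref{isolated}. To see isolation: the only other $H_i$-components that could be connected to $e$ would lie on $p$ or $q$; if such a component on $p$ (say) were connected to $e$, its endpoints would lie in the coset $gH_i = Y$. But $p$ is a geodesic from $x$ into $Y$ ending at a nearest point $y$, so $p$ cannot revisit $Y$ before its endpoint — more precisely, if $p$ had a vertex $v \in Y$ other than $y$, then the subpath of $p$ from $x$ to $v$ has length strictly less than $|p| = \dzh(x,Y)$, contradicting that $y$ realizes the distance. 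The same applies to $q$. One must also handle the possibility that a component of $p$ or $q$ is connected to $e$ via a path touching an \emph{interior} vertex of that component lying in $Y$: since a geodesic's $H_i$-components are single edges, such an interior vertex is actually an endpoint, and the same distance argument rules it out. Hence $e$ is isolated, Lemma \ref{isolated} applies with $n = 3$, and we get $\widehat{diam}(proj_Y(x)) \le \widehat{d_{H_i}}(y,y') \le 3C$, so by (2), $\widetilde{diam}(proj_Y(x)) \le 3C$ as well.

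The main obstacle I anticipate is the careful verification that $e$ is isolated — in particular, making sure that no $H_i$-component on $p$ or $q$ is connected to $e$, which hinges precisely on the nearest-point-projection property (that is why the lemma is stated for projections rather than arbitrary points of $Y$). A secondary subtlety is that $proj_Y(x)$ could in principle contain more than two points; but since a diameter bound is a statement about all pairs, it is enough to bound $\widehat{d_i}(y,y')$ for an arbitrary pair $y, y' \in proj_Y(x)$ as above, and the bound $3C$ is independent of the pair. I would also note at the start that $proj_Y(x)$ is nonempty because, as remarked after Definition \ref{proj}, nearest-point projections exist in the combinatorial graph $\Gamma(G, Z\sqcup\mathcal H)$.
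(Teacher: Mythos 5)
Your proposal is correct and follows essentially the same route as the paper: reduce to $\widehat{d_i}$ via (2), form the geodesic triangle with sides $p$, $q$ and the single $H_i$-edge $e$ from $y$ to $y'$, show $e$ is isolated using the nearest-point property of $p$ and $q$, and apply Lemma \ref{isolated} with $n=3$ to get the bound $3C$. The only (immaterial) difference is that the paper explicitly disposes of the degenerate case $x \in Y$ at the start, which your pairwise formulation handles implicitly.
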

\begin{proof} By (2), it suffices to prove that $\widehat{diam}(proj_Y(x))$ is bounded. Let $y, y' \in proj_Y(x)$. Then $\dzh(x, y) = \dzh(x, y') = \dzh(x, Y)$.  Without loss of generality, $x \notin Y$, else the diameter is zero.

Let $Y = gH_i$. Let $e$ denote the edge connecting $y$ and $y'$, and labelled by an element of $H_i$. Let $p$ and $q$ denote the geodesics between the points $x$ and $y$, and $x$ and $y'$ respectively. (see Fig.\ref{ptbdd})

Consider the geodesic triangle $T$ with sides $e, p, q$. Since $p$ and $q$ are geodesics between the point $x$ and $Y$, $e$ is an isolated component in $T$, i.e., $e$ cannot be connected to either $p$ or $q$. Indeed if $e$ is connected to, say, a component of $p$, then that would imply that $e_+$ and $e_{-}$ are in $Y$,  i.e., the geodesic $p$ passes through a point of $Y$ before $y$. But then $y$ is not the nearest point from $Y$ to $x$, which is a contradiction. By Lemma \ref{isolated}, $\widehat{d}_i(y, y') \leq 3C$. Hence $$\widehat{diam}(proj_Y(x)) \leq 3C.$$ \end{proof}

\begin{lem} \label{bdd} For every pair of distinct elements $A, Y \in \mathbb{Y}$, $\widehat{diam}(proj_Y(A)) \leq 4C$, where $C$ is the constant as in Lemma \ref{isolated}. As a consequence,  $\widetilde{diam}(proj_Y(A))$ is bounded. \end{lem}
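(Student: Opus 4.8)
The plan is to mimic the proof of Lemma~\ref{oneptbdd}, replacing the geodesic triangle used there by a geodesic quadrilateral. Fix distinct $Y = gH_i$ and $A = kH_j$ in $\mathbb{Y}$. By inequality (2) it is enough to bound $\widehat{d_i}(y,y')$ for arbitrary $y, y' \in proj_Y(A)$, so I would choose $a, a' \in A$ with $y \in proj_Y(a)$ and $y' \in proj_Y(a')$; we may assume $y \neq y'$, since otherwise there is nothing to prove.

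First I would assemble the quadrilateral. Let $p$ and $p'$ be geodesics in $\Gamma(G, Z \sqcup \mathcal H)$ from $a$ to $y$ and from $a'$ to $y'$ respectively; let $f$ be the edge from $y$ to $y'$ labelled by $y^{-1}y' \in H_i$ (which exists because $y, y' \in Y$); and let $e$ be the edge from $a$ to $a'$ labelled by $a^{-1}a' \in H_j$ (one omits $e$, and works with a triangle, when $a = a'$). Then $e\, p'\, f^{-1}\, p^{-1}$ is a closed path which is an $n$-gon with $n \le 4$ and geodesic sides. The heart of the argument is the claim that $f$ is an \emph{isolated} $H_i$-component of this $n$-gon. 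To see that $f$ is an $H_i$-component at all, note that the edge of $p$ incident to $y$ cannot be an $H_i$-edge: its other endpoint would then be a vertex of $p$ lying in the coset $Y$ and strictly closer to $a$ than $y$, contradicting $y \in proj_Y(a)$; the same reasoning at $y'$ rules out the edge of $p'$ incident to $y'$ being an $H_i$-edge, so $f$ does not extend into either geodesic side even after a cyclic shift. For isolation: an $H_i$-component $c$ of $p$ connected to $f$ would have both endpoints in $Y$, so $p$ would again meet $Y$ at a point strictly closer to $a$ than $y$, which is impossible; likewise for $p'$; and $f$ cannot be connected to $e$, because if $i \ne j$ then $e$ is not an $H_i$-subpath, while if $i = j$ the endpoints of $e$ lie in $A = kH_i$, a coset different from $Y = gH_i$ since $A \ne Y$.

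With the claim in hand, Lemma~\ref{isolated} applied to this $n$-gon ($n \le 4$) and the isolated component $f$ yields $\widehat{d_i}(y,y') = \widehat{d_i}(f_-, f_+) \le Cn \le 4C$. Since $y, y'$ were arbitrary, $\widehat{diam}(proj_Y(A)) \le 4C$, and then $\widetilde{diam}(proj_Y(A)) \le \widehat{diam}(proj_Y(A)) \le 4C < \infty$ by (2), which also retroactively makes the quantity $d^{\pi}_Y(A,B)$ in (3) well defined. The only point requiring care is the verification that $f$ is isolated, and this is a direct adaptation of the corresponding step in Lemma~\ref{oneptbdd}; I do not anticipate any genuine obstacle beyond bookkeeping the (possibly degenerate) sides of the quadrilateral.
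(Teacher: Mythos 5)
Your proposal is correct and follows essentially the same argument as the paper: form the geodesic quadrilateral with sides the two projection geodesics, the $H_i$-edge between the projection points, and the $H_j$-edge in $A$, show the $H_i$-edge is isolated (minimality of the projection geodesics rules out connection to $p$, $p'$; distinctness of the cosets rules out connection to the $A$-side), and apply Lemma \ref{isolated} to get the bound $4C$. Your extra care with degenerate sides and with checking that the edge is genuinely an $H_i$-component only fills in details the paper leaves implicit.
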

\begin{proof} let $Y=gH_i$ and $A= fH_j$. Let $y_1, y_2 \in proj_Y(A)$. Then there exist $a_1, a_2 \in A$ such that $\dzh(a_1,y_1) =\dzh(a_1,Y)$ and $\dzh(a_2, y_2) = \dzh(a_2, Y)$. Now $y_1$ and $y_2$ are connected by a single edge $e$, labelled by an element of $H_i$, and similarly, $a_1$ and $a_2$ are connected by an edge $f$, labelled by an element of $H_j$ (see Fig.\ref{wd}). Let $p$ and $q$ denote geodesics that connect $y_1 , a_1$ and $y_2 ,a_2$ respectively. We note that $p$ and/or $q$ may be trivial paths (consisting of a single point), but this does not alter the proof.

\begin{figure}
  \centering
  \def\svgscale{0.35}
  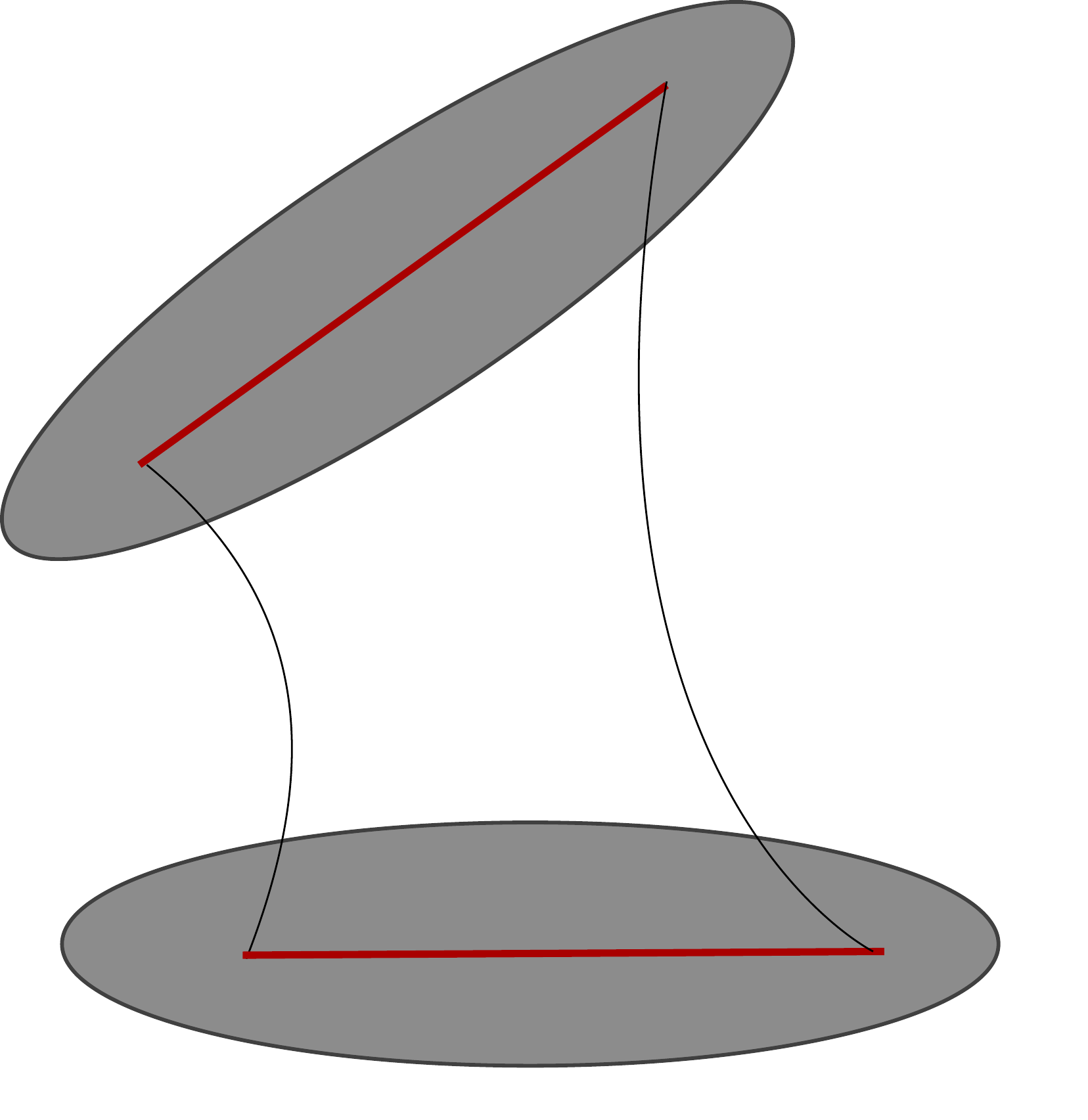
\caption{Lemma \ref{bdd}}
\label{wd}
\end{figure}

Consider $e$  in the quadrilateral $Q$ with sides $p, f, q, e$. As argued in the previous lemma, since $p$ (respectively $q$) is a path of minimal length between the points $a_1$ (respectively $a_2$) and Y, $e$ cannot be connected to a component of $p$ or $q$.

If $i = j$, then $e$ cannot be connected to $f$ since $A \neq Y$. If $i \neq j$, then obviously $e$ and $f$ cannot be connected. Thus $e$ is isolated in this quadrilateral $Q$. By Lemma \ref{isolated}, $\widehat{d}_i(y_1, y_2) \leq 4C$. Thus $$\widehat{diam}(proj_Y(A)) \leq 4C.$$\end{proof}

\begin{cor} The function $d^{\pi}_Y$ defined by (3) is well-defined. \end{cor}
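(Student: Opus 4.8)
The plan is to verify that the right-hand side of (3) is finite, which is all that ``well-defined'' asks for, since the diameter of a subset of a metric space is unambiguous. First I would record that $proj_Y(A)$ and $proj_Y(B)$ are nonempty subsets of the coset $Y$: nearest point projections exist in the combinatorial graph $\Gamma(G,Z\sqcup\mathcal H)$, so each $proj_Y(a)$ with $a\in A$ is nonempty, and by definition $proj_Y(A)\subseteq Y$ (likewise for $B$).

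Next I would apply Lemma \ref{bdd} to both $A$ and $B$, obtaining $\widehat{diam}(proj_Y(A))\le 4C$ and $\widehat{diam}(proj_Y(B))\le 4C$, hence, by the inequality $\widetilde{d_i}\le\widehat{d_i}$ recorded in (2), also $\widetilde{diam}(proj_Y(A))\le 4C$ and $\widetilde{diam}(proj_Y(B))\le 4C$. It then remains only to bound the $\widetilde{d_i}$-distance between the two clusters inside $Y$. Here I would invoke Theorem \ref{tilde}: $\widetilde{d_i}$ is a \emph{finite-valued} metric on $H_i$, hence also on the coset $Y$ after the $G$-equivariant extension, so for any chosen $y_1\in proj_Y(A)$ and $y_2\in proj_Y(B)$ the number $D:=\widetilde{d_i}(y_1,y_2)$ is finite. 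A triangle-inequality estimate then finishes it: for $x\in proj_Y(A)$ one has $\widetilde{d_i}(x,y_1)\le 4C$, and for $x\in proj_Y(B)$ one has $\widetilde{d_i}(x,y_1)\le\widetilde{d_i}(x,y_2)+\widetilde{d_i}(y_2,y_1)\le 4C+D$, so $proj_Y(A)\cup proj_Y(B)$ sits inside the $\widetilde{d_i}$-ball of radius $4C+D$ about $y_1$, whence
$$ d^{\pi}_Y(A,B)=\widetilde{diam}\bigl(proj_Y(A)\cup proj_Y(B)\bigr)\le 2(4C+D)<\infty . $$

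I do not expect a genuine obstacle; the one point worth stressing is that finiteness is exactly where the modified metric of Theorem \ref{tilde} earns its keep. With the original relative metric $\widehat{d_i}$ used in (3) the argument would fail, since two points of a single coset of $H_i$ need not be at finite $\widehat{d_i}$-distance (as in the example $G=H\ast\mathbb Z$); replacing $\widehat{d_i}$ by $\widetilde{d_i}$ removes this issue while, by (2), leaving intact the uniform diameter bounds coming from Lemmas \ref{oneptbdd} and \ref{bdd}.
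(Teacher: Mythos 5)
Your argument is correct and is essentially the paper's proof: the paper likewise combines the uniform bound of Lemma \ref{bdd} on each projection with the finite-valuedness of $\widetilde{d_i}$ from Theorem \ref{tilde} to conclude that $d^{\pi}_Y$ takes only finite values, merely stating this more tersely. Your extra remarks (nonemptiness of the projections and why $\widehat{d_i}$ alone would not suffice) are accurate elaborations, not a different route.
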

\begin{proof} Since the $\widetilde{d}_i$ metric takes finite values for all $i=1,2,...,n$, using Lemma \ref{bdd}, we have that $d^{\pi}_Y$  also takes only finite values. \end{proof}

\begin{lem} \label{a1a2} The function $d^{\pi}_Y$ defined by (3) satisfies conditions (A1) and (A2) in Definition \ref{pc}\end{lem}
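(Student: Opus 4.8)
The plan is to verify axioms (A1) and (A2) directly from the definition $d^\pi_Y(A,B) = \widetilde{diam}(proj_Y(A) \cup proj_Y(B))$ given in (3), using only elementary properties of diameters of subsets of a metric space. Axiom (A1) is immediate: the set $proj_Y(A) \cup proj_Y(B)$ is literally symmetric in $A$ and $B$, so $d^\pi_Y(A,B) = \widetilde{diam}(proj_Y(A) \cup proj_Y(B)) = \widetilde{diam}(proj_Y(B) \cup proj_Y(A)) = d^\pi_Y(B,A)$. There is nothing more to say here.

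For axiom (A2), I want to show $d^\pi_Y(A,B) + d^\pi_Y(B,C) \geq d^\pi_Y(A,C)$, i.e.,
$$\widetilde{diam}(proj_Y(A) \cup proj_Y(B)) + \widetilde{diam}(proj_Y(B) \cup proj_Y(C)) \geq \widetilde{diam}(proj_Y(A) \cup proj_Y(C)).$$
The key point is that $proj_Y(B)$ is nonempty (the nearest point projection exists in the combinatorial graph $\Gamma(G, Z \sqcup \mathcal{H})$, as noted after Definition \ref{proj}), so pick some $b \in proj_Y(B)$. For the triangle-inequality-type estimate I would take arbitrary points $u, v \in proj_Y(A) \cup proj_Y(C)$ and bound $\widetilde{d_i}(u,v)$. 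If both lie in $proj_Y(A)$ or both in $proj_Y(C)$, then $\widetilde{d_i}(u,v)$ is at most $\widetilde{diam}(proj_Y(A) \cup proj_Y(B))$ or $\widetilde{diam}(proj_Y(B) \cup proj_Y(C))$ respectively, hence at most the left-hand side (diameters are nonnegative). If $u \in proj_Y(A)$ and $v \in proj_Y(C)$ (the mixed case), route through $b$:
$$\widetilde{d_i}(u,v) \leq \widetilde{d_i}(u,b) + \widetilde{d_i}(b,v) \leq \widetilde{diam}(proj_Y(A) \cup proj_Y(B)) + \widetilde{diam}(proj_Y(B) \cup proj_Y(C)).$$
Taking the supremum over all such $u,v$ gives (A2). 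I should note that all the diameters appearing are finite by the preceding corollary, so these inequalities are between real numbers and there is no subtlety with $+\infty$.

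I do not expect a genuine obstacle here — both axioms are formal consequences of the diameter definition plus nonemptiness of the projections. The only thing to be careful about is that $proj_Y(B)$ genuinely is nonempty, which has already been addressed in the discussion following Definition \ref{proj} (distances in a combinatorial graph are discrete, so the infimum defining the nearest point projection is attained). A secondary point worth a sentence is that $u, v$ and $b$ all lie in the same coset $Y = gH_i$, so $\widetilde{d_i}$ is defined on them via the coset extension $\widetilde{d_i}(gx,gy) = \widetilde{d_i}(x,y)$, and it satisfies the triangle inequality because $\widetilde{d_i}$ is a metric on $H_i$ (Theorem \ref{tilde}) and left translation is an isometry for this extended metric. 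With these remarks in place the proof of Lemma \ref{a1a2} is short.
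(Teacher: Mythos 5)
Your proof is correct and is essentially the paper's argument: (A1) by symmetry of the union, and (A2) via the diameter triangle inequality for sets overlapping a common nonempty set $proj_Y(B)$. The paper simply states ``by the triangle inequality'' where you spell out the routing through a point $b \in proj_Y(B)$, the nonemptiness of the projection, and the coset extension of $\widetilde{d_i}$ — useful detail, but the same proof.
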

\begin{proof} (A1) is obviously satisfied. For any $Y \in \mathbb{Y}$ and any $A,B,C \in \mathbb{Y} \backslash \{Y \}$, by the triangle inequality, we have that $$d^{\pi}_Y(A,C) = \widetilde{diam}(proj_Y(A) \cup proj_Y(C))$$  $$\leq \widetilde{diam}(proj_Y(A) \cup proj_Y(B)) + \widetilde{diam}(proj_Y(B) \cup proj_Y(C))$$  $$ =d^{\pi}_Y(A, B) + d^{\pi}_Y(B, C).$$  Thus (A2) also holds. \end{proof}

\begin{lem} \label{a3} The function $d^{\pi}_Y$ from (3) satisfies condition (A3) in Definition \ref{pc} for any $\xi >14C$, where $C$ is the constant from Lemma \ref{isolated}\end{lem}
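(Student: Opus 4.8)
The plan is to verify (A3) by showing that largeness of $d^{\pi}_Y(A,B)$ forces $d^{\pi}_B(A,Y)$ to be small; precisely, I will prove that $d^{\pi}_Y(A,B) > 14C$ implies $d^{\pi}_B(A,Y) \leq 8C$, so that $\min\{d^{\pi}_Y(A,B), d^{\pi}_B(A,Y)\} \leq 14C < \xi$ for every $\xi > 14C$. The first, routine ingredient is that by Lemma \ref{bdd} and the inequality $\widetilde{d_i}\le\widehat{d_i}$ each of $proj_Y(A)$, $proj_Y(B)$, $proj_B(A)$, $proj_B(Y)$ has $\widetilde{diam}\le 4C$; hence a triangle-inequality estimate for $\widetilde{diam}(\,\cdot\cup\cdot\,)$ gives $d^{\pi}_Y(A,B) \leq 8C + \widetilde{d_i}(y_A,y_B)$ for every $y_A \in proj_Y(A)$ and $y_B \in proj_Y(B)$. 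So if $d^{\pi}_Y(A,B) > 14C$ then $\widehat{d_i}(y_A,y_B) \geq \widetilde{d_i}(y_A,y_B) > 6C$ for all such $y_A,y_B$; in particular, for every $a \in A$ and $b \in B$, any nearest points of $Y$ to $a$ and to $b$ are at $\widehat{d_i}$-distance strictly more than $6C$.

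The geometric heart is the claim that, under the hypothesis $d^{\pi}_Y(A,B) > 14C$, for all $a \in A$ and $b \in B$ any geodesic $[a,b]$ in $\Gamma(G,Z\sqcup\mathcal{H})$ contains an $H_i$-edge both of whose endpoints lie in $Y = gH_i$. To see this, choose $y \in proj_Y(a)$ and $y' \in proj_Y(b)$; the previous paragraph gives $y \neq y'$, so there is an $H_i$-edge $e$ from $y$ to $y'$. Consider the geodesic quadrilateral with consecutive sides $[a,y]$, $e$, $[y',b]$, $[b,a]$. Since $y$ (resp. $y'$) is a nearest point of $Y$ to $a$ (resp. $b$), neither the edge of $[a,y]$ incident to $y$ nor the edge of $[y',b]$ incident to $y'$ can be an $H_i$-edge (otherwise that geodesic would reach $Y$ before its endpoint), so $e$ is a full $H_i$-component of the quadrilateral; the same minimality rules out $e$ being connected to a component of $[a,y]$ or of $[y',b]$, exactly as in the proofs of Lemmas \ref{oneptbdd} and \ref{bdd}. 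If $e$ were isolated, Lemma \ref{isolated} would force $\widehat{d_i}(y,y') \leq 4C$, contradicting $\widehat{d_i}(y,y') > 6C$. Hence $e$ is connected to a component of $[b,a]$, which is precisely the assertion that $[a,b]$ runs through an $H_i$-edge with both endpoints in $Y$.

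To finish, fix any $a \in A$ and let $b$ be a nearest point of $B$ to $a$ (such a point exists since $\dzh$ is a combinatorial metric), so $b \in proj_B(a) \subseteq proj_B(A)$. By the claim, a geodesic $[a,b]$ passes through a vertex $v \in Y$; let $[v,b]$ be the terminal subpath of $[a,b]$, which is a geodesic from $v$ to $b$. A one-line triangle-inequality argument — any $b^{*}\in B$ with $\dzh(v,b^{*}) < \dzh(v,b)$ would give $\dzh(a,b^{*}) < \dzh(a,b) = \dzh(a,B)$ — shows that $b$ is also a nearest point of $B$ to $v$, i.e. $b \in proj_B(v) \subseteq proj_B(Y)$. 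Thus $proj_B(A) \cap proj_B(Y) \neq \emptyset$, and since each of these two sets has $\widetilde{diam}\le 4C$ we conclude $d^{\pi}_B(A,Y) = \widetilde{diam}(proj_B(A)\cup proj_B(Y)) \leq 8C < \xi$, proving (A3). The main obstacle is the geometric claim of the second paragraph: choosing the correct polygon and checking, via Lemma \ref{isolated}, that the $H_i$-edge $e$ inside $Y$ is an isolated component unless the geodesic $[a,b]$ itself passes through $Y$; the remaining steps (the $\widetilde{diam}$-versus-$\widehat{d_i}$ comparisons and the final triangle inequality) are bookkeeping.
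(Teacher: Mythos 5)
Your proof is correct, and while its first half coincides with the paper's argument, the second half takes a genuinely different route. Both proofs begin the same way: use Lemma \ref{bdd} together with $\widetilde{d_i}\le\widehat{d_i}$ to see that largeness of the projection onto $Y$ forces the two nearest-point projections onto $Y$ to be at $\widehat{d_i}$-distance more than $6C$ (the paper gets $>6C$ from a $14C$ threshold in exactly the same way), and then apply Lemma \ref{isolated} plus the nearest-point and distinct-coset arguments of Lemmas \ref{oneptbdd} and \ref{bdd} to conclude that the $H_i$-edge between these projections cannot be isolated in a suitable polygon. Where you diverge is in the choice of polygon and in how the bound on $d^{\pi}_B(A,Y)$ is extracted. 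The paper builds a hexagon on auxiliary points $a'\in A$ and $b'\in proj_B(a')$ (with extra $H_j$- and $H_k$-edges), finds the $H_i$-component $h'$ on the geodesic $[a',b']$, and then runs a second application of Lemma \ref{isolated} to a quadrilateral to show that every $b'\in proj_B(A)$ satisfies $\widehat{d_k}(b,b')\le 4C$ for the fixed nearest point $b\in proj_B(Y)$. You instead take the quadrilateral on $[a,y]$, $e$, $[y',b]$, $[b,a]$ with $b\in proj_B(a)$, conclude that the geodesic $[a,b]$ must pass through a vertex $v\in Y$, and then use the elementary observation that a nearest point of $B$ to $a$ is also a nearest point of $B$ to any vertex on a geodesic $[a,b]$, so $b\in proj_B(A)\cap proj_B(Y)$ and the union has $\widetilde{diam}\le 8C$. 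Your version needs only one application of Lemma \ref{isolated} per pair and no auxiliary points $a',b'$, at the cost of proving the slightly weaker statement that the two projection sets intersect rather than the paper's quantitative $4C$-closeness of every point of $proj_B(A)$ to $b$; both yield the required $8C<\xi$ bound, and your intermediate claim (geodesics from $A$ to $B$ must cross $Y$ when the projection onto $Y$ is large) is a clean Behrstock-type statement of independent use.
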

\begin{proof} By (2), it suffices to prove that $$ min\{ \widehat{diam}(proj_Y(A) \cup proj_Y(B)), \widehat{diam}(proj_B(A) \cup proj_B(Y)) \} < \xi.$$

 Let $A, B \in \mathbb{Y} \backslash \{Y\}$ be distinct. Let $Y =gH_i$, $A =fH_j$ and $B= tH_k$. If $\widehat{diam}(proj_Y(A) \cup proj_Y(B)) \leq 14C$, then we are done. So let $$\widehat{diam}(proj_Y(A) \cup proj_Y(B))) > 14C. \hspace{25pt} (4)$$

\begin{figure}
\centering
\def\svgscale{0.5}
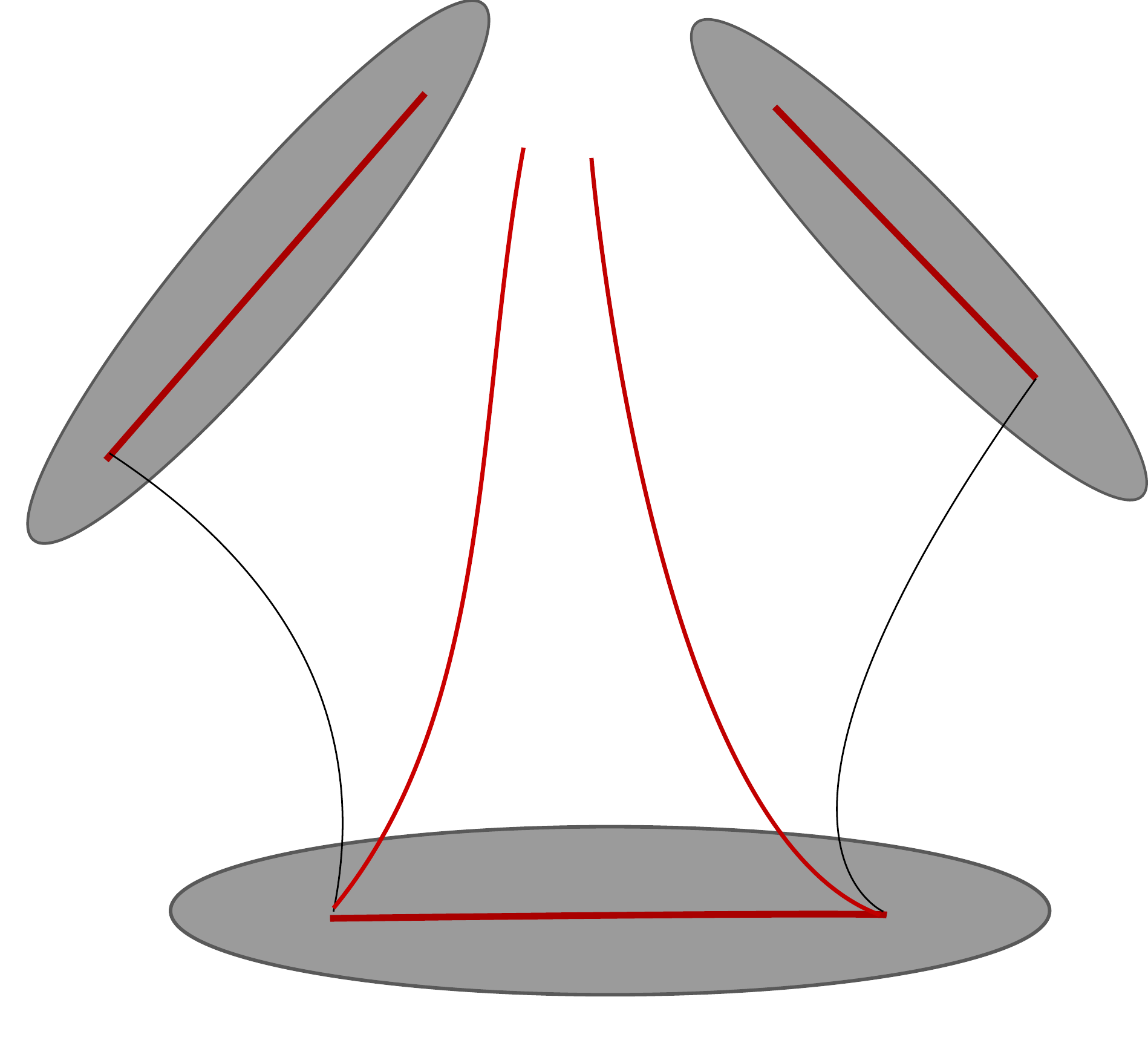
\caption{Condition $(A3)$}
\label{a3fig}
\end{figure}

Choose $a \in A, b \in B$, and $x,y \in Y$ such that $\dzh(A, Y) = \dzh(a,x)$ and $\dzh(B, Y) = \dzh(b,y)$. In particular, $$x \in proj_Y(A),y \in proj_Y(B) \hspace{25pt} (5)$$ and $b \in proj_B(Y)$. Let $p, q$ denote the geodesics connecting $a,x$ and $b, y$ respectively. Let $h_1$ denote the edge connecting $x$ and $y$, which is labelled by an element of $H_i$.

By (5), we have that $$\widehat{diam}(proj_Y(A) \cup proj_Y(B))  \leq \widehat{diam}(proj_Y(A)) + \widehat{diam}(proj_Y(B)) + \widehat{d}_i(x,y).$$ Combining this with (4) and Lemma \ref{bdd}, we get $$\widehat{d}_i(x,y) \geq \widehat{diam}(proj_Y(A) \cup proj_Y(B)) - \widehat{diam}(proj_Y(A))  -  \widehat{diam}(proj_Y(B))$$  $$> 14C - 8C = 6C.$$ 

\begin{figure}
  \centering
  \def\svgscale{0.325}
  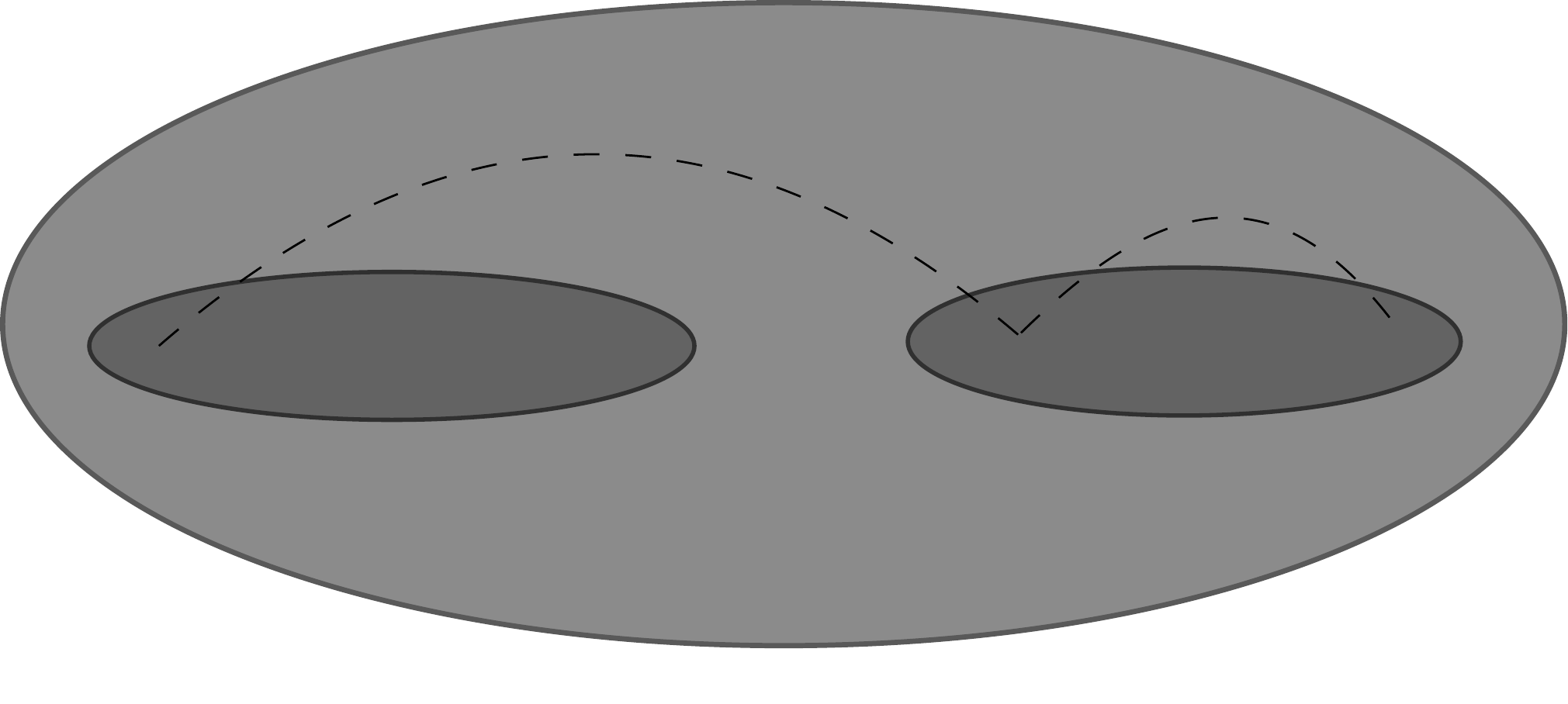
\caption{Estimating the distance between arbitrary points $b$ and $c$ of $proj_B(A)$ and $proj_B(Y)$ resp.}
\label{dist}
\end{figure}

Choose any $a' \in A$ and $b' \in proj_B(a')$, i.e.,: $\dzh(a', B) =\dzh(a', b')$; (see Fig.\ref{a3fig}). (Note that if $a'=a$, the following arguments still hold). Let $h_2$ and $h_3$ denote the edges connecting $a,a'$ and $b, b'$; which are labelled by elements of $H_j$ and $H_k$ respectively. Let $r$ denote the geodesic connecting $a'$ and $b'$. Consider the geodesic hexagon $W$ with sides $p, h_1, q, h_3, r, h_2$. Then $h_1$ is not isolated in $W$, else by Lemma \ref{isolated}, $\widehat{d}_i(x,y) \leq 6C$, a contradiction.

Thus $h_1$ is connected to another $H_i$-component in $W$. Arguing as in Lemma \ref{bdd}, $h_1$ cannot be connected to a component of $p$ or $q$. Since $A,B,Y$ are all distinct, $h_1$ cannot be connected to $h_2$ or $h_3$. So $h_1$ must be connected to an $H_i$-component on the geodesic $r$. Let this edge be $h'$ with end points $u$ and $v$ as shown in Fig \ref{a3fig}. Let $s$ denote the edge (labeled by an element of $H_i$), that connects $y, v$. Let $r'$ denote the segment of $r$ that connects $v$ to $b'$. Then $r'$ is also a geodesic.

Consider the quadrilateral $Q$ with sides $r', h_3, q, s$. As argued before, $h_3$ cannot be connected to $r', q$ or $s$. Thus $h_3$ is isolated in $Q$. By Lemma \ref{isolated}, $$\widehat{d}_k(b, b') \leq 4C.$$

Since the above argument holds for any $a' \in A$ and for $b' \in proj_B(A)$, we have that $\widehat{d_k}(b, b') \leq 4C$. Using Lemma \ref{bdd} (see Fig.\ref{dist}), we get that $$\widehat{diam}(proj_B(Y) \cup  proj_B(A))  \leq 4C + 4C = 8C < \xi.$$  \end{proof}

\begin{figure}
\centering
\def\svgscale{0.45}
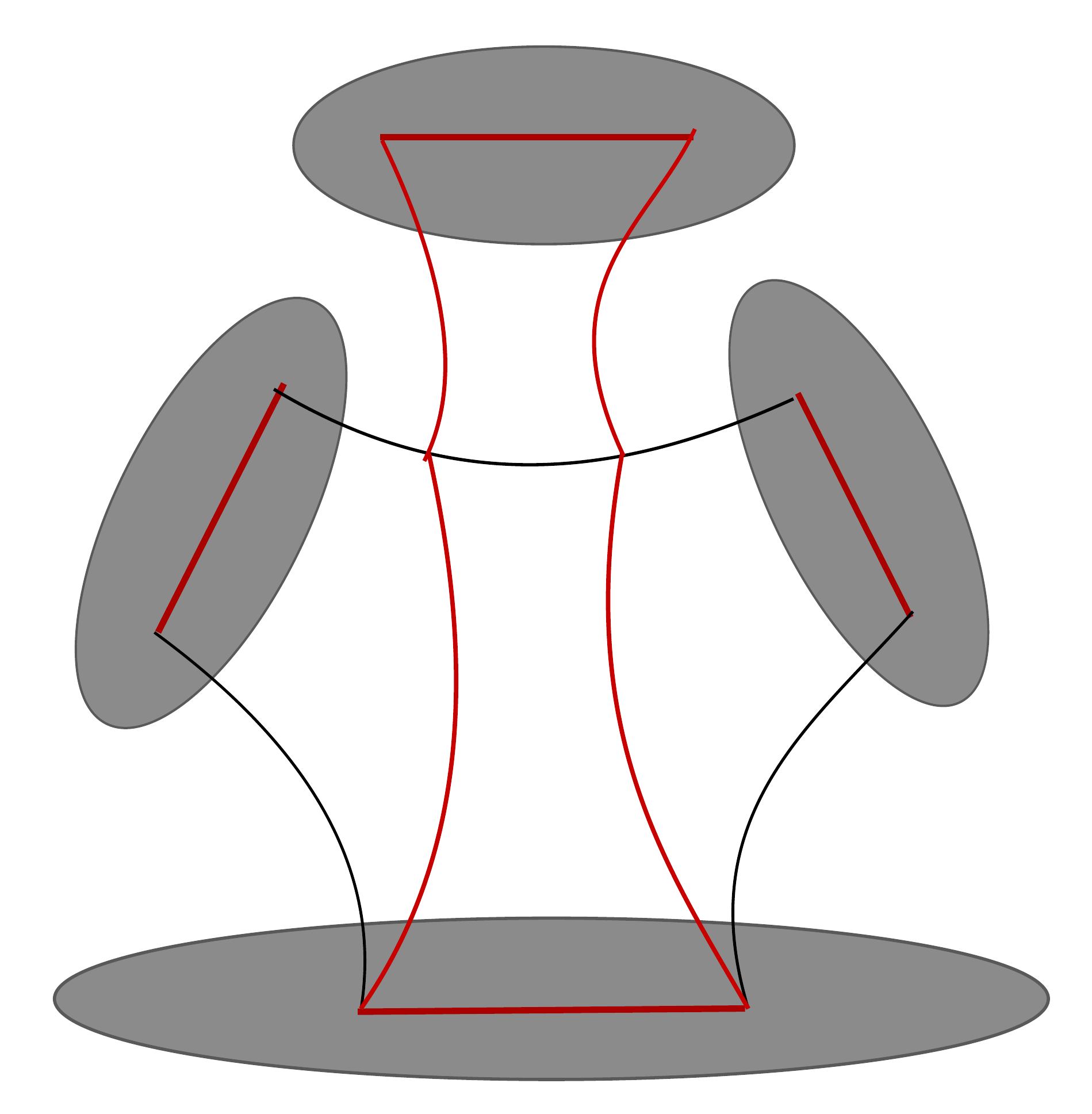 
\caption{Condition $(A4)$}
\label{a4fig}
\end{figure}

\begin{lem} \label{a4} The function $d^{\pi}_Y$ defined by (3)  satisfies condition (A4) in Definition \ref{pc}, for $\xi > 14C$, where $C$ is the constant from Lemma \ref{isolated}\end{lem}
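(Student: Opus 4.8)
The plan is to fix a single geodesic joining a point of $A$ to a point of $B$, to show that \emph{every} coset $Y$ with $d^{\pi}_Y(A,B)\ge\xi$ must be crossed by this one geodesic, and then to bound the number of such cosets by the geodesic's length; this is the configuration drawn in Figure \ref{a4fig}.

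First I would reduce to the relative metric exactly as in the previous two lemmas. If $Y=gH_i$ satisfies $d^{\pi}_Y(A,B)\ge\xi$, then by (2) we get $\widehat{diam}(proj_Y(A)\cup proj_Y(B))\ge\xi>14C$, and since $\widehat{diam}(proj_Y(A))\le 4C$ and $\widehat{diam}(proj_Y(B))\le 4C$ by Lemma \ref{bdd}, the diameter of the union is realized by a pair consisting of one point of $proj_Y(A)$ and one point of $proj_Y(B)$. By the same triangle inequality argument as in the proof of Lemma \ref{a3}, it follows that $\widehat{d}_i(x,y)>6C$ for \emph{every} $x\in proj_Y(A)$ and \emph{every} $y\in proj_Y(B)$.

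The main step is to show that such a $Y$ must meet a fixed geodesic. I would fix once and for all $a\in A$, $b\in B$ (say the coset representatives) and a geodesic $\gamma=[a,b]$ in $\Gamma(G,Z\sqcup\mathcal{H})$. Taking $x\in proj_Y(a)$ and $y\in proj_Y(b)$, letting $h_1$ be the $H_i$-edge from $x$ to $y$ (which is a genuine edge since $\widehat{d}_i(x,y)>6C$ forces $x\neq y$), and letting $p=[a,x]$, $q=[y,b]$ be geodesics, I would consider the geodesic quadrilateral with sides $p$, $h_1$, $q$, $\gamma$. If $h_1$ were isolated in it, Lemma \ref{isolated} would force $\widehat{d}_i(x,y)\le 4C$, contradicting the previous paragraph; so $h_1$ is connected to another $H_i$-component of the quadrilateral. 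As in the proofs of Lemmas \ref{bdd} and \ref{a3}, it cannot be connected to a component of $p$ or $q$, because $x$ and $y$ are nearest points of $Y$ to $a$ and $b$ respectively; hence it is connected to an $H_i$-component of $\gamma$, so $\gamma$ has a vertex lying in $Y=gH_i$. I expect this to be the crux — the rest is bookkeeping — but it is no harder than the isolated-component arguments already carried out, the only care being to rule out attachment of $h_1$ to $p$ or $q$.

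Finally I would count. By the previous step, every $Y\in\mathbb{Y}$ with $d^{\pi}_Y(A,B)\ge\xi$ contains a vertex of the single fixed geodesic $\gamma$; a vertex $v$ of $\Gamma(G,Z\sqcup\mathcal{H})$ lies in at most $n$ members of $\mathbb{Y}$, namely $vH_1,\dots,vH_n$ when $v\in K$; and $\gamma$ has $\dzh(a,b)+1$ vertices. Therefore $\#\{Y\in\mathbb{Y}\mid d^{\pi}_Y(A,B)\ge\xi\}\le n(\dzh(a,b)+1)<\infty$, which is exactly condition (A4).
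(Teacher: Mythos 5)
Your proof is correct and follows essentially the same strategy as the paper: reduce to $\widehat{d}_i(x,y)>6C$ via Lemma \ref{bdd}, show $h_1$ cannot be isolated in a geodesic polygon built on a fixed geodesic joining $A$ to $B$ (so it must attach to an $H_i$-component of that fixed geodesic), and then count. The only differences are cosmetic: the paper closes the polygon through auxiliary points $a'\in A$, $b'\in proj_B(a')$, obtaining a hexagon and counting the distinct $H_i$-components of $r=[a',b']$ (each such component determining $Y$ uniquely), whereas you use a quadrilateral on the fixed geodesic $[a,b]$ and count its vertices with multiplicity at most $n$; both yield the required finiteness.
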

\begin{proof}If $d_{Y}^{\pi}(A,B) \geq \xi$, then by (2), $\widehat{diam}(proj_Y(A) \cup proj_Y(B)) \geq  d_{Y}^{\pi}(A,B) \geq \xi$. Thus it suffices to prove that the number of elements $Y \in \mathbb{Y}$ satisfying $$ \widehat{diam}(proj_Y(A) \cup proj_Y(B)) \geq \xi \hspace{25pt} (6)$$ is finite. Let  $A,B\in \mathbb{Y}, A= fH_j$ and $B=tH_k$. Let $Y \in \mathbb{Y} \backslash \{A,B\}, Y= gH_i$. Let $a' \in A, b' \in proj_B(a')$. Arguing as in Lemma \ref{a3}, if  is such that $\widehat{diam}(proj_Y(A) \cup proj_Y(B)) \geq \xi$, then for any $a \in A, b \in B, x \in proj_Y(a), y \in proj_Y(b)$, we have that $\widehat{d}_i(x,y) > 6C$.

Let $h_1$ denote the edge connecting $x, y$, which is labelled by an element of $H_i$ (see Fig.\ref{a4fig}). Let $h_2$  denote the edge connecting $a, a'$, which is labelled by an element of $H_j$ and $h_3$  denote the edge connecting $b, b'$, which is labelled by an element of $H_k$. Let $p$ be a geodesic between $a, x$, let $q$ be a geodesic between $b, y$, and let $r$ be a geodesic between $a', b'$. As argued in Lemma \ref{a3}, we can show that $h_1$ cannot be isolated in the hexagon $W$ with sides $p, h_1, q, h_2, r, h_3$ and must be connected to an $H_i$-component of $r$, say the edge $h'$.

We claim that the edge $h'$ uniquely identifies $Y$. Indeed, let $Y'$ be a member of $\mathbb{Y}$, with  elements $x',y'$ connected by an edge $e$ (labelled by an element of the corresponding subgroup). Suppose that $e$ is connected to $h'$. Then we must have that $Y'$ is also a coset of $H_i$. But cosets of a subgroup are either disjoint or equal, so $Y =Y'$. Thus, the number of $Y \in \mathbb{Y}$ satisfying (6) is bounded by the number of distinct $H_i$-components of $r$, which is finite. \end{proof}

\subsection{Choosing a relative generating set} \label{X}
We now have the necessary details to choose a relative generating set $X$ which will satisfy conditions (a) and (b) of Proposition \ref{prop}. This set will later be altered slightly to obtain another relative generating set which will satisfy all three conditions of Proposition \ref{prop}. We will repeat arguments similar to those from pages 60-63 of \cite{DGO}. 

Recall that $\mathcal{H} = \bigsqcup_{i =1}^{n} H_i$, and $Z$ is the relative generating set such that $\{H_1, H_2,...,H_n\} \hookrightarrow_h (G,Z)$. Let $P_J(\mathbb{Y})$ be the projection complex corresponding to the vertex set $\mathbb{Y}$ as specified in section \ref{setting} and the constant $J$ is as in Proposition \ref{K}, i.e.,  $P_J(\mathbb{Y})$ is connected and a quasi-tree. Let $d_P$ denote the combinatorial metric on$P_J(\mathbb{Y})$. Our definition of projections is $K$- equivariant and hence the action of $K$ on $\mathbb{Y}$  extends to a cobounded action of $K$ on $P_J(\mathbb{Y})$.

In what follows, by considering $H_i$ to be vertices of the projection complex $P_J(\mathbb{Y})$, we denote by $star(H_i)$, the set  $$\{kH_j \in \mathbb{Y} \left| d_P(H_i, kH_j) =1 \right. \}.$$

We choose the set $X$ in the following manner. For all $i=1,2,...,n$ and each edge $e$ in $star(H_i)$ in  $P_J(\mathbb{Y})$ that connects $H_i$ to $kH_j$, choose an element $x_e \in H_i k H_j$ such that $$\dzh(1, x_e) = \dzh(1, H_i k H_j).$$ 

We say that such an $x_e$ has \textit{type $(i, j)$}. Since $H_i \leq K$ for all $i$, $x_e \in K$. 
We observe the following:\begin{itemize}
\item[(a)] For each $x_e$ of type $(i,j)$ as above, there is an edge in  $P_J(\mathbb{Y})$ connecting $H_i$ and $x_eH_j$. Indeed if $x_e = h_1kh_2$, for $h_1 \in H_i, h_2 \in H_j$,  then $$d_P(H_i, x_eH_j) = d_P(H_i, h_1kh_2H_j) = d_P(H_i, h_1kH_j)$$  $$= d_P(h_1 ^{-1}H_i, kH_j) = d_P(H_i, kH_j) = 1.$$

\item[(b)] For each edge $e$ connecting $H_i$ and $kH_j$, there is a  dual edge $f$ connecting $H_j$ and $k^{-1}H_i$. We will  choose the elements $x_e$ and $x_f$ to be mutually inverse. In particular, the set given by $$X = \{ x_e \neq 1 | e\in star(H_i), i=1,2,...,n \} \hspace{25pt} (7)$$ is symmetric, i.e., closed under taking inverses. Obviously, $X \subset K$. 

\item[(c)] If $x_e \in X$ is of type $(i, j)$, then $x_e$ is not an element of $H_i$ or $H_j$. Indeed if $x_e =h_1kh_2 \in H_i$ for some $h_1 \in H_i$ and some $h_2 \in H_j$, then $ k= hf$ for some $h \in H_i$ and some $f \in H_j$. Consequently $$\dzh(1, H_i k H_j) = \dzh(1, H_i H_j) = 0 = \dzh(1, x_e),$$ which implies $x_e =1$, which is a contradiction to (7).
\end{itemize} 

\begin{lem}[cf. Lemma 4.49 in \cite{DGO}] The subgroup $K$ is generated by $X$ together with the union of all $H_i$'s. Further, the Cayley graph $\Gamma(K, X \sqcup \mathcal{H})$ is quasi-isometric to  $P_J(\mathbb{Y})$, and hence a quasi-tree. \end{lem}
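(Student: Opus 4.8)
The plan is to show two things: first, that $K$ is generated by $X \sqcup \mathcal{H}$, and second, that the natural map from $\Gamma(K, X \sqcup \mathcal{H})$ to $P_J(\mathbb{Y})$ sending a vertex $k \in K$ to the coset $kH_1$ (say, or more carefully an orbit representative) is a quasi-isometry. Since $P_J(\mathbb{Y})$ is a quasi-tree by Proposition \ref{K}, the second statement immediately gives that $\Gamma(K, X \sqcup \mathcal{H})$ is a quasi-tree, and transitivity of the quasi-isometry relation finishes the lemma.

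For the generation statement, I would argue as in Lemma 4.49 of \cite{DGO}: take any $k \in K$ and consider a geodesic in $P_J(\mathbb{Y})$ from $H_1$ (or whichever $H_i$) to $kH_1$. Since $P_J(\mathbb{Y})$ is connected, such a path exists; its consecutive vertices $H_{i_0}=H_1, c_1 H_{i_1}, c_2 H_{i_2}, \dots, c_m H_{i_m} = kH_1$ are joined by edges. Each edge, after translating by an appropriate group element lying in the subgroup generated so far, corresponds to an edge in $star(H_{i_{\ell-1}})$ and hence (up to multiplying by elements of the $H_i$'s) to one of the chosen generators $x_e \in X$. Composing these, one writes $k$ as a product of elements of $X$ and elements of $\bigcup_i H_i$, up to an element of $\mathrm{Stab}_K(H_1) \cap H_1$-type correction; one has to be a little careful that the cosets $c_\ell H_{i_\ell}$ all lie in $\mathbb{Y}$, i.e. are cosets by elements of $K$, which holds because we start inside $K$ and each generator $x_e$ lies in $K$.

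For the quasi-isometry, I would fix a basepoint and define $\phi\colon \Gamma(K, X\sqcup\mathcal H) \to P_J(\mathbb{Y})$, $\phi(k) = kH_1$. Coboundedness of the $K$-action on $P_J(\mathbb{Y})$ (noted just before the lemma) gives that $\phi$ is quasi-surjective. The Lipschitz bound $d_P(\phi(k),\phi(k')) \le \lambda\, d_{X\sqcup\mathcal H}(k,k') + C$ follows because each generator in $X \sqcup \mathcal{H}$ moves the basepoint coset $H_1$ a bounded $d_P$-distance: an $\mathcal{H}$-edge either fixes the relevant coset or moves it by a bounded amount (using property (a) of the construction, $d_P(H_i, x_eH_j) = 1$, and the axioms controlling how cosets interleave), and an $X$-edge $x_e$ of type $(i,j)$ also moves cosets a bounded $d_P$-amount by the same kind of computation as in observation (a). The reverse inequality $d_{X\sqcup\mathcal H}(k,k') \le \lambda'\, d_P(\phi(k),\phi(k')) + C'$ is the substantive direction: given a geodesic in $P_J(\mathbb{Y})$ between $kH_1$ and $k'H_1$, I would convert each edge of it into a bounded-length word in $X \sqcup \mathcal{H}$, exactly mirroring the generation argument above, so that a $d_P$-path of length $m$ yields a $d_{X\sqcup\mathcal H}$-path of length $O(m)$.

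The main obstacle I expect is the bookkeeping in the reverse inequality — turning a $P_J(\mathbb{Y})$-geodesic into a path in the Cayley graph of comparable length. One must verify that each edge of the projection-complex geodesic, after an appropriate left translation putting one endpoint at a standard coset $H_i$, is realized (up to multiplying by one element of some $H_\lambda$ on each side, which costs at most two $\mathcal H$-edges) by one of the chosen elements $x_e \in X$; the edges of $P_J(\mathbb{Y})$ incident to $H_i$ are exactly indexed by $star(H_i)$, which is precisely how $X$ was chosen, so this should go through, but the translation elements need to be tracked to ensure they stay in $K$ and that the total word length is controlled linearly. This is the analogue of the argument on pages 60--63 of \cite{DGO}, adapted to the present setting where the $H_i$ are hyperbolically embedded rather than geometrically separated; since all the geometric input we need has been packaged into Lemmas \ref{oneptbdd}, \ref{bdd}, \ref{a3}, \ref{a4} and the axioms (A1)--(A4), the remaining work is essentially combinatorial.
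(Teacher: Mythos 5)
Your proposal follows essentially the same route as the paper: the paper also defines $\phi(k)=kH_1$, bounds $d_P(\phi(1),\phi(g))\le (2\,diam(\Sigma)+1)|g|_{X\sqcup\mathcal H}$ by checking that each generator of $X\sqcup\mathcal H$ displaces the base coset a bounded $d_P$-distance, and proves both generation and the reverse bound $|g|_{X\sqcup\mathcal H}\le 3\,d_P(H_1,gH_1)$ by translating each edge $g_iH_{\lambda_i}$ to $g_{i+1}H_{\lambda_{i+1}}$ of a $P_J(\mathbb{Y})$-geodesic back to an edge in $star(H_{\lambda_i})$ and replacing it by $h^{-1}x_ek^{-1}$ (one $X$-letter plus two $\mathcal H$-letters), then concludes via coboundedness and Proposition \ref{K}. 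The bookkeeping you flag as the main obstacle is handled exactly as you anticipate, so the proposal is correct and matches the paper's argument.
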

\begin{proof} Let $\Sigma = \{H_1, H_2,...,H_n\} \subseteq \mathbb{Y}$. Let $diam(\Sigma)$ denote the diameter of the set $\Sigma$ in the combinatorial metric $d_P$. Since $\Sigma$ is a finite set, $diam(\Sigma)$ is finite. Define \begin{center} $\phi\colon K \rightarrow \mathbb{Y}$ as $\phi(k) = k H_1$ \end{center} By Property (a) above, if $x_e \in X$ is of type $(i, j)$, $$d_P(x_eH_1, H_1) \leq d_P(x_eH_1, x_e H_j) + d_P(x_eH_j, H_i) + d_P(H_i,H_1)$$  $$= d_P(H_1, H_j) + 1 + d_P(H_i, H_1) \leq 2 diam(\Sigma) +1.$$ Further, for $h \in H_i$, $$d_P(h H_1, H_1) \leq d_P(hH_1,h H_i) + d_P(h H_i, H_1)$$ $$= d_P(H_1, H_i) + d_P(H_i, H_1) \leq 2 diam(\Sigma).$$ Thus for all $g \in \langle X \cup H_1 \cup H_2...\cup H_n \rangle$, we have $$d_P(\phi(1), \phi(g)) \leq (2diam(\Sigma) +1) |g|_{X \sqcup \mathcal{H}}, \hspace{25pt} (8)$$ where $|g|_{X \sqcup \mathcal{H}}$ denotes the length of $g$ in the generating set $X \cup H_1 \cup H_2...\cup H_n$. (We use this notation for the sake of uniformity). 

Now let $g \in K$ and suppose $d_P(\phi(1), \phi(g)) =r$, i.e., $d_P(H_1, gH_1) = r$. If $r =0$, then $H_1 =gH_1$, thus $g \in H_1$ and $|g|_{X \sqcup \mathcal{H}} \leq 1$. If $ r >0$, consider the geodesic $p$ in  $P_J(\mathbb{Y})$ connecting $H_1$ and $gH_1$. Let $$v_0 = H_1 = g_0 H_1 (g_0 =1), v_1 = g_1 H_{\lambda_1}, v_2 = g_2 H_{\lambda_2},..., v_{r-1} = g_{r-1} H_{\lambda_{r-1}},  v_r = gH_1 $$ be the sequence of vertices of $p$, for some $\lambda_j \in \{1,2,...,n\}$, and some $g_i \in K$ (see Fig.\ref{p}).

\begin{figure}
\centering
\def\svgscale{0.5}
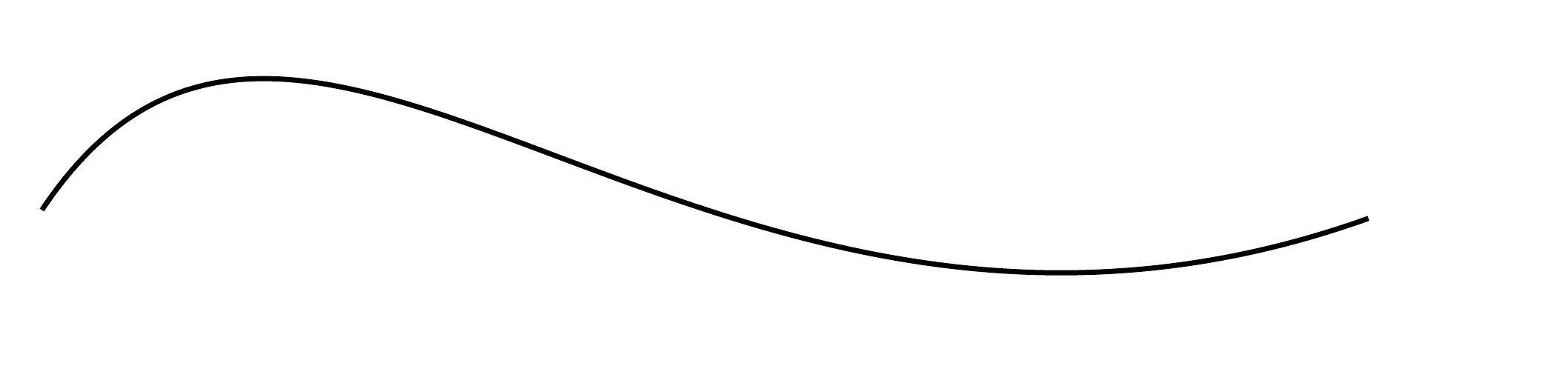
\caption{The geodesic $p$}
\label{p}
\end{figure}

Now $g_iH_{\lambda_i}$ is connected by a single edge to $g_{i+1}H_{\lambda_{i+1}}$. Thus $d_P(g_iH_{\lambda_i}, g_{i+1}H_{\lambda_{i+1}}) =1$, which implies $d_P(H_{\lambda_i}, g_i ^{-1} g_{i+1} H_{\lambda_{i+1}}) =1$. Then there exists $x \in X$ such that $$x \in H_{\lambda_i} g_i ^{-1} g_{i+1} H_{\lambda_{i+1}}$$ and $$\dzh(1, x) = \dzh(1, H_{\lambda_i} g_i ^{-1} g_{i+1} H_{\lambda_{i+1}}).$$ Thus $ x = h g_i ^{-1} g_{i+1} k $ for some $h \in H_{\lambda_i}$ and some $k \in H_{\lambda_{i+1}}$ which implies $g_i ^{-1} g_{i+1} = h^{-1} x k^{-1}$. So $|g_i ^{-1} g_{i+1}|_{X \sqcup \mathcal{H}} \leq 3$, which implies $$|g|_{X \sqcup \mathcal{H}} = \left| \prod _{i=1} ^{r} g_{i-1}^{-1} g_i \right|_{X \sqcup \mathcal{H}} \leq \sum_{i=1} ^{r}\left| g_{i-1}^{-1} g_i \right|_{X \sqcup \mathcal{H}} \leq 3r = 3 d_P(\phi(1), \phi(g)) \hspace{20pt}(9)$$

The above argument also provides a representation for every element $g \in K$ as a product of elements from $X \cup H_1 \cup H_2...\cup H_n$. Thus $K$ is generated by the union of $X$ and all $H_i$'s. By (8) and (9), $\phi$ is a quasi-isometric embedding of $(K, |.|_{X \sqcup \mathcal{H}})$ into $(P_J(\mathbb{Y}), d_P)$ satisfying $$\frac{1}{3}|g|_{X \sqcup \mathcal{H}} \leq d_P(\phi(1),\phi(g)) \leq (2diam(\Sigma) +1) |g|_{X \sqcup \mathcal{H}}.$$ Since $\mathbb{Y}$ is contained in the closed $diam(\Sigma)$-neighborhood of $\phi(K)$, $\phi$ is a quasi-isometry. This implies that $\Gamma (K, X \sqcup \mathcal{H})$  is a quasi-tree.  \end{proof}

Let $\widetilde{d_i}$ denote the modified relative metric on $H_i$ associated with the Cayley graph $\Gamma(G, Z \sqcup \mathcal{H})$ from Theorem \ref{tilde}. Let $\widehat{d_{i}^{X}}$ denote the relative metric on $H_i$ associated with the Cayley graph $\Gamma(K, X \sqcup \mathcal{H}).$ We will now show that $\widehat{d_{i}^{X}}$ is proper for all $ i=1,2,...,n$. We will use the fact that $\widetilde{d_i}$ is proper and derive a relation between $\widetilde{d_i}$ and $\widehat{d_{i}^{X}}$.

\begin{lem}[cf. Lemma 4.50 in \cite{DGO}]\label{alpha} There exists a constant $\alpha$ such that for any $Y \in \mathbb{Y}$ and any $x \in X \sqcup \mathcal{H}$,  if $$\widetilde{diam}(proj_Y\{1,x\}) > \alpha,$$ then $x \in H_j$ and $Y =H_j$ for some $j$. \end{lem}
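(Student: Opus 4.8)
The plan is to prove the contrapositive, splitting on whether $x$ is an $\mathcal H$-letter (so $x\in H_j$ for some $j$) or an $X$-letter (so $x=x_e$ for some edge $e$). All geometry is carried out in $\Gamma(G,Z\sqcup\mathcal H)$ with metric $\dzh$. Since by (2) we have $\widetilde d_i\le\widehat d_i$, it is enough to bound $\widehat{diam}$ in place of $\widetilde{diam}$, except at the one point where the projection-complex function $d^{\pi}_Y$ is used directly. Fixing $Y=gH_i$ and choosing $u\in proj_Y(1)$, $v\in proj_Y(x)$, Lemma \ref{oneptbdd} gives $\widehat{diam}(proj_Y(1)),\widehat{diam}(proj_Y(x))\le 3C$, so in all cases $\widehat{diam}(proj_Y\{1,x\})\le 6C+\widehat d_i(u,v)$ and the whole problem reduces to bounding $\widehat d_i(u,v)$.

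Suppose first $x\in H_j$. Then $1$ and $x$ are joined by a single $H_j$-edge $f$; join $1$ to $u$ and $x$ to $v$ by $\dzh$-geodesics, and $u$ to $v$ by the $H_i$-edge $h_1$, producing a quadrilateral with geodesic sides. Exactly as in the proof of Lemma \ref{bdd}, $h_1$ cannot be connected to an $H_i$-component of the geodesic sides, since that would force one of those geodesics to meet $Y$ strictly before its endpoint in $Y$, contradicting that the endpoint realizes $\dzh(\cdot,Y)$. So either $h_1$ is isolated, in which case Lemma \ref{isolated} gives $\widehat d_i(u,v)\le 4C$, or $h_1$ is connected to $f$; the latter forces $i=j$ and all vertices of $h_1$ and $f$ into a single coset of $H_i$, which (as $1$ lies in it) is $H_i$ itself, so from $u\in gH_i\cap H_i$ we get $Y=H_i=H_j$. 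Hence if $Y\ne H_j$ then $\widehat d_i(u,v)\le 4C$ and $\widetilde{diam}(proj_Y\{1,x\})\le 10C$.

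Now suppose $x=x_e\in X$, of type $(a,b)$; here the target conclusion is impossible by observation (c) of Section~\ref{X}, so an unconditional bound is needed. If $Y\notin\{H_a,x_eH_b\}$, then since $1\in H_a$ and $x_e\in x_eH_b$ we have $proj_Y\{1,x_e\}\subseteq proj_Y(H_a)\cup proj_Y(x_eH_b)$, hence $\widetilde{diam}(proj_Y\{1,x_e\})\le d^{\pi}_Y(H_a,x_eH_b)$; since $d_P(H_a,x_eH_b)=1$ by observation (a) of Section~\ref{X}, these cosets are adjacent in $P_J(\mathbb Y)$, so $d_Y(H_a,x_eH_b)\le J$, and inequality (1) gives $d^{\pi}_Y(H_a,x_eH_b)\le J+2\xi$. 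If instead $Y=H_a$, then $proj_{H_a}(1)=\{1\}$, and the defining equality $\dzh(1,x_e)=\dzh(1,H_akH_b)$ forces $1\in proj_{H_a}(x_e)$: a strictly closer $z\in H_a$ would give $z^{-1}x_e\in H_akH_b$ with $\dzh(1,z^{-1}x_e)<\dzh(1,H_akH_b)$, absurd. So $proj_{H_a}\{1,x_e\}=proj_{H_a}(x_e)$, which has $\widehat{diam}\le 3C$ by Lemma \ref{oneptbdd}; the case $Y=x_eH_b$ is symmetric, using that every $w\in x_eH_b$ lies in $H_akH_b$. Thus $\widetilde{diam}(proj_Y\{1,x_e\})\le\max\{3C,\,J+2\xi\}$ for every $Y$.

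Putting these together, $\alpha:=\max\{10C,\,J+2\xi\}$ works: if $\widetilde{diam}(proj_Y\{1,x\})>\alpha$ then $x$ cannot be an $X$-letter, and in the $\mathcal H$-letter case we cannot have $Y\ne H_j$, so $x\in H_j$ and $Y=H_j$. I expect the only delicate point to be the combinatorial bookkeeping in the $\mathcal H$-letter case — checking that $h_1$ and $f$ are genuine components of the quadrilateral (which holds because the geodesic sides cannot terminate in an $H_i$-edge into $Y$) and that connectedness of $h_1$ to $f$ genuinely pins down $Y$; this runs parallel to Lemmas \ref{bdd} and \ref{a3}. The $X$-letter case should be essentially formal, resting only on the axioms of Definition \ref{pc}, inequality (1), and the minimality built into the choice of $x_e$.
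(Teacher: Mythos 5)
Your proof is correct and follows essentially the same route as the paper: an unconditional bound for $X$-letters, split into the case $Y\notin\{H_a,x_eH_b\}$ (adjacency of $H_a$ and $x_eH_b$ in $P_J(\mathbb{Y})$ plus inequality (1), giving $J+2\xi$) and the cases $Y=H_a$, $Y=x_eH_b$ (minimality of $x_e$ in its double coset plus Lemma \ref{oneptbdd}), followed by the observation that an $\mathcal{H}$-letter $x\in H_j$ with $Y\neq H_j$ has projection of bounded diameter. The differences are cosmetic only: the paper treats $Y=x_eH_b$ by translating by $x^{-1}$, disposes of the $\mathcal{H}$-letter case in one line by applying Lemma \ref{bdd} to $A=H_j$ (giving $4C$ instead of your quadrilateral bound $10C$), and uses $\alpha=\max\{J+2\xi,6C\}$ rather than your $\max\{10C,J+2\xi\}$, which is immaterial since only the existence of some constant is needed.
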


\begin{proof} We prove the result for $$\alpha = max \{ J+2\xi, 6C \}.$$ Suppose that $\widetilde{diam}(proj_Y \{1,x\}) > \alpha$ and $x \in X$ has type $(k, l)$, i.e., there exists an edge connecting $H_k$ and $gH_l$ in $P_J(\mathbb{Y})$, where $g \in K$. We consider three possible cases and arrive at a contradiction in each case.
\begin{itemize}
\item[Case 1:] $H_k \neq Y \neq xH_l$. Then $$\widetilde{diam}(proj_Y \{1,x\}) \leq d^{\pi} _Y(H_k, xH_l) \leq d_Y(H_k, xH_l) + 2\xi \leq J + 2\xi \leq \alpha,$$  using (1) and the fact that $H_k$ and $xH_l$ are connected by an edge in $P_J(\mathbb{Y})$, which is a contradiction.

\item[Case 2:] $H_k =Y$. Since $x \notin H_k$, let $y \in proj_Y(x)$, i.e., $\dzh(x, y) = \dzh(x, H_k) = \dzh(x, Y)$.
\begin{figure}[H]
\centering
\def\svgscale{0.25}
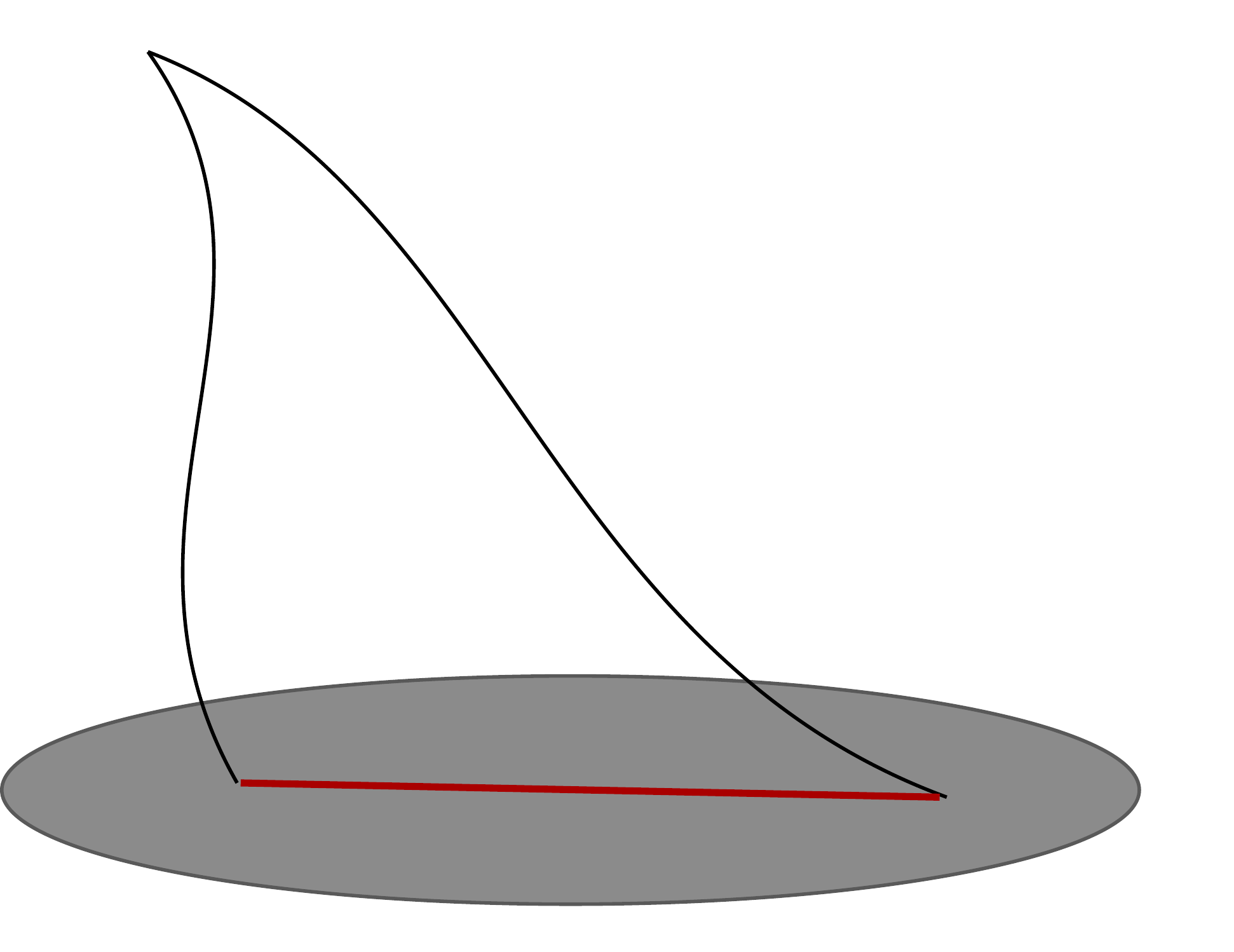
\caption{Case 2}
\label{case2}
\end{figure}

By Lemma \ref{oneptbdd}, if $\widehat{d}_k(1, y) \leq 3C$, then $$\widehat{diam}(proj_Y \{1,x\}) \leq \widehat{diam}(proj_Y(1)) + \widehat{diam}(proj_Y (x)) + \widehat{d}_k( proj_Y(1), proj_Y(x))$$  $$\leq 0 + 3C + \widehat{d}_k(1, y) \leq 6C \leq \alpha.$$ Then by (2), we have $$\widetilde{diam}(proj_Y \{1,x\})\leq \alpha,$$ which is a contradiction. Thus $\widehat{d}_k(1, y) > 3C$. This implies that $ 1 \notin proj_Y(x)$ (see Fig.\ref{case2}). By definition of the nearest point projection, $\dzh(1, x) > \dzh(y, x),$ which implies $\dzh(1, x) > \dzh( 1, y^{-1}x)$. Since $y^{-1}x \in H_k g H_l$, we obtain $\dzh(1, x) > \dzh(1, H_k g H_l)$, which is a contradiction to the choice of $x$.

\item[Case 3:] $Y = xH_l, H_k \neq Y$. This case reduces to Case 2, since we can translate everything by $x^{-1}$.
\end{itemize}
Thus we must have $x \in H_j$ for some $j$. Suppose that $H_j \neq Y$. But then $ \widetilde{diam}(proj_Y \{1,x\}) \leq \widetilde{diam}(proj_Y(H_j)) \leq 4C \leq \alpha$, by Lemma \ref{bdd}; which is a contradiction. \end{proof}

\begin{lem}[cf. Lemma 4.45 in \cite{DGO}]\label{i=j} If $H_i = f H_j$, then $H_i =H_j$ and $f \in H_i$. Consequently, if $g H_i = f H_j$, then $H_i = H_j$ and $g^{-1}f \in H_i$. \end{lem}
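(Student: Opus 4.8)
The plan is to derive this from elementary coset arithmetic; no hyperbolicity, acylindricity, or any of the projection-complex machinery built above is needed. The only fact I would use is that each $H_j$ is a subgroup, hence contains $1$ and is closed under inversion. In that sense this lemma is pure bookkeeping that guarantees the vertices $kH_i$ of $\mathbb{Y}$ behave like honest cosets.

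First I would handle the case $H_i = fH_j$. Since $1 \in H_i$ and $H_i = fH_j$, there is $h \in H_j$ with $fh = 1$, so $f = h^{-1} \in H_j$. Consequently $fH_j = H_j$, and combining this with the hypothesis gives $H_i = H_j$ as subsets of $G$; moreover $f = h^{-1} \in H_j = H_i$. This proves the first assertion.

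For the ``consequently'' part, suppose $gH_i = fH_j$. Left-multiplying by $g^{-1}$ gives $H_i = (g^{-1}f)H_j$, which is exactly the situation already treated, with $g^{-1}f$ playing the role of $f$. Hence $H_i = H_j$ and $g^{-1}f \in H_i$, as claimed. The only point worth flagging is conceptual rather than technical: one must notice that this statement, despite sitting in the middle of a geometric argument, is really a triviality about left cosets, so there is no genuine obstacle and no computation to grind through.
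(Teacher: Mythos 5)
Your proof is correct and matches the paper's own argument: both extract $f \in H_j$ from $1 \in H_i = fH_j$ and conclude $H_i = H_j$, with the second assertion following by left-multiplying by $g^{-1}$. Nothing further is needed.
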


\begin{proof} If $H_i = f H_j$, then $1 = fk$ for some $k \in H_j$. Then $ f= k^{-1} \in H_j$, which implies $H_i =H_j$.\end{proof}

\begin{lem}[cf. Theorem 4.42 in \cite{DGO}] For all $i=1,2,...,n$ and any $h \in H_i$, we have $$\alpha \widehat{d^{X}_{i}}(1, h) \geq \widetilde{d_i}(1,h),$$ where $\alpha$ is the constant from Lemma \ref{alpha}. Thus $\widehat{d_{i}^{X}}$ is proper. \end{lem}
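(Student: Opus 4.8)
The plan is to follow the scheme of Theorem 4.42 in \cite{DGO}: bound $\widetilde{d_i}(1,h)$ by walking along a shortest $i$-admissible path in $\Gamma(K, X\sqcup \mathcal H)$ and charging a uniform amount (governed by $\alpha$) to each edge, then deduce properness of $\widehat{d_i^X}$ from properness of $\widetilde{d_i}$ (Theorem \ref{tilde}). If $\widehat{d_i^X}(1,h)=\infty$ there is nothing to prove, so I would fix $\ell=\widehat{d_i^X}(1,h)<\infty$ and a shortest $i$-admissible path $p$ in $\Gamma(K,X\sqcup\mathcal H)$ from $1$ to $h$, with consecutive vertices $1=v_0,v_1,\dots,v_\ell=h$ (all in $K$) and edge labels $x_k\in X\sqcup\mathcal H$, so that $v_{k+1}=v_k x_k$. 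All projections are taken in $\Gamma(G,Z\sqcup\mathcal H)$, and $\widetilde{diam}$ denotes $\widetilde{d_i}$-diameter, which is invariant under the left $K$-action because left translations are isometries of $\Gamma(G,Z\sqcup\mathcal H)$.

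The heart of the argument is the per-edge estimate: $\widetilde{diam}\!\left(proj_{H_i}\{v_k,v_{k+1}\}\right)\le\alpha$ for every $k$. Here I would use equivariance of nearest point projection under the isometry ``left translation by $v_k$'', which carries the coset $v_k^{-1}H_i\in\mathbb Y$ to $H_i$ and $\{1,x_k\}$ to $\{v_k,v_{k+1}\}$; hence $proj_{H_i}\{v_k,v_{k+1}\}=v_k\cdot proj_{v_k^{-1}H_i}\{1,x_k\}$ and the two diameters coincide. If this diameter were $>\alpha$, Lemma \ref{alpha} applied with $Y=v_k^{-1}H_i$ and $x=x_k$ would give $x_k\in H_j$ and $v_k^{-1}H_i=H_j$ for some $j$; the latter forces $1\in v_k^{-1}H_i$, i.e.\ $v_k\in H_i$, and then $H_i=H_j$ by Lemma \ref{i=j}. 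Consequently $v_k$ and $v_{k+1}=v_k x_k$ both lie in $H_i$ and the edge of $p$ between them is an $H_i$-edge, i.e.\ an edge of $\Gamma(H_i,H_i)$, contradicting $i$-admissibility of $p$. \textbf{This is the step I expect to be the main obstacle}: it is where the disjointness of $\mathcal H=\bigsqcup_i H_i$, the constant $\alpha$ built in Lemma \ref{alpha}, and the coset rigidity of Lemma \ref{i=j} have to fit together, and one has to check that ``$i$-admissible'' excludes precisely the edges manufactured by the contradiction.

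To finish, note that since $1,h\in H_i$ one has $proj_{H_i}(1)=\{1\}$ and $proj_{H_i}(h)=\{h\}$, so $\widetilde{d_i}(1,h)=\widetilde{diam}\!\left(proj_{H_i}(v_0)\cup proj_{H_i}(v_\ell)\right)$. Applying the triangle inequality to the chain $proj_{H_i}(v_0),\dots,proj_{H_i}(v_\ell)$ of nonempty subsets of $H_i$, exactly as in the verification of condition (A2) in Lemma \ref{a1a2}, and then the per-edge estimate, yields $\widetilde{d_i}(1,h)\le\sum_{k=0}^{\ell-1}\widetilde{diam}\!\left(proj_{H_i}\{v_k,v_{k+1}\}\right)\le\alpha\ell=\alpha\,\widehat{d_i^X}(1,h)$, which is the claimed inequality. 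For properness, the ball $\{h\in H_i:\widehat{d_i^X}(1,h)\le N\}$ is then contained in $\{h\in H_i:\widetilde{d_i}(1,h)\le\alpha N\}$, which is finite by Theorem \ref{tilde}; since $\widehat{d_i^X}$ is invariant under left translation by $H_i$, every finite-radius ball in $(H_i,\widehat{d_i^X})$ is finite, so $\widehat{d_i^X}$ is proper.
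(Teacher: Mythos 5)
Your proposal is correct and follows essentially the same route as the paper: reduce to the per-edge bound $\widetilde{diam}(proj_{H_i}\{v_k,v_{k+1}\})\le\alpha$ via left-translation equivariance, invoke Lemma \ref{alpha} and Lemma \ref{i=j} to rule out the bad case, and sum along the admissible path, then deduce properness from Theorem \ref{tilde}. The only cosmetic difference is that the paper packages the contradiction as non-isolation of the auxiliary $H_i$-edge $e$ in the cycle $ep$, whereas you contradict admissibility of $p$ directly; these are the same argument.
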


\begin{proof} Let $h \in H_i$ such that $\widehat{d^{X}_{i}}(1, h) = r$. Let $e$ denote the $H_i$-edge in the Cayley graph $\Gamma(K, X \sqcup \mathcal{H})$ connecting $h$ to $1$, labeled by $h^{-1}$. Let $p$ be an admissible (see Definition \ref{rm}) geodesic path of length $r$ in $\Gamma(K, X \sqcup \mathcal{H})$ connecting $1$ and $h$. Then $ep$ forms a cycle. Since $p$ is admissible, $e$ is isolated in this cycle.

Let $Lab(p) = x_1 x_2 ...x_r$ for some $x_1, x_2,...,x_r\in X \sqcup \mathcal{H}$. Let $$v_0 =1, v_1 =x_1, v_2 =x_1 x_2 ,..., v_r = x_1 x_2 ...x_r = h.$$ Since these are also elements of $G$, for all $k = 1,2,...,r$ we have $$\widetilde{diam}(proj_{H_i} \{v_{k-1}, v_k\}) = \widetilde{diam}(proj_{H_i} \{x_1 x_2...x_{k-1} , x_1 x_2...x_{k-1} x_k\})$$  $$= \widetilde{diam}(proj_Y\{1, x_k\},$$ where $Y =(x_1 x_2...x_{k-1})^{-1}H_i$.
\begin{figure}
\centering
\def\svgscale{0.4}
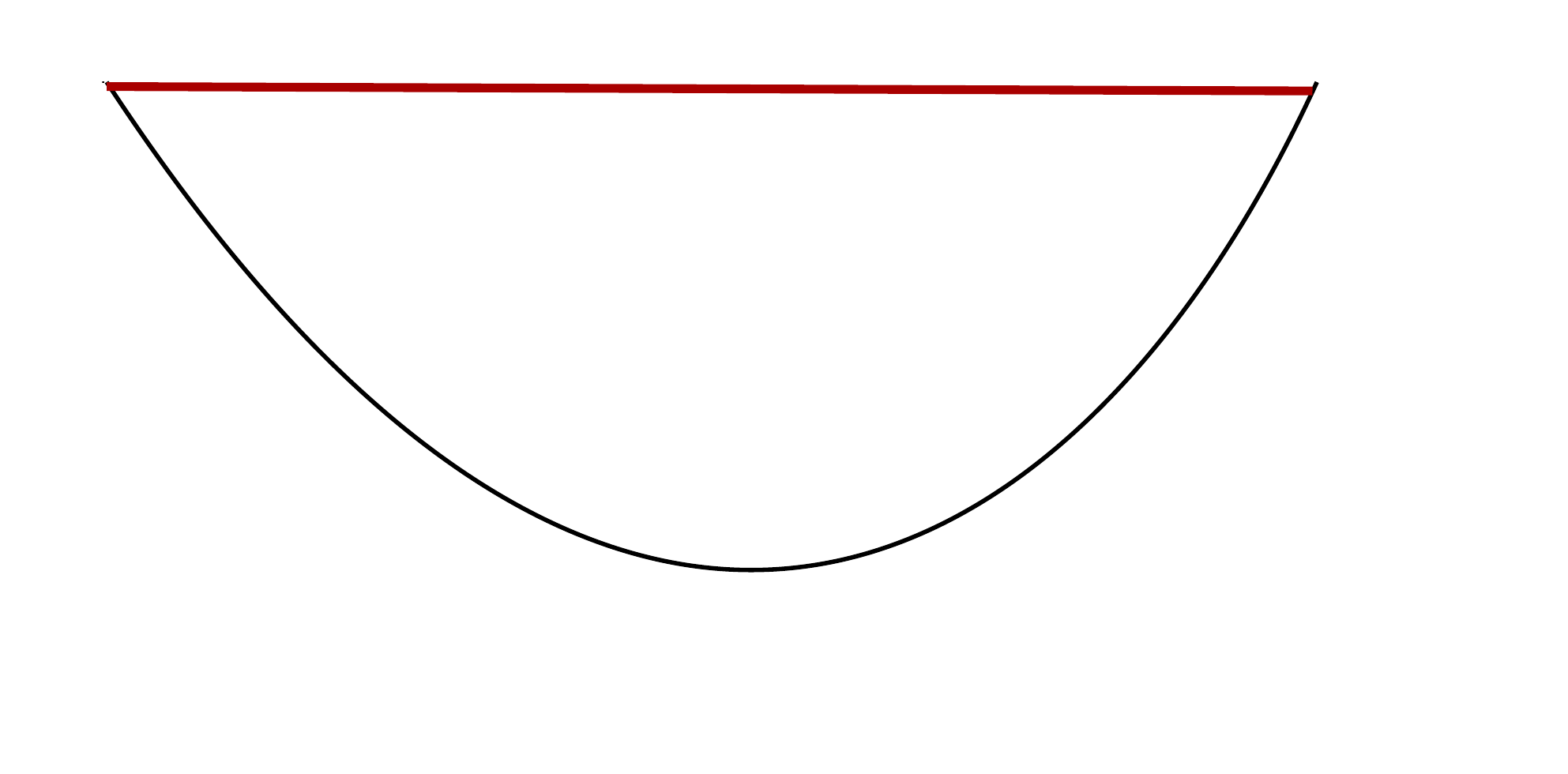
\caption{The cycle $ep$}
\label{cycle}
\end{figure}

If $\widetilde{diam}(proj_Y\{ 1, x_k\}) > \alpha$ for some $k$, then by Lemma \ref{alpha}, $x_k \in H_j$ and $Y =H_j$ for some j. By Lemma \ref{i=j}, $H_i =H_j$ and $x_1 x_2 ...x_{k-1} \in H_j$. But then $e$ is not isolated in the cycle $ep$, which is a contradiction.

Hence $$\widetilde{diam}(proj_{H_i}\{ v_{k-1}, v_k\}) \leq \alpha$$ for all $k = 1, 2,...,r$, which implies $$\widetilde{d_{i}}(1, h) \leq \widetilde{diam}(proj_{H_i}\{v_0, v_r\}) \leq \sum_{j=1}^{r} \widetilde{diam}(proj_{H_i} \{v_{j-1}, v_j\} \leq r\alpha = \alpha \widehat{d^X}_i(1, h).$$
 \end{proof}

\subsection{Proof of Proposition \ref{prop}} \label{rel}

The goal of this section is to alter our relative generating set $X$ from Section \ref{X}, so that we obtain another relative generating set that satisfies all the conditions of Proposition \ref{prop}. To do so, we need to establish a relation between the set $X$ and the set $Z$. We will need the following obvious lemma.

\begin{lem} \label{qt} Let X and Y be generating sets of G, and $sup_{x \in X} |x|_Y < \infty$ and $sup_{y \in Y} |y|_X < \infty$. Then $\Gamma(G, X)$ is quasi-isometric to $\Gamma(G, Y)$. In particular $\Gamma(G, X)$ is a quasi-tree if and only if $\Gamma(G, Y)$ is a quasi-tree.
\end{lem}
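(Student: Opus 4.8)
The plan is to show that the identity map of $G$, regarded as a map between the vertex sets of $\Gamma(G,X)$ and $\Gamma(G,Y)$, is a bi-Lipschitz bijection, to upgrade this to a quasi-isometry of the two Cayley graphs, and then to invoke the already-recorded fact that being a quasi-tree is an invariant of the quasi-isometry class.

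First I would set $M=\sup_{x\in X}|x|_Y$ and $N=\sup_{y\in Y}|y|_X$, both finite by hypothesis. Given $g\in G$, pick a geodesic word $x_1\cdots x_k$ in $X$ representing $g$ with $k=|g|_X$, and replace each letter $x_i$ by a word of length at most $M$ in $Y$; this exhibits $g$ as a word of length at most $Mk$ in $Y$, so $|g|_Y\le M|g|_X$. Symmetrically $|g|_X\le N|g|_Y$. Passing to word metrics (which are exactly the restrictions to $G\subseteq\Gamma(G,X)$, resp. $G\subseteq\Gamma(G,Y)$, of the combinatorial metrics), we get for all $g,h\in G$
$$
d_Y(g,h)\le M\,d_X(g,h),\qquad d_X(g,h)\le N\,d_Y(g,h).
$$
Hence the identity $G\to G$ is a $(\max\{M,N\}+1,1)$-quasi-isometric embedding in either direction and is onto, i.e.\ a quasi-isometry between the two vertex sets.

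Next I would observe that every point of $\Gamma(G,X)$ lies within distance $\tfrac12$ of a vertex, and likewise in $\Gamma(G,Y)$, so the vertex-set inclusions $G\hookrightarrow\Gamma(G,X)$ and $G\hookrightarrow\Gamma(G,Y)$ are quasi-isometries; composing these with the bi-Lipschitz bijection above gives a quasi-isometry $\Gamma(G,X)\to\Gamma(G,Y)$. The final clause is then immediate: a quasi-tree is by definition a graph quasi-isometric to a tree, and since quasi-isometry is an equivalence relation, $\Gamma(G,X)$ is quasi-isometric to some tree if and only if $\Gamma(G,Y)$ is. There is no genuine obstacle here; the only things to keep straight are the additive constants forced by the paper's convention $\lambda>1$ and the distinction between distances in a Cayley graph and in its vertex set, which is precisely why the statement is flagged as ``obvious.''
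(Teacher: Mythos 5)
Your argument is correct and is exactly the standard one the paper has in mind when it calls the lemma ``obvious'' (the paper in fact gives no proof): bound the word lengths in each direction by $M$ and $N$, conclude the identity on vertices is bi-Lipschitz, extend to the graphs since every point lies within $\tfrac12$ of a vertex, and use that quasi-isometry is an equivalence relation. Nothing is missing.
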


\begin{rem}\label{change} The above lemma implies that if we change a generating set by adding finitely many elements, then the property that the Cayley graph is a quasi-tree still holds. \end{rem}

We also need to note that from (1) in Definition \ref{pc}, it easily follows that $$d_Y(A, B) \leq d^{\pi}_Y(A,B) +2\xi. \hspace{30pt} (10)$$

\begin{lem}\label{notrep} For a large enough $J$, the set $X$ constructed in Section \ref{X} satisfies the following property$\colon$ If $z \in Z \cap K$ does not represent any element of $H_i$ for all $i=1,2,...,n$, then $z \in X$. \end{lem}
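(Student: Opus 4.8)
The plan is to produce $z$ as the chosen element $x_e$ attached to a suitable edge $e$ of the projection complex. Fix such a $z$ and consider the two vertices $H_1,zH_1\in\mathbb Y$. Since $z$ represents no element of any $H_i$ we have $z\neq 1$ (the identity lies in $H_1$) and $z\notin H_1$; hence $zH_1\neq H_1$, and because there is a single $Z$-edge joining $1$ and $z$ in $\Gamma(G,Z\sqcup\mathcal H)$ we get $\dzh(1,z)=1$. Moreover $1\notin H_1zH_1$ (otherwise $z\in H_1$), so $\dzh(1,H_1zH_1)\geq 1$, while $z\in H_1zH_1$ forces $\dzh(1,H_1zH_1)=1=\dzh(1,z)$. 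Thus $z$ is a legitimate value for $x_e$ on the edge $e$ of $star(H_1)$ joining $H_1$ to $zH_1$ — provided that edge exists — and, by left-invariance of $\dzh$, the inverse $z^{-1}$ is legitimate on the dual edge joining $H_1$ to $z^{-1}H_1$; making these choices in the construction of Section \ref{X} puts $z\in X$. So the whole statement reduces to showing that, for all sufficiently large $J$ (with the threshold independent of $z$), $H_1$ and $zH_1$ are adjacent in $P_J(\mathbb Y)$.

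To prove adjacency I would bound $d^{\pi}_Y(H_1,zH_1)$ uniformly over $Y\in\mathbb Y\setminus\{H_1,zH_1\}$ and then invoke the edge criterion together with inequality (1). Write $Y=gH_\ell$. By Lemma \ref{bdd}, $\widetilde{diam}(proj_Y(H_1))\leq 4C$ and $\widetilde{diam}(proj_Y(zH_1))\leq 4C$. For the cross term, pick $p\in proj_Y(1)\subseteq proj_Y(H_1)$ and $q\in proj_Y(z)\subseteq proj_Y(zH_1)$, and form the geodesic quadrilateral $1\to z\to q\to p\to 1$ whose sides are the $Z$-edge $1\to z$, a geodesic from $z$ to $q$, the single $\mathcal H$-edge $e_Y$ inside the coset $Y$ from $q$ to $p$, and a geodesic from $p$ to $1$. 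As in the proofs of Lemmas \ref{oneptbdd} and \ref{bdd}, $e_Y$ cannot be connected to a component of either geodesic side (that would contradict $p$, resp.\ $q$, being a nearest-point projection to $Y$), and it cannot be connected to the $Z$-edge, which is not an $\mathcal H$-edge; hence $e_Y$ is isolated, and Lemma \ref{isolated} gives $\widehat{d}_\ell(p,q)\leq 4C$, so $\widetilde{d}_\ell(p,q)\leq 4C$ by (2). Combining these three bounds, $d^{\pi}_Y(H_1,zH_1)=\widetilde{diam}(proj_Y(H_1)\cup proj_Y(zH_1))\leq 12C$, whence $d_Y(H_1,zH_1)\leq 12C+2\xi$ by (1). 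Choosing $J\geq 12C+2\xi$ (and also as large as Proposition \ref{K} and Lemma \ref{alpha} require) forces the edge $e$ between $H_1$ and $zH_1$, and we set $x_e=z$.

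Two minor points remain, neither of them an obstacle. The degenerate configurations are harmless: if $Y\in\{H_2,\dots,H_n\}$ then $1\in Y$ and the side $p\to 1$ is a point, and if $z\in Y$ then the side $z\to q$ is a point; in either case the isolated-component argument for $e_Y$ still applies and the resulting bound only improves. And the only constraint linking the edge-choices is the dual-edge symmetry of Section \ref{X}, which is respected here because $\dzh(1,z^{-1})=\dzh(1,z)=1$ makes $z^{-1}$ a legitimate choice on the dual edge. I expect the real content to be exactly the uniform bound on $d^{\pi}_Y(H_1,zH_1)$ in the second paragraph — in particular the observation that the ``large projection'' phenomenon quantified in Lemma \ref{alpha} cannot arise, precisely because $z$ is a single $Z$-letter and therefore $\dzh$-short — after which the conclusion is immediate.
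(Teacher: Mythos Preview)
Your proof is correct and follows essentially the same approach as the paper: show that $z$ attains $\dzh(1,\cdot)$-distance to the double coset and then bound $d^\pi_Y$ uniformly via an isolated-component argument to force the edge in $P_J(\mathbb Y)$. The only difference is that you project the points $1$ and $z$ directly and work with a geodesic quadrilateral, whereas the paper projects the cosets $H_i$ and $zH_i$ (via their closest points to $Y$) and works with a hexagon; this yields a slightly sharper constant ($12C+2\xi$ versus the paper's $14C+2\xi$) but is otherwise the same argument.
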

\begin{proof} Recall that $\dzh$ denotes the combinatorial metric on $\Gamma(G, Z \sqcup \mathcal{H})$. Let $z \in Z \cap K$ be as in the statement of the lemma. Then $z \in H_i z H_i$ for all $i$ and $1 \notin H_i z H_i$. Thus $$\dzh(1, H_i z H_i) \geq 1 = \dzh(1, z) \geq \dzh(1, H_i z H_i),$$ which implies \begin{center}$\dzh(1, H_i z H_i) = \dzh(1, z)$ for all $i$.\end{center}

In order to prove $z \in X$, we must show that $H_i$ and $z H_i$ are connected by an edge in $P_J(\mathbb{Y})$. By Definition \ref{pc}, this is true if \begin{center}$d_Y(H_i, zH_i) \leq J$ for all $Y \neq H_i, zH_i $.\end{center} In view of (10), we will estimate $d^{\pi}_Y(H_i, zH_i)$.
\begin{figure}
\centering
\def\svgscale{0.4}
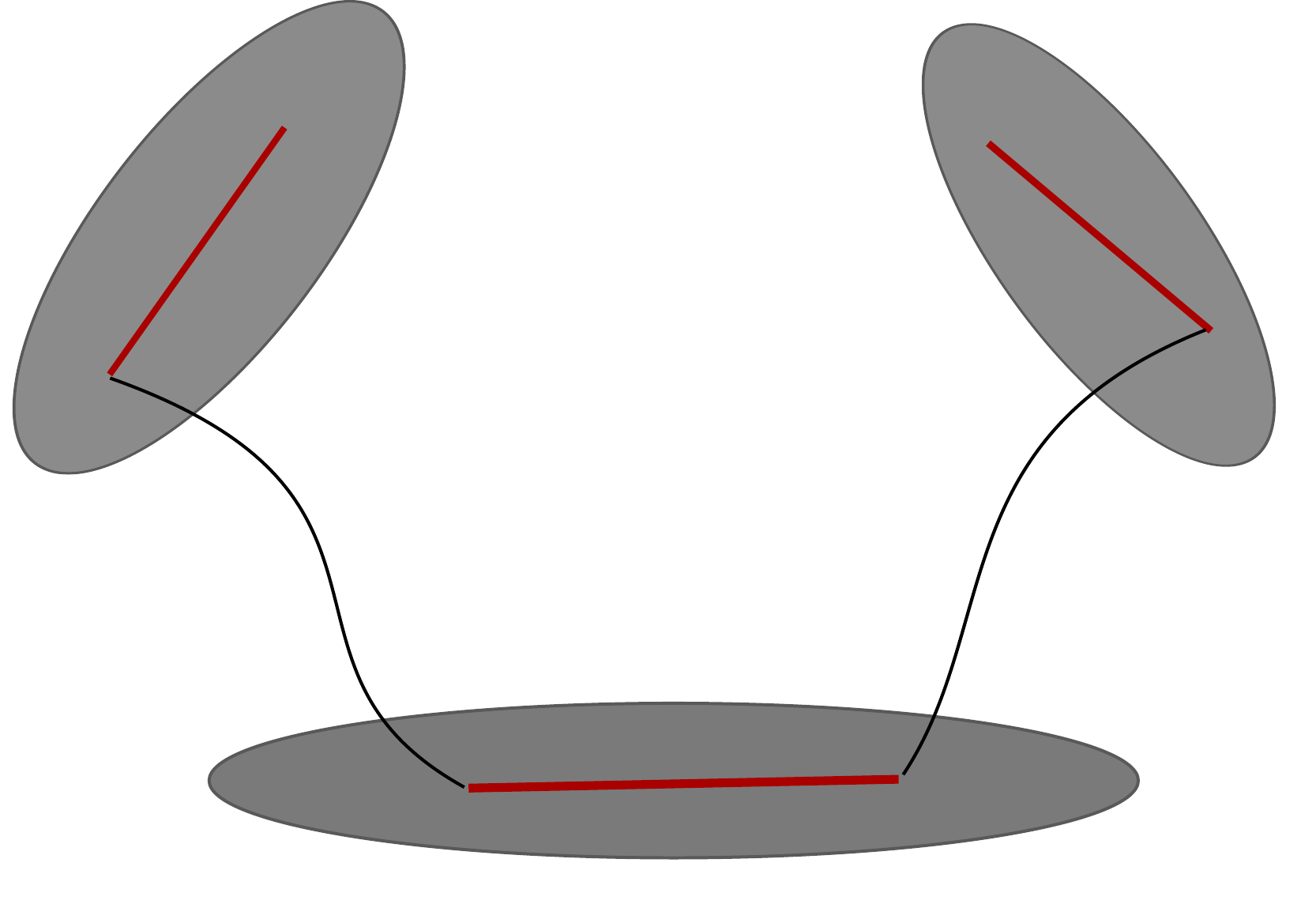
\caption{ Dealing with elements of $Z \cap K$ that represent elements of $\mathcal{H}$}
\label{incl}
\end{figure}

Let $\dzh(h, x) = \dzh(H_i, Y)$ and $\dzh(f, y) = \dzh(zH_i, Y)$ for some $h \in H_i, f \in zH_i$ and for some $x,y \in Y= gH_j$. Let $p$ be a geodesic connecting $h$ and $x$; and $q$ be a geodesic connecting $y$ and $f$. Let $h_2$ denote the edge connecting $x$ and $y$, labelled by an element of $H_j$. Similarly, let $s, t$ denote the edges connecting $h, 1$ and $z, f$ respectively, that are labelled by elements of $H_i$. Let $e$ denote the edge connecting $1$ and $z$, labelled by $z$. Consider the geodesic hexagon $W$ with sides $p, h_2, q, t, e, s$ (see Fig.\ref{incl}).

Arguing as in Lemma \ref{a3}), we can show that $h_2$ cannot be connected to $q$ , $p$, $s$ or $t$. Since  $z$ does not represent any element of $H_i$ for all $i$, $h_2$ cannot be connected to $e$. Thus, $h_2$ is isolated in $W$. By Lemma \ref{isolated}, $\widehat{d_j}(x, y) \leq 6C$. By Lemma \ref{bdd}, $$d_Y(H_i, zH_i) \leq d^{\pi}_Y(H_i, zH_i) \leq 14C + 2\xi.$$ So we conclude that by taking the constant $J$ to be sufficiently large so that Proposition \ref{K} holds and $J$ exceeds $14C +2\xi$, we can ensure that $z \in X$ and the arguments of the previous section still hold. \end{proof}

\begin{lem}\label{rep} There are only finitely many elements of $Z \cap K$ that can represent an element of $H_i$ for some $i \in \{1,2,...,n\}$. \end{lem}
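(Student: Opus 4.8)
The plan is to reduce the statement to the properness of the relative metrics $\widehat{d_i}$ on the $H_i$. All geometry will be carried out in the Cayley graph $\Gamma(G, Z \sqcup \mathcal{H})$, with metric $\dzh$ and with $C$ the constant from Lemma~\ref{isolated}. The key claim I would establish is the following: every element of $Z \cap K$ that represents an element $h$ of some $H_i$ satisfies $\widehat{d_i}(1,h) \le 2C$. Granting this, the lemma is immediate: condition~(c) in the definition of $\{H_1,\dots,H_n\} \hookrightarrow_h (G,Z)$ says each $(H_i, \widehat{d_i})$ is proper, so the ball $\{h \in H_i \mid \widehat{d_i}(1,h) \le 2C\}$ is finite for each $i$, and there are only $n$ indices; hence the set in question, being contained in the union of these finitely many finite balls, is finite.

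To prove the claim I would argue as follows. Fix $z \in Z \cap K$ with $z = h$ in $G$ for some $h \in H_i$ (one may assume $z \ne 1$, as that accounts for at most one element). In $\Gamma(G, Z \sqcup \mathcal{H})$ there is an edge $e_Z$ from $1$ to $z$ labelled by the letter $z \in Z$, and an edge $e_h$ from $1$ to $h = z$ labelled by the letter $h$ of the alphabet $H_i \subset \mathcal{H}$. These two edges share both endpoints, and each is a geodesic (being a single edge, with $z \ne 1$), so together they bound a bigon, i.e.\ a $2$-gon with geodesic sides. Now $e_h$ is an $H_i$-component of this bigon, whereas $e_Z$ is \emph{not} a component at all: its label is a letter of $Z$, which by the disjointness of the union $Z \sqcup \mathcal{H}$ (see Remark~\ref{bigons}) is never a letter of any $H_\lambda$. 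Hence $e_h$ is the only component of the bigon and is therefore vacuously isolated, so Lemma~\ref{isolated} applied with $n = 2$ gives $\widehat{d_i}(1,h) = \widehat{d_i}((e_h)_-, (e_h)_+) \le 2C$, as claimed.

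I do not anticipate a genuine obstacle here; the argument is a short application of Lemma~\ref{isolated} followed by properness. The one point requiring care is the isolation step: it is essential that a letter of $Z$ can never simultaneously be a letter of some $H_\lambda$, so that the $z$-edge does not count as an $H_i$-component to which $e_h$ might be connected. This is precisely the phenomenon for which the union $Z \sqcup \mathcal{H}$ is kept formally disjoint in Definition~\ref{rm}.
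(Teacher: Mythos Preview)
Your proof is correct and follows essentially the same approach as the paper: both set up the bigon between the $Z$-edge and the $H_i$-edge and then invoke properness of the relative metric. The only difference is that the paper observes directly that the $z$-labelled edge is itself an $i$-admissible path, giving the sharper bound $\widehat{d_i}(1,h) \le 1$, whereas you route through Lemma~\ref{isolated} to obtain $\widehat{d_i}(1,h) \le 2C$; either bound suffices.
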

\begin{proof} Let $z \in Z \cap K$ represent an element of $H_i$ for some $i=1,2,...,n$. Then in the Cayley graph $\Gamma(G, Z \sqcup \mathcal{H})$, we have a bigon between the elements $1$ and $h$, where one edge is labelled by $z$, and the other edge is labelled by an element of $H_i$, say $h_1$ (see Rem. \ref{bigons} and Fig.\ref{bigon}).
\begin{figure}
\centering
\def\svgscale{0.35}
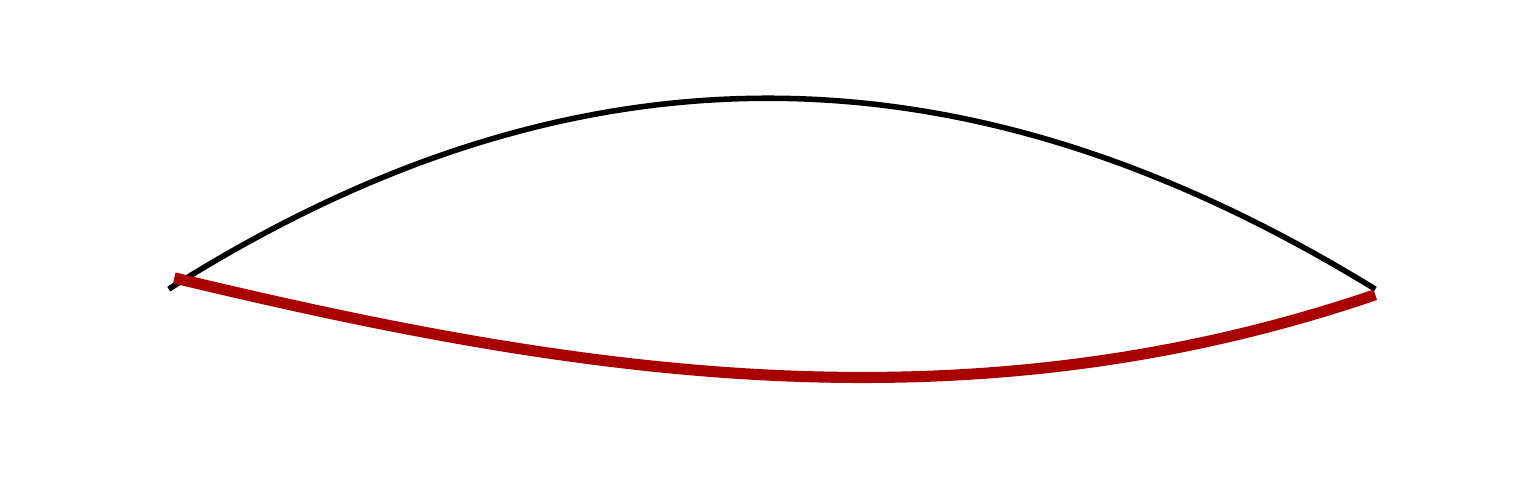
\caption{Bigons in the Cayley graph}
\label{bigon}
\end{figure}

This implies that $\widehat{d_i}(1,z) \leq 1$, so $\widetilde{d_i}(1,z) \leq 1$. But then $ z \in \widetilde{B_i}(1,1)$, i.e., the ball of radius 1 in the subgroup $H_i$  in the relative metric, centered at the identity. But this is a finite ball. Take $$\rho = \left| \bigcup_{i=1}^{n}  \widetilde{B_i}(1, 1)\right|.$$ Then $z$ has at most $\rho$ choices, which is finite. \end{proof}

By Lemma \ref{rep} and by selecting the constant $J$ as specified in Lemma \ref{notrep}, we conclude that the set $X$ from Section \ref{X} does not contain at most finitely many elements of $Z\cap K$. By adding these finitely many remaining elements of $Z \cap K$ to $X$, we obtain a new relative generating set $X'$ such that $|X' \Delta X| < \infty$. By Proposition \ref{symmdiff}, $\{H_1, H_2,...,H_n\} \hookrightarrow_h (K, X')$ and $ Z \cap K \subset X'$. By Remark \ref{change}, $\Gamma(K, X' \sqcup \mathcal{H})$ is also a quasi-tree. Thus $X'$ is the required set in the statement of Proposition \ref{prop}, which completes the proof. 

\subsection{Applications of Theorem \ref{3 condns}} \label{secappln}
In order to prove Theorem \ref{main}, we first need to recall the following definitions. 

\begin{defn}[Loxodromic element] Let $G$ be a group acting on a hyperbolic space $S$. An element $g\in G$ is called \textit{loxodromic} if the map $\mathbb{Z} \rightarrow S$ defined by $ n \rightarrow g^ns$ is a quasi-isometric embedding for some (equivalently, any) $s \in S$. \end{defn}

\begin{defn}[Elementary subgroup, Lemma 6.5 in \cite{DGO}] Let $G$ be a group acting acylindrically on a hyperbolic space $S$, $g \in G$ a loxodromic element. Then $g$ is contained in a unique maximal \textit{elementary subgroup $E(g)$} of $G$ given by
$$E(g) = \{h \in G \left| d_{Hau}(l, h(l)) < \infty \right.\},$$ where $l$ is a quasi-geodesic axis of $g$ in $S$. \end{defn}

\begin{cor}\label{finally} A  group $G$ is acylindrically hyperbolic if and only if G has an acylindrical and non-elementary action on a quasi-tree. \end{cor}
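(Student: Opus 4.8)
The plan is to read the corollary off from Theorem \ref{3 condns} and the equivalences in Theorem \ref{ah}; the forward implication will in fact also be a proof of Theorem \ref{main}. The ``if'' direction I would dispose of first, since it is immediate: a quasi-tree is by definition quasi-isometric to a simplicial tree, a tree is $0$-hyperbolic, and hyperbolicity of geodesic spaces is a quasi-isometry invariant (as already used in the proof of Lemma \ref{bpr}), so every quasi-tree is a hyperbolic space. Hence a non-elementary acylindrical action of $G$ on a quasi-tree is in particular a non-elementary acylindrical action on a hyperbolic space, which is exactly condition $(AH_2)$ of Theorem \ref{ah}; therefore $G$ is acylindrically hyperbolic.

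For the ``only if'' direction I would argue as follows. Assume $G$ is acylindrically hyperbolic. By $(AH_2)$ it admits a non-elementary acylindrical action on a hyperbolic space, and by \cite{DGO} such an action possesses a loxodromic element $g$ for which the maximal elementary subgroup $E(g)$ is hyperbolically embedded in $G$, say $E(g) \hookrightarrow_h (G,Z)$; moreover $E(g)$ is virtually cyclic, hence countable. I then apply Theorem \ref{3 condns} with $n=1$, $H_1 = E(g)$, $K = G$ and this relative generating set $Z$, obtaining a subset $Y \subseteq G$ such that $\{E(g)\} \hookrightarrow_h (G,Y)$, such that $\Gamma(G, Y \sqcup E(g))$ is a quasi-tree, and such that the natural action of $G$ on $\Gamma(G, Y \sqcup E(g))$ is acylindrical.

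It then remains to check that this action is non-elementary (and cobounded, which gives the slightly stronger Theorem \ref{main}). Coboundedness is automatic because $G$ acts transitively on the vertex set of its own Cayley graph. For non-elementarity I would note that $\Gamma(G, Y \sqcup E(g))$ is unbounded, since $G$ is infinite, so the acylindrical action is not elliptic; by the classification of acylindrical actions on hyperbolic spaces \cite{ah}, either $G$ is virtually cyclic or the action is of general type, and since acylindrically hyperbolic groups contain non-abelian free subgroups and so are not virtually cyclic, the action must be of general type, i.e. non-elementary. (Equivalently, non-elementarity follows from $E(g)$ being a proper infinite subgroup of $G$ hyperbolically embedded with respect to $Y$, as in the proof of $(AH_3) \Rightarrow (AH_1)$ in \cite{ah}.) This yields a non-elementary cobounded acylindrical action of $G$ on a quasi-tree and completes the proof.

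I do not anticipate a real obstacle, since the technical heart --- via the Bestvina--Bromberg--Fujiwara projection complex --- is already contained in Theorem \ref{3 condns}. The two points that deserve a moment of care are that an acylindrically hyperbolic group contains a countable (indeed virtually cyclic) hyperbolically embedded subgroup, so that Theorem \ref{3 condns} is applicable, and that the resulting Cayley graph action is non-elementary; both are standard consequences of the theory in \cite{DGO} and \cite{ah}. Everything else is a direct invocation of Theorem \ref{3 condns} and Theorem \ref{ah}.
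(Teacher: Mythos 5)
Your proposal is correct and takes essentially the same route as the paper: the ``if'' direction via condition $(AH_2)$ of Theorem \ref{ah}, and the ``only if'' direction by taking a loxodromic $g$, noting $E(g)$ is virtually cyclic (hence countable) and hyperbolically embedded, applying Theorem \ref{3 condns} with $K=G$, and then invoking non-degeneracy of $E(g)$ together with Lemma 5.12 of \cite{ah} for non-elementarity, which is exactly the paper's argument. The one small caveat is that your claim that $\Gamma(G, Y \sqcup E(g))$ is unbounded ``since $G$ is infinite'' is not a valid inference for Cayley graphs over infinite generating sets, but your parenthetical justification (properness and infiniteness of the hyperbolically embedded $E(g)$, as in $(AH_3)\Rightarrow(AH_1)$ in \cite{ah}) is precisely what the paper uses, so the proof stands.
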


\begin{proof}
If $G$ has an acylindrical and non-elementary action on a quasi-tree, by Theorem \ref{ah}, $G$ is acylindrically hyperbolic. Conversely, let $G$ be acylindrically hyperbolic, with an acylindrical non-elementary action on a hyperbolic space $X$. Let $g$ be a loxodromic element for this action. By Lemma 6.5 of \cite{DGO} the elementary subgroup $E(g)$ is virtually cyclic and thus countable. By Theorem 6.8 of \cite{DGO}, $E(g)$ is hyperbolically embedded in $G$. Taking $K = G$ and $E(g)$ to be the hyperbolically embedded subgroup in the statement of Theorem \ref{3 condns} now gives us the result. Since $E(g)$ is non-degenerate, by \cite{ah}, Lemma 5.12, the resulting action of $G$ on the associated Cayley graph $\Gamma(G, X \sqcup E(g))$ is also non-elementary. \end{proof}

The following corollary is an immediate consequence of Theorem \ref{3 condns}.  

\begin{cor} \label{appln} Let $\{ H_1, H_2,..., H_n\}$ be a finite collection of countable subgroups of a group $G$ such that $\{H1, H_2,...,H_n\} \hookrightarrow_h G$. Let $K$ be a subgroup of $G$. If $H_i \leq K$ for all $i =1,2,...,n$, then $\{H_1, H_2,...,H_n\} \hookrightarrow_h K$. \end{cor}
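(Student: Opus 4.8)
The plan is to simply unwind the definition of hyperbolic embeddedness and invoke Theorem \ref{3 condns}. By hypothesis $\{H_1,\dots,H_n\}\hookrightarrow_h G$, which by definition means there exists a relative generating set $Z\subseteq G$ with $\{H_1,\dots,H_n\}\hookrightarrow_h (G,Z)$. Each $H_i$ is countable, and by assumption $H_i\le K$ for all $i$. Thus the hypotheses of Theorem \ref{3 condns} are met with this choice of $Z$ and this subgroup $K$.

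Applying Theorem \ref{3 condns}, we obtain a subset $Y\subseteq K$ satisfying conditions (a)--(d) of that theorem. In particular, condition (a) gives $\{H_1,\dots,H_n\}\hookrightarrow_h (K,Y)$. Since $Y\subseteq K$ is a relative generating set for $K$ with respect to the collection $\{H_1,\dots,H_n\}$, this is precisely the statement that $\{H_1,\dots,H_n\}\hookrightarrow_h K$, by the definition of hyperbolic embeddedness without reference to a prescribed relative generating set. This completes the argument; the extra conditions (b)--(d) of Theorem \ref{3 condns} (the quasi-tree and acylindricity properties, and the containment $Z\cap K\subseteq Y$) are not needed for this particular corollary, though they are of course what makes Theorem \ref{3 condns} substantially stronger.

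The only real work has already been done: the content is entirely in Theorem \ref{3 condns}, and in particular in Proposition \ref{prop} together with the Bestvina--Bromberg--Fujiwara projection complex construction used to prove it (verifying axioms (A1)--(A4) for the function $d^\pi_Y$ of \eqref{3}, and establishing the properness estimate $\alpha\,\widehat{d^X_i}(1,h)\ge \widetilde{d_i}(1,h)$). So there is no genuine obstacle at the level of this corollary; the proof is a one-line deduction from the main theorem. One should perhaps only remark, to reassure the reader, that the countability hypothesis on the $H_i$ is exactly what is required to invoke Theorem \ref{tilde} inside the proof of Theorem \ref{3 condns}, and that this is the reason countability appears in the statement of Corollary \ref{appln} (cf.\ the discussion after Corollary \ref{introv} comparing with Sisto's result, which trades countability for a virtual retract assumption).
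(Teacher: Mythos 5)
Your proposal is correct and matches the paper exactly: the paper presents Corollary \ref{appln} as an immediate consequence of Theorem \ref{3 condns}, obtained precisely by taking the set $Y\subseteq K$ from that theorem and noting that condition (a) already gives $\{H_1,\dots,H_n\}\hookrightarrow_h (K,Y)$, hence $\{H_1,\dots,H_n\}\hookrightarrow_h K$. Your remark about why countability is needed (to invoke Theorem \ref{tilde} inside the proof of the main theorem) is accurate as well.
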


\begin{defn}\label{cc} Let $(M, d)$ be a geodesic metric space, and $\epsilon >0$ a fixed constant. A subset $S \subset M$ is said to be \textit{$\epsilon$-coarsely connected} if there for any two points $x,y$ in $S$, there exist points $x_0 =x, x_1, x_2,...,x_{n-1}, x_n =y$ in $S$ such that  for all $i =0,...,n-1$, $$d(x_i, x_{i+1}) \leq \epsilon.$$ Further we say that $S$ is \textit{coarsely connected} if it is $\epsilon$-coarsely connected for some $\epsilon >0$. \end{defn} 

Recall that we denote the closed $\sigma$ neighborhood of $S$ by $S^{+\sigma}$.
\begin{defn} Let $(M, d)$ be a geodesic metric space, and $\sigma >0$ a fixed constant. A subset $S \subset M$ is said to be \textit{$\sigma$-quasi-convex} if for any two points $x,y$ in $S$, any geodesic connecting $x$ and $y$ is contained in  $S^{+\sigma}$. Further, we say that $S$ is \textit{quasi-convex} if it is $\sigma$-quasi-convex for some $\sigma >0$. \end{defn} 

\begin{cor}\label{newqc} Let $H$ be a finitely generated subgroup of an acylindrically hyperbolic group $G$. Then there exists a subset $X \subset G$ such that \begin{itemize} 
\item[(a)] $\Gamma(G, X)$ is hyperbolic, non-elementary and acylindrical 
\item[(b)] $H$ is quasi-convex in $\Gamma(G,X)$
\end{itemize} 
\end{cor}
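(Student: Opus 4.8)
The plan is to feed a well-chosen hyperbolically embedded subgroup into Theorem \ref{3 condns} and then read off the quasi-convexity of $H$ from the bottleneck property of quasi-trees (Lemma \ref{bpr}); part (d) of Theorem \ref{3 condns} is exactly what makes a generating set of $H$ available inside $X$, which is the link between the two.

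\emph{Setting up the data.} Since $G$ is acylindrically hyperbolic, by Theorem \ref{ah} it admits a non-elementary acylindrical action on a hyperbolic space, and hence contains a loxodromic element $g$; let $E=E(g)$ be its maximal elementary subgroup. By Lemma~6.5 of \cite{DGO}, $E$ is virtually cyclic, hence countable, and by Theorem~6.8 of \cite{DGO} we have $E\hookrightarrow_h(G,Z_0)$ for some $Z_0\subset G$. Let $S$ be a finite generating set of $H$ and put $Z=Z_0\cup S$; since $|Z\Delta Z_0|<\infty$, Lemma \ref{symmdiff} gives $E\hookrightarrow_h(G,Z)$. Now apply Theorem \ref{3 condns} to the one-element collection $\{E\}$ with $K=G$ (the hypothesis $E\le G$ is trivial). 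This produces $Y\subset G$ with $\{E\}\hookrightarrow_h(G,Y)$, with $\Gamma(G,Y\sqcup E)$ a quasi-tree on which $G$ acts acylindrically, and with $Z\cap G=Z\subseteq Y$; in particular $S\subseteq Y$. Set $X=Y\cup E\subseteq G$. Collapsing the (harmless) multiple edges and self-loops of $\Gamma(G,Y\sqcup E)$ arising from $Y\cap E\neq\emptyset$ and from $1\in E$ (cf. Remark \ref{bigons}) is a $G$-equivariant quasi-isometry onto $\Gamma(G,X)$, so $\Gamma(G,X)$ is again a quasi-tree, hence hyperbolic, and the $G$-action on it is acylindrical. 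Since $G$ is not virtually cyclic while $E$ is, $E$ is a non-degenerate hyperbolically embedded subgroup, so by Lemma~5.12 of \cite{ah} the action of $G$ on $\Gamma(G,X)$ is non-elementary. This gives part (a).

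\emph{Quasi-convexity of $H$.} The point is that $H$ is coarsely connected inside itself by unit steps in $\Gamma(G,X)$. Given $h_1,h_2\in H$, write $h_1^{-1}h_2$ as a word in $S$; since $S\subseteq X$, this word traces a path $\pi$ from $h_1$ to $h_2$ whose vertices all lie in $H$ and whose edges all have length one, so $\pi$ is contained in the $1$-neighbourhood of $H$. Let $\gamma$ be any geodesic in $\Gamma(G,X)$ from $h_1$ to $h_2$. Because $\Gamma(G,X)$ is a quasi-tree, Lemma \ref{bpr} supplies a constant $\mu>0$, depending only on $\Gamma(G,X)$, such that $\pi$ passes within $\mu$ of every point of $\gamma$; hence every point of $\gamma$ lies within $\mu+1$ of $H$. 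Thus $\gamma$ lies in the $(\mu+1)$-neighbourhood of $H$, so $H$ is $(\mu+1)$-quasi-convex in $\Gamma(G,X)$, proving part (b).

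The routine ingredients are the standard facts that an acylindrically hyperbolic group has a loxodromic element and that $E(g)$ is non-degenerate, together with the transfer of acylindricity and of the quasi-tree property along the edge-collapsing quasi-isometry $\Gamma(G,Y\sqcup E)\to\Gamma(G,X)$. The one genuinely non-formal step is the quasi-convexity argument: the bottleneck property of quasi-trees (Lemma \ref{bpr}) is what converts ``$H$ is coarsely self-connected by short jumps'' into honest quasi-convexity, and I expect this — rather than the application of Theorem \ref{3 condns} itself — to be the heart of the proof.
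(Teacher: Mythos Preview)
Your proof is correct and follows essentially the same strategy as the paper: obtain a non-elementary acylindrical action on a quasi-tree via Theorem~\ref{3 condns} applied to $E(g)$ with $K=G$, and then use the bottleneck property to convert coarse connectivity of $H$ into quasi-convexity. The paper packages the first step as Corollary~\ref{finally} and the second as a separate lemma (Lemma~\ref{inqtree}: coarsely connected subsets of quasi-trees are quasi-convex), whereas you invoke Lemma~\ref{bpr} directly; these are the same argument.

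The one genuine difference is your use of part~(d) of Theorem~\ref{3 condns}: you enlarge the relative generating set to $Z=Z_0\cup S$ beforehand so that $S\subseteq Y\subseteq X$, forcing the coarse-connectivity constant of $H$ to be~$1$. The paper does not do this; it simply takes whatever $X$ comes out of Corollary~\ref{finally} and observes that $H$ is $\epsilon$-coarsely connected for $\epsilon=\max_i d_X(1,x_i^{\pm1})$, which is finite since $H$ is finitely generated. Your detour through part~(d) is therefore unnecessary --- any finitely generated subgroup is automatically coarsely connected in any Cayley graph --- but it does no harm, and it is a nice illustration of what part~(d) is good for.
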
 

To prove the above corollary, we need the following two lemmas. 

\begin{lem}\label{intree} Let $T$ be a tree, and let $Q \subset T$ be $\epsilon$-coarsely connected. Then $Q$ is $\epsilon$-quasi-convex. \end{lem}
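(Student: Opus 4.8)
The plan is to exploit the characteristic feature of trees: the geodesic between two points must lie inside the image of \emph{any} path joining them. So I would start by fixing $x,y\in Q$ and using $\epsilon$-coarse connectivity to choose a chain $x=x_0,x_1,\dots,x_n=y$ of points of $Q$ with $d(x_i,x_{i+1})\le\epsilon$ for each $i$. Concatenating the unique geodesic segments $[x_i,x_{i+1}]$ yields a path $\gamma$ in $T$ from $x$ to $y$ whose image is $\bigcup_{i=0}^{n-1}[x_i,x_{i+1}]$.

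Next I would invoke the standard fact that in a tree the unique geodesic $[x,y]$ is contained in the image of $\gamma$ (equivalently, $[x,y]\subseteq\bigcup_i[x_i,x_{i+1}]$): a tree contains no embedded loop, so any path from $x$ to $y$ must, after cancellation, traverse the reduced edge path from $x$ to $y$, hence pass through every point of the geodesic. Consequently an arbitrary point $z\in[x,y]$ lies on some segment $[x_i,x_{i+1}]$. Since $z$ lies on the geodesic from $x_i$ to $x_{i+1}$, we get $d(x_i,z)\le d(x_i,x_{i+1})\le\epsilon$, and $x_i\in Q$, so $z\in Q^{+\epsilon}$. As $z$ was arbitrary, $[x,y]\subseteq Q^{+\epsilon}$, which is precisely $\epsilon$-quasi-convexity of $Q$.

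The only point that genuinely needs care — and the crux of the argument — is the claim that the geodesic lies in the image of the concatenated path; this is immediate for a simplicial tree with the combinatorial metric (reduced edge paths are unique) and equally for $\mathbb{R}$-trees, but it is worth recording which model of ``tree'' is intended, consistently with the rest of the paper. Everything else reduces to a single triangle-inequality estimate, so I do not expect any real difficulty beyond stating this fact cleanly.
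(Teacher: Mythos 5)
Your proposal is correct and follows essentially the same argument as the paper: build the concatenated path of geodesic segments $[x_i,x_{i+1}]$, observe that in a tree the geodesic $[x,y]$ must lie in the union of these segments, and conclude that each point of $[x,y]$ is within $\epsilon$ of some $x_i\in Q$. Your added remark justifying the containment (no embedded loops / uniqueness of reduced paths) is just a more explicit version of the paper's one-line appeal to $T$ being a tree.
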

\begin{proof} Let $\epsilon >0$ be the constant from Definition \ref{cc}. Let $x,y$ be two points in $Q$, and $p$ be any geodesic between them. Then there exist points  $x_0 =x, x_1, x_2,...,x_{n-1}, x_n =y$ in $Q$ such that  for all $i =0,...,n-1$, $d(x_i, x_{i+1}) \leq \epsilon.$ Let $p_i$ denote the geodesic segments between $x_i$ and $x_{i+1}$for all $i =0,1,...,n-1$. Since $T$ is a tree, we must have that $$p \subseteq \bigcup_{i=0}^{n-1} p_i.$$ By definition, for all $i=0,1,...,n-1$, $p_i \subseteq B(x_i, \epsilon)$, the ball of radius $\epsilon$ centered at $x_i$. Since $x_i \in Q$ for all $i=0,1,...,n-1$, we obtain $$p_i \subseteq Q^{+\epsilon}.$$ This implies $p \subseteq Q^{+\epsilon}$. \end{proof}

\begin{lem}\label{inqtree} Let $\Gamma$ be a quasi-tree, and $S \subset \Gamma$ be coarsely connected. Then $S$ is quasi-convex.\end{lem}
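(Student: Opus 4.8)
The plan is to transport the problem to a tree via the quasi-isometry, invoke Lemma \ref{intree} there, and pull the conclusion back. Since $\Gamma$ is a quasi-tree, fix a tree $T$ and a $(\lambda,C)$-quasi-isometry $q\colon \Gamma \to T$; write $d_\Gamma$ and $d_T$ for the two metrics. First I would observe that coarse connectivity passes to quasi-isometric images: if $S$ is $\epsilon$-coarsely connected, then for $x,y\in S$ a chain $x=x_0,x_1,\dots,x_m=y$ in $S$ with $d_\Gamma(x_j,x_{j+1})\le\epsilon$ maps to a chain $q(x_0),\dots,q(x_m)$ in $q(S)$ with $d_T(q(x_j),q(x_{j+1}))\le \lambda\epsilon+C$, so $q(S)$ is $\epsilon'$-coarsely connected with $\epsilon'=\lambda\epsilon+C$. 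By Lemma \ref{intree}, $q(S)$ is $\epsilon'$-quasi-convex in $T$.

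Next I would take $x,y\in S$ and a geodesic $p$ in $\Gamma$ between them, and let $\sigma_0$ be the (unique) geodesic in $T$ between $q(x)$ and $q(y)$. Since $p$ is an isometric embedding of an interval, $q\circ p$ is a $(\lambda,C)$-quasi-geodesic in $T$ with endpoints $q(x),q(y)$; as $T$ is $0$-hyperbolic, Lemma \ref{bottlenecklemma} provides a constant $R=R(0,\lambda,C)$ such that $q\circ p$ is contained in the $R$-neighborhood of $\sigma_0$. Because $q(x),q(y)\in q(S)$ and $q(S)$ is $\epsilon'$-quasi-convex, $\sigma_0\subseteq (q(S))^{+\epsilon'}$, hence every point of $q\circ p$ lies within $\epsilon'+R$ of $q(S)$.

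Finally I would pull this back to $\Gamma$. Let $z$ be any point of $p$; then $q(z)$ is within $\epsilon'+R$ of some $q(s)$ with $s\in S$, and the lower quasi-isometry inequality $\tfrac{1}{\lambda}d_\Gamma(z,s)-C\le d_T(q(z),q(s))$ gives $d_\Gamma(z,s)\le \lambda(\epsilon'+R+C)$. Thus $p\subseteq S^{+\sigma}$ with $\sigma=\lambda(\lambda\epsilon+2C+R)$, and since $x,y\in S$ and $p$ were arbitrary, $S$ is $\sigma$-quasi-convex. The only delicate point is the middle step: a geodesic of $\Gamma$ maps only to a quasi-geodesic of $T$, so one cannot apply Lemma \ref{intree} to the image of $p$ directly and must instead compare it with a genuine geodesic of $T$ via stability of quasi-geodesics in the ($0$-hyperbolic) tree; the rest is bookkeeping of constants.
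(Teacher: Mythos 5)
Your proof is correct and follows essentially the same route as the paper: transport the problem to a tree, apply Lemma \ref{intree} there, and compare with a genuine geodesic via Lemma \ref{bottlenecklemma}. The only (cosmetic) difference is that you run the quasi-isometry from $\Gamma$ to $T$, work with the image $q(S)$, and apply quasi-geodesic stability in the $0$-hyperbolic tree to $q\circ p$, whereas the paper maps $T\to\Gamma$, works with the preimage $q^{-1}(S)$, and applies stability in $\Gamma$ to the pushed-forward tree geodesic; the two arguments are mirror images and both are sound.
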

\begin{proof} Let $T$ be a tree, and $d_\Gamma$ and $d_T$ denote distances in $\Gamma$ and $T$ respectively. Let $\delta >0$ be the hyperbolicity constant of $\Gamma$. Let $q \colon T \rightarrow \Gamma$ be a $(\lambda, C)$-quasi-isometry. i.e., $$ -C + \frac{1}{\lambda} d_T(a,b) \leq d_\Gamma(q(a), q(b)) \leq \lambda d_T(a, b) + C.$$ Let $\epsilon >0$ be the constant from Definition \ref{cc} for S. Set $Q = q^{-1}(S)$. Then $Q \subset T$. It is easy to check that $Q$ is $\rho$-coarsely connected with constant $\rho = \lambda (\epsilon +C)$.  By Lemma \ref{intree}, $Q$ is $\rho$-quasi-convex. \\
Let $x,y$ be two points in $S$, and $p$ be a geodesic between them. Choose points $a, b$ in $Q$ such that $q(a) = x$ and $q(b) =y$. Let $r$ denote the (unique) geodesic in $T$ between $a$ and $b$. Since $Q$ is $\rho$-quasi-convex, we have $$r \subseteq Q^{+\rho}.$$ Set $\sigma = \lambda \rho +C$. Then $$q(r) \subseteq S^{+\sigma}.$$ Further $q \circ r$ is a quasi-geodesic between $x$ and $y$. By Lemma \ref{bottlenecklemma}, there exists a constant $R (=R(\lambda, C, \delta))$ such that $q(r)$ and $p$ are Hausdorff distance less than $R$ from each other. This implies that $p \subseteq S^{+(R + \sigma)}$. Thus $S$ is quasi-convex. \end{proof} 

\begin{proof}[Proof of Corollary \ref{newqc}]
By Corollary \ref{finally}, there exists a generating set $X$ of $G$ such that $\Gamma(G, X)$ is a quasi-tree (hence hyperbolic), and the action of $G$ on $\Gamma(G,X)$ is acylindrical and non-elementary. Let $d_X$ denote the metric on $\Gamma(G,X)$ induced by the generating set $X$. Let $H = \langle x_1, x_2,...,x_n \rangle$. Set $$\epsilon = max \{ d_X(1, x_i^{\pm 1}) \mid i =1,2,...,n\}.$$ We claim that $H$ is coarsely connected with constant $\epsilon$. Indeed if $u, v$ are elements of $H$, then $u^{-1}v = \prod_{j =1}^{k} w_j$, where $w_j \in \{ x_1^{\pm 1},..., x_n^{\pm 1} \}$. Set $$z_0 = u, z_1 = uw_1,...,z_{k-1} = uw_1w_2...w_{k-1}, z_k = v.$$ Clearly $z_i \in H$ for all $i=0,2,...,k-1$. Further $$d_X(z_i, z_{i+1}) = d_X(1, w_{i+1}) \leq \epsilon$$ for all $i=0,1,2,...,k-1$. By Lemma \ref{inqtree}, $H$ is quasi-convex in $\Gamma(G, X)$. \end{proof}

\vspace{10pt}
\textbf{Sahana Balasubramanya} \\
Department of Mathematics, Vanderbilt University\\
Nashville, TN -37240, U.S.A\\
Email : sahana.balasubramanya@vanderbilt.edu 

\begin{thebibliography}{99}
\bibitem{BHS} J. Behrstock, M. F. Hagen, A. Sisto;  Hierarchically hyperbolic spaces I: curve complexes for cubical groups,  arXiv:1412.2171v3.

\bibitem{BBF} M. Bestvina, K. Bromberg, K. Fujiwara; Constructing group actions on quasi-trees and applications to mapping class groups, arXiv:1006.1939v5.

\bibitem{bow}B. Bowditch; Intersection numbers and the hyperbolicity of the curve complex, J. Reine Angew. Math. 598 (2006), 105-129.

\bibitem{bottlenecklemma} M. R. Bridson and A. Haefliger; Metric Spaces of Non-Positive Curvature, volume 319 of Grundlehren der mathematischen Wissenschaften. Springer-Verlag, Berlin, 1999.

\bibitem{DGO} F. Dahmani, V. Guirardel, D. Osin; Hyperbolically embedded subgroups and rotating families in groups acting on hyperbolic spaces, Memoirs AMS, To appear.

\bibitem{Hagen} Mark F. Hagen; Weak hyperbolicity of cube complexes and quasi-arboreal groups, J. Topol. 7, no. 2 (2014), 385-418.

\bibitem{kapo} I. Kapovich, K. Rafi; On hyperbolicity of free splitting and free factor complexes, Groups, Geometry and Dynamics 8, no. 2 (2014), 391–414.

\bibitem{KK}  Sang-hyun Kim, T. Koberda; The geometry of the curve graph of a right-angled Artin group, Int. J. Algebra Comput. 24, no. 2 (2014), 121-169.

\bibitem{JFM}J. F. Manning; Geometry of pseudocharacters, Geometry and Topology 9 (2005) 1147-1185.

\bibitem{MO}A. Minasyan, D. Osin; Small subgroups of acylindrically hyperbolic groups, Preprint 2016.

\bibitem{ah}D. Osin; Acylindrically hyperbolic groups, Trans. Amer. Math. Soc. 368, no. 2 (2016), 851-888.

\bibitem{Osi15}D. Osin; On acylindrical hyperbolicity of groups with positive first $\ell^2$-Betti number, Bull. London Math. Soc. 47, no. 5 (2015), 725-730.

\bibitem{Rips} E. Rips; Subgroups of small cancellation groups, Bull. London Math. Soc. 14, no. 1(1982), 45-47.

\bibitem{sim}A. Sisto; On metric relative hyperbolicity, arXiv:1210.8081.

\end{thebibliography}
\end{document}